\newtheorem{theo}{Theorem}[section]
\newtheorem{lem} [theo]{Lemma}
\newtheorem{coro}[theo]{Corollary}
\newtheorem{prop}[theo]{Proposition}
\newtheorem{prob}[theo]{Problem}
\newtheorem{re}[theo]{Remark}
\newtheorem{conj}[theo]{Conjecture}
\makeatletter \@addtoreset{equation}{section}
\renewcommand{\arraystretch}{1.3}
\newcommand{\BPD}[2][1pc]{%
\setlength{\unitlength}{#1}
\def\BPDframe{%
    \thinlines%
    \color{lightgray}%
    \put(0,0){\line(0,1){1}}%
    \put(1,0){\line(0,1){1}}%
    \put(0,0){\line(1,0){1}}%
    \put(0,1){\line(1,0){1}}%
    \linethickness{0.08\unitlength}%
    \color{teal}}
\def\O{
\begin{picture}(1,1)
    \BPDframe
\end{picture}}
\def\F{
\begin{picture}(1,1)
    \BPDframe
    \qbezier(0.5,0)(0.5,0.2)(0.5,0.2)
    \qbezier(1,0.5)(0.8,0.5)(0.8,0.5)
    \qbezier(0.8,0.5)(0.5,0.5)(0.5,0.2)
\end{picture}}
\def\J{
\begin{picture}(1,1)
    \BPDframe
    \qbezier(0.5,1)(0.5,0.8)(0.5,0.8)
    \qbezier(0,0.5)(0.2,0.5)(0.2,0.5)
    \qbezier(0.5,0.8)(0.5,0.5)(0.2,0.5)
\end{picture}}
\def\Z{
\begin{picture}(1,1)
    \BPDframe
    \qbezier(0.5,0)(0.5,0.2)(0.5,0.2)
    \qbezier(0,0.5)(0.2,0.5)(0.2,0.5)
    \qbezier(0.2,0.5)(0.5,0.5)(0.5,0.2)
\end{picture}}
\def\L{
\begin{picture}(1,1)
    \BPDframe
    \qbezier(0.5,1)(0.5,0.8)(0.5,0.8)
    \qbezier(1,0.5)(0.8,0.5)(0.8,0.5)
    \qbezier(0.5,0.8)(0.5,0.5)(0.8,0.5)
\end{picture}}
\def\I{
\begin{picture}(1,1)
    \BPDframe
    \qbezier(0.5,0)(0.5,0.5)(0.5,1)
\end{picture}}
\def\H{
\begin{picture}(1,1)
    \BPDframe
    \qbezier(0,0.5)(0.5,0.5)(1,0.5)
\end{picture}}
\def\x{
\begin{picture}(1,1)
    \BPDframe
    \qbezier(0,0.5)(0.5,0.5)(1,0.5)
    \qbezier(0.5,0)(0.5,0.3)(0.5,0.3)
    \qbezier(0.5,1)(0.5,0.7)(0.5,0.7)
\end{picture}}
\def\X{
\begin{picture}(1,1)
    \BPDframe
    \qbezier(0.5,0)(0.5,0.5)(0.5,1)
    \qbezier(0,0.5)(0.5,0.5)(0.5,0.5)
    \qbezier(1,0.5)(0.5,0.5)(0.5,0.5)
\end{picture}}
\def\B{
\begin{picture}(1,1)
    \BPDframe
    \qbezier(0.5,0)(0.5,0.2)(0.5,0.2)
    \qbezier(1,0.5)(0.8,0.5)(0.8,0.5)
    \qbezier(0.8,0.5)(0.5,0.5)(0.5,0.2)
    \qbezier(0.5,1)(0.5,0.8)(0.5,0.8)
    \qbezier(0,0.5)(0.2,0.5)(0.2,0.5)
    \qbezier(0.5,0.8)(0.5,0.5)(0.2,0.5)
\end{picture}}
\def\b{
\begin{picture}(1,1)
    \BPDframe
    \qbezier(0.5,0)(0.5,0.2)(0.5,0.2)
    \qbezier(0,0.5)(0.2,0.5)(0.2,0.5)
    \qbezier(0.2,0.5)(0.5,0.5)(0.5,0.2)
    \qbezier(0.5,1)(0.5,0.8)(0.5,0.8)
    \qbezier(1,0.5)(0.8,0.5)(0.8,0.5)
    \qbezier(0.5,0.8)(0.5,0.5)(0.8,0.5) 
\end{picture}}
\def\C{
\begin{picture}(1,1)
    \BPDframe
    \qbezier(0,0)(0.5,0.5)(1,1)
    \qbezier(1,0)(0.5,0.5)(0,1)
\end{picture}}
\def\M##1{
\BPDframe
\begin{picture}(1,1)%
    \put(0,0.2){\makebox[\unitlength]{\(##1\)}}
\end{picture}}
\def\N##1{
\begin{picture}(1,1)%
    \put(0,0.2){\makebox[\unitlength]{\(##1\)}}
\end{picture}}
\def\t##1{\begin{picture}(1,1)%
    \BPDframe
    \put(0,0.28){\makebox[\unitlength]{\(##1\)}}
    \qbezier(0.5,0.5)(1,0.5)(1,0.5)
    \qbezier(0.5,0.5)(0.5,0)(0.5,0)
\end{picture}}
\def\tt##1{\begin{picture}(1,1)%
    \BPDframe
    \put(0,0.35){\makebox[\unitlength]{\(##1\)}}
    \qbezier(0.5,0.5)(1,0.5)(1,0.5)
    \qbezier(0.5,0.5)(0.5,0)(0.5,0)
\end{picture}}
\def\G{\begin{picture}(1,1)%
    \BPDframe
    \put(0.0,0.18){\colorbox{gray}{\begin{picture}(.65,.65)
    \end{picture}}}
\end{picture}}
\def\g{\begin{picture}(1,1)%
    \BPDframe
    \put(0.0,0.25){\colorbox{gray}{\begin{picture}(.5,.5)
    \end{picture}}}
\end{picture}}
\def\gg{\begin{picture}(1,1)%
    \BPDframe
    \put(0.0,0.22){\colorbox{gray}{\begin{picture}(.57,.57)
    \end{picture}}}
\end{picture}}
\def\R{\begin{picture}(1,1)%
    \BPDframe
    \put(0.0,0.22){\colorbox{green}{\begin{picture}(.57,.57)
    \end{picture}}}
\end{picture}}
\def\r{\begin{picture}(1,1)%
    \BPDframe
    \put(0.0,0.25){\colorbox{green}{\begin{picture}(.5,.5)
    \end{picture}}}
\end{picture}}
\def\IC{
\begin{picture}(1,1)
    \BPDframe
    \put(0.0,0.05){{\footnotesize \color{black} $\checkmark$}}
    \qbezier(0.5,0)(0.5,0.5)(0.5,1)
\end{picture}}
\def\ggC{\begin{picture}(1,1)%
    \BPDframe
    \put(0.0,0.22){\colorbox{gray}{\begin{picture}(.57,.57)
    \end{picture}}}
    \put(0.0,0.05){{\footnotesize \color{black} $\checkmark$}}
\end{picture}}
\def\XC{
\begin{picture}(1,1)
    \BPDframe
    \qbezier(0.5,0)(0.5,0.5)(0.5,1)
    \qbezier(0,0.5)(0.5,0.5)(0.5,0.5)
    \qbezier(1,0.5)(0.5,0.5)(0.5,0.5)
    \put(0.0,0.05){{\footnotesize \color{black} $\checkmark$}}
\end{picture}}
\def\HC{
\begin{picture}(1,1)
    \BPDframe
    \qbezier(0,0.5)(0.5,0.5)(1,0.5)
    \put(0.0,0.05){{\footnotesize \color{black} $\checkmark$}}
\end{picture}}
\def \T{\t{\bullet}}
\def \TT{\tt{\bullet}}
\def \TC{\put(0.0,0.05){{\footnotesize \color{black} $\checkmark$}}\t{\bullet}}
\begin{array}{@{\,}c@{\,}}{}
{\def\arraystretch{0}
\setlength{\arraycolsep}{0pc}
\color{teal}
\begin{array}{@{}l@{}}%
#2\end{array}}
\end{array}}
\def\S{  \mathfrak{S}}
\def\N{  \mathrm{N}}
\def\r2{r_2(i_b,i_c,j,j')}
\begin{document}
\begin{center}
{\Large\bf Schubert polynomials and patterns in permutations}

\vskip 6mm
{\small   }
Peter L. Guo  and Zhuowei Lin

\end{center}

\begin{abstract}
This paper investigates the number of supports of the Schubert polynomial $\mathfrak{S}_w(x)$ indexed by a permutation $w$. This number also equals the number of lattice points in the Newton polytope of $\mathfrak{S}_w(x)$. We  establish  a lower bound for this number in terms of the occurrences of patterns in $w$. The analysis is carried out in the general framework of   dual characters of  flagged Weyl modules. 
Our result considerably  improves  the bounds for principal specializations of Schubert polynomials or dual flagged Weyl characters  previously obtained by Weigandt, Gao, and  M{\'e}sz{\'a}ros--St. Dizier--Tanjaya.
Some problems and conjectures     are     discussed.

\end{abstract}

\vskip 3mm

\noindent {\bf Keywords:}  Schubert polynomial, key polynomial, flagged Weyl module, dual character, support, principal specialization

\vskip 3mm

\noindent {\bf AMS Classifications:} 05E10, 05E14, 05A19, 14N15

\newcommand*\cir[1]{\tikz[baseline=(char.base)]{
            \node[shape=circle,draw,inner sep=2pt] (char) {#1};}}

\section{Introduction}

As usual, let $S_n$ be the symmetric group of permutations of $[n]:=\{1,2,\ldots, n\}$. Gvien a permutation   $w\in S_n$, let $\mathfrak{S}_w(x)$ denote  the associated Schubert polynomial. They  were introduced by 
Lascoux  and Sch\"utzenberger \cite{schubert} to represent  Schubert classes in the cohomology ring of the flag manifold. Schubert polynomials   can be defined in a recursive procedure. For the longest permutation $w_0=n\cdots 2 1$, set $\mathfrak{S}_{w_0}(x)=x_1^{n-1}x_2^{n-2}\cdots x_{n-1}$. For $w\neq w_0$, locate a position  $1\leq i<n$ such that $w(i)<w(i+1)$, and   set $\mathfrak{S}_w(x)=\partial_i \mathfrak{S}_{ws_i}(x)$, where $ws_i$ is  obtained from $w$ by swapping $w(i)$ and $w(i+1)$, and $\partial_i$ is the divided difference operator acting on a polynomial $f(x)$ by 
\[
\partial_i f(x)=\frac{f(x)-f(x)|_{x_i\leftrightarrow x_{i+1}}}{x_i-x_{i+1}}.
\]

For a (weak) composition 
$\alpha=(\alpha_1,\ldots, \alpha_n)\in \mathbb{Z}_{\geq 0}^n$, write  $x^\alpha=x_1^{\alpha_1}\cdots x_n^{\alpha_n}$. Then one can express
\[
\mathfrak{S}_w(x)=\sum_{\alpha\in \mathbb{Z}_{\geq 0}^n} a_\alpha\, x^\alpha.
\]
It is famously known that $a_\alpha\in \mathbb{Z}_{\geq 0}$ \cite{Mac}. 
We say that  $\alpha$ is a support of $\mathfrak{S}_w(x)$ if $a_\alpha>0$. By the work of  Fink,  M{\'e}sz{\'a}ros and   St. Dizier \cite{2018Schubert} (first conjectured by Monical,  Tokcan and   Yong \cite{MTY}),   the supports of $\mathfrak{S}_w(x)$ are in one-to-one correspondence with  lattice points in its Newton polytope. Recall that the Newton polytope of a polynomial $f$ in $x_1,\ldots, x_n$ is the convex hull in $\mathbb{R}^n$ generated by the supports of $f$. 

 We use  $\theta_w$ to stand for the number of supports of $\mathfrak{S}_w(x)$, or equivalently, the number of lattice points in the Newton polytope of $\mathfrak{S}_w(x)$.  Given $u=u(1)\cdots u(m)\in S_m$ with $m\leq n$, we say that a subsequence $w(i_1)\cdots w(i_m)$    of $w$ is  a $u$ pattern if  $w(i_1)\cdots w(i_m)$ has the same relative order as $u$.
Let $p_u(w)$ denote the number of appearances of  $u$ patterns  in $w$. For example, we have $p_{132}(1432)=3$. 

Our first main result is an attempt to  establish a lower bound for $\theta_w$ in terms of  the numbers $p_u(w)$.

\begin{theo}\label{schres}
For $w\in S_n$, we have 
\begin{align}\label{mkod-1}
        \theta_w&\ge  1+p_{132}(w)+p_{1432}(w)+p_{13254}(w)+3p_{14253}(w)
        \nonumber\\
        &\ \ \ \ +p_{14352}(w)+4
p_{15243}(w)+p_{15324}(w)+2p_{15342}(w)
        \\
        &\ \ \ \ +p_{15432}(w)+p_{24153}(w)+2p_{25143}(w)+p_{35142}(w).\nonumber
    \end{align}
        
\end{theo}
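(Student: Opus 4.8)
The plan is to work in the general setting of dual characters of flagged Weyl modules, since Schubert polynomials are the special case attached to the diagram $D(w)$ of a permutation. The key observation is that $\theta_w$ counts lattice points in the Newton polytope $\mathcal{N}(\mathfrak{S}_w)$, and this polytope is known (by Fink--M\'esz\'aros--St.\ Dizier) to be a generalized permutohedron; in particular it has a combinatorial description via the matroid / flagged structure. So the first step is to set up the framework: given the diagram of $w$, describe a family of explicit lattice points in $\mathcal{N}(\mathfrak{S}_w)$ and show they are pairwise distinct. The natural way to produce such points is through the RC-graph / pipe dream model or, equivalently, through the flagged tableaux describing the dual flagged Weyl character — each combinatorial object contributes a monomial, hence a support vector.

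Next I would set up an injection from a disjoint union of sets of pattern-occurrences into the set of supports. Concretely, for each pattern $u$ appearing in the list (with its prescribed multiplicity coefficient $c_u$), and for each occurrence of $u$ in $w$ at positions $i_1<\dots<i_m$, I would associate $c_u$ distinct ``local moves'' on a fixed base object (say the bottom pipe dream giving the leading monomial, or the superstandard flagged tableau). Each local move is a ladder move or a toggle supported on the cells of $D(w)$ determined by the pattern occurrence; applying it changes the weight vector by a small, combinatorially controlled perturbation. The heart of the argument is to show: (a) each such move indeed lands inside $\mathcal{N}(\mathfrak{S}_w)$ — this should follow from the fact that ladder moves preserve membership in the Schubert polytope, or by exhibiting an actual pipe dream realizing the new monomial; and (b) moves coming from distinct pattern-occurrences, and the $c_u$ moves within a single occurrence, produce distinct lattice points. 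Part (b) is the delicate bookkeeping: one wants the ``footprint'' (the set of coordinates changed, together with the signs/sizes of the changes) to determine the originating pattern-occurrence uniquely, so that the union over all occurrences is genuinely a disjoint union contributing $\sum_u c_u\, p_u(w)$ new points, plus the base point itself giving the $+1$.

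The main obstacle I anticipate is exactly the disjointness/injectivity analysis in the presence of overlapping pattern occurrences. When several of the listed patterns (e.g.\ $132$, $1432$, $15243$, $14253$) occur on overlapping sets of positions, the associated local perturbations can interfere, and one must either (i) choose a canonical/greedy order in which to apply moves so that the cumulative weight vector still encodes which occurrences were used, or (ii) prove a structural lemma saying that the set of supports of $\mathfrak{S}_w$ is ``large enough'' to absorb all these contributions independently — for instance by showing the relevant perturbation vectors are linearly independent or lie in disjoint coordinate blocks after a suitable normalization. I expect the cleanest route is to verify the inequality first for the minimal ``pattern-defining'' permutations themselves (the $13$ permutations listed, where one checks $\theta_u$ equals exactly $1+c_u$ or uses a direct computation), and then to propagate to general $w$ by a monotonicity argument: adding a letter to $w$ (or, dually, restricting to a pattern occurrence via a projection of Newton polytopes) can only increase the number of supports, and the increments are additive over disjoint occurrences. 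Making this propagation rigorous — tracking how $\mathcal{N}(\mathfrak{S}_w)$ behaves under the natural embeddings $S_m \hookrightarrow S_n$ and under pattern inclusion — is where the real work lies, and it is likely handled through the flagged Weyl module machinery rather than through the polynomials directly.
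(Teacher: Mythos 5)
Your proposal identifies the right ambient framework (dual characters of flagged Weyl modules, with $\theta_w$ counting supports $=$ lattice points), and the high-level goal of an injection from pattern occurrences into supports is the right one. But the concrete mechanism you sketch diverges from what actually works, in two places that are genuine gaps rather than omitted details.

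First, you propose to realize new supports via pipe dreams / RC-graphs and ladder moves or toggles. That is the mechanism of Weigandt and Gao, and it bounds $\nu_w=\mathfrak{S}_w(1,\ldots,1)$, the \emph{number of pipe dreams}. It does not directly bound $\theta_w$: distinct pipe dreams can carry the same exponent vector, so exhibiting many pipe dreams does not exhibit many supports. The paper sidesteps pipe dreams entirely and works instead with the Gale-order characterization (Proposition~\ref{gapnj-1}): the supports of $\chi_D$ are exactly $\{\mathrm{wt}(C)\colon C\le D\}$. The engine of the proof is three explicit algorithms that, starting from $D$, move boxes up their columns to produce diagrams $C<D$, with a direct argument that the resulting weight vectors are pairwise distinct and distinct across the three algorithms. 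Nothing in this is a ``perturbation of a base tableau''; the injectivity is established by comparing row counts of the constructed diagrams, not by footprints of local moves.

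Second, and more structurally, you are missing the key intermediate reduction. The paper does not go from pattern occurrences to supports in one step. It first proves a purely diagrammatic bound (Theorem~\ref{res1}): $\theta_D\ge 1+r_1(D)+r_2(D)+r_3(D)$, where $r_1,r_2,r_3$ count subdiagrams of $D$ matching the small configurations (A), (B)/(B'), (C)/(C')/(C''). Only afterward, for $D=D(w)$, does it relate pattern counts to subdiagram counts: for each $u$ in the list and each $u$-occurrence in $w$ it explicitly names $a_u$ subdiagrams of $D(w)$ (Tables~\ref{tab:my_label-1}, \ref{table-09}, \ref{tab:my_label-98}) and shows they are all distinct because each such subdiagram determines the originating pattern occurrence. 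That reconstruction lemma is precisely what resolves the overlapping-occurrence worry you raise; your fallback — check the inequality on the $13$ pattern permutations and propagate by monotonicity and additivity over disjoint occurrences — does not address it, since the occurrences are typically not disjoint, and it is not established (nor obviously true) that $\theta$ is additive or even monotone in the sense needed under pattern embeddings $S_m\hookrightarrow S_n$. So the proposal as written would stall exactly at the injectivity/disjointness step you flag as the main obstacle, and the missing idea is the two-stage reduction through the configuration counts $r_1,r_2,r_3$ combined with the subdiagram-to-pattern reconstruction argument.
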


As comparison,   the principal specialization $\nu_w:=\mathfrak{S}_w(x)|_{x_i=1}$ of $\mathfrak{S}_w(x)$ has received much attention in recent years. By definition, it is clear that $\theta_w\leq \nu_w$. The equality holds if and only if $\mathfrak{S}_w(x)$ is  zero-one, that is,  each coefficient $a_\alpha$ is equal to either 0 or 1. A criterion for zero-one Schubert polynomials was first given  by  Fink,  M{\'e}sz{\'a}ros and   St. Dizier \cite{fink2021zero}.

A classical formula due to Macdonald \cite{Mac}, see also \cite{FS,HPSW},  states   that 
\[
\nu_w=\frac{1}{\ell !}\sum_{(a_1,\ldots, a_\ell)\in \mathrm{Red}(w)} a_1\cdots a_\ell,
\]
where $\ell$ is the length of $w$, and  the sum runs over reduced words of $w$. It is well known that $\nu_w=1$ if and only if $w$ is dominant, that is, $w$ has no 132 pattern.   Stanley \cite{stanley} conjectured that $\nu_w=2$ if and only if $w$ has exactly one  132 pattern. This conjecture was confirmed  by  Weigandt \cite{Anna} by proving  a lower bound 
\begin{equation}\label{cbuAW}
\nu_w\geq 1+p_{132}(w).
\end{equation} 
This bound was later  strengthened by Gao  \cite[Theorem 2.1]{gao}, where it was shown that 
\begin{equation}\label{GAO-b}
    \nu_w\geq 1+p_{132}(w)+p_{1432}(w).
\end{equation}
Both proofs in \cite{Anna,gao} make use of the pipe dream model of Schubert polynomials. 

Because of  $\theta_w\leq \nu_w$, Theorem \ref{schres} immediately yields a lower bound for $\nu_w$ which largely improves the bound in  \eqref{GAO-b}.  

\begin{coro}
For $w\in S_n$, $\nu_w$ is  bounded below by the right-hand side of    \eqref{mkod-1}.
\end{coro}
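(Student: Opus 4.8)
The plan is a one-step deduction from Theorem~\ref{schres}, using only the elementary inequality $\theta_w\le\nu_w$ already recorded in the introduction. First I would recall that $\nu_w=\mathfrak{S}_w(x)|_{x_i=1}$ is the value obtained by setting every variable to $1$, so
\[
\nu_w=\sum_{\alpha\in\mathbb{Z}_{\geq 0}^n}a_\alpha,
\]
the total sum of the coefficients of $\mathfrak{S}_w(x)$. By Macdonald's nonnegativity theorem \cite{Mac}, each $a_\alpha$ is a nonnegative integer, so every $\alpha$ with $a_\alpha>0$ contributes at least $1$ to the sum above; hence
\[
\nu_w\ \geq\ \#\{\alpha:a_\alpha>0\}\ =\ \theta_w.
\]
Combining this with the bound $\theta_w\geq R(w)$ furnished by Theorem~\ref{schres}, where $R(w)$ denotes the right-hand side of \eqref{mkod-1}, yields $\nu_w\geq\theta_w\geq R(w)$, which is exactly the assertion of the corollary.

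There is no real obstacle in this argument: the substance lies entirely in Theorem~\ref{schres}, and the transition from $\theta_w$ to $\nu_w$ is immediate once one observes that the principal specialization counts the coefficients of $\mathfrak{S}_w(x)$ with multiplicity whereas $\theta_w$ counts only their supports. For context, the resulting bound dominates the bound \eqref{GAO-b} of Gao term by term, since the right-hand side of \eqref{mkod-1} contains $1+p_{132}(w)+p_{1432}(w)$ together with further nonnegative pattern-count contributions. One could optionally remark that equality $\nu_w=\theta_w$ holds precisely when $\mathfrak{S}_w(x)$ is zero-one, a condition characterized by Fink, M\'esz\'aros and St.\ Dizier \cite{fink2021zero}, but this refinement is not needed for the stated lower bound.
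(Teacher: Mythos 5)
Your argument is correct and is exactly the paper's: the corollary follows immediately from Theorem~\ref{schres} together with the inequality $\theta_w\leq\nu_w$, which holds because the coefficients $a_\alpha$ are nonnegative integers. The additional remarks about zero-one Schubert polynomials and the comparison with \eqref{GAO-b} match the surrounding discussion in the introduction and are not needed for the proof itself.
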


We deal with Theorem \ref{schres} in the general setting of dual characters of flagged Weyl modules associated to diagrams in the square grid $[n]\times [n]$.  
A diagram $D$ means a subset of boxes in $[n]\times [n]$.  The associated flagged Weyl module $\mathcal{M}_{D}$ is a representation of the Borel group $B$ of invertible upper-triangular complex  matrices \cite{KP-1,KP-2,magyar1998schubert}. 
Let $\chi_D(x)=\chi_D(x_1,\ldots, x_n)$ denote the dual character of  $\mathcal{M}_{D}$. As will be explained in Section \ref{section-21}, $\chi_D(x)$ specializes  to a Schubert polynomial (resp., key polynomial)  when $D$ is the Rothe diagram  of a permutation (resp., skyline diagram of a composition). 

We use $\theta_D$ to represent the number  of supports of $\chi_D(x)$, which also equals the number of lattice points in the Newton polytope, called Schubitope,  of $\chi_D(x)$ \cite{2018Schubert}. 
We deduce   a lower bound for $\theta_D$ by using the appearances of certain subdiagrams of $D$. Precisely, consider the configurations in Figure \ref{config}, where a blank/shaded box means the absence/presence. 
\vspace{0.05cm}
\begin{figure}[h]
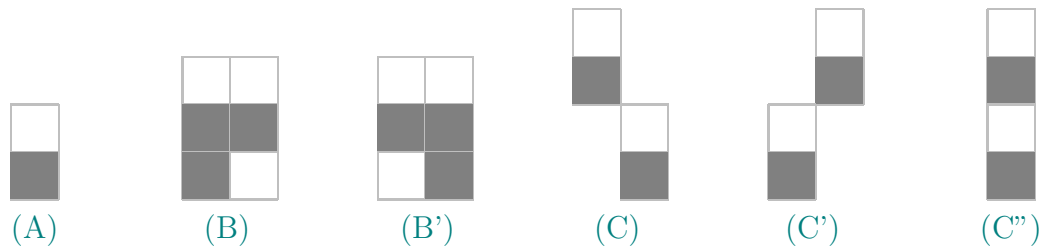

\vspace{-0.4cm}
  \centering
  ~~~~~~~~
    \begin{minipage}[b]{0.1\textwidth}
    $$\BPD[1.5pc]{\O\\
    \G\\
    \\
    \N{\text{(A)}}\\}
    $$
    \label{figA}
  \end{minipage}
  \hfill
  \begin{minipage}[b]{0.1\textwidth}
    $$\BPD[1.5pc]{
    \O\O\\
    \G\G\\
    \G\O\\
    \\
    \N{~~~~\text{(B)}}\\}
    $$
    \label{figA}
  \end{minipage}
  \hfill
      \begin{minipage}[b]{0.1\textwidth}
    $$\BPD[1.5pc]{\O\O\\
    \G\G\\
    \O\G\\
    \N{~~~~~\text{(B')}}\\}
    $$
    \label{figB'}
  \end{minipage}
  \hfill
  \begin{minipage}[b]{0.1\textwidth}
    $$\BPD[1.5pc]{
    \O\N{}\\
    \G\N{}\\
    \N{}\O\\
    \N{}\G\\
    \N{~~~~\text{(C)}}\\}$$
    \label{figB}
  \end{minipage}
  \hfill
    \begin{minipage}[b]{0.1\textwidth}
    $$\BPD[1.5pc]{
    \N{}\O\\
    \N{}\G\\
    \O\N{}\\
    \G\N{}\\
    \N{~~~~\text{(C')}}\\}$$
    \label{figB}
  \end{minipage}
  \hfill
  \begin{minipage}[b]{0.1\textwidth}
    $$\BPD[1.5pc]{
    \O\\
    \G\\
    \O\\
    \G\\
\N{\text{(C'')}}\\}$$
      \label{fig C}
  \end{minipage}
\vspace{-10pt}
  \caption{Configurations for Theorem \ref{res1}.}
    \label{config}
\end{figure}
 Let $(i,j)$   denote the box of $[n]\times [n]$ in row $i$ and column $j$ in matrix coordinate.  Define
\begin{itemize}
\item $r_1(D)$: the number of subdiagrams of $D$ which are equal to the configuration (A) in Figure \ref{config};

\item $r_2(D)$: the number of subdiagrams of $D$ which are equal to the configuration (B), or (B') in Figure \ref{config};

\item $r_3(D)$: the number of subdiagrams of $D$ which are equal to the configuration (C), or (C'), or (C'') in Figure \ref{config};

\end{itemize}

To avoid confusion, we explain the above notions  in more details.  For example, a subdiagram of $D$ which is equal to the configuration  (C) in Figure \ref{config} means a subset 
\[
\{(i_1, j_1), (i_2, j_1), (i_3, j_2), (i_4, j_2)\}
\]
of boxes in $[n]\times [n]$ such that (1) $i_1<i_2<i_3<i_4$ and $j_1<j_2$, and (2) exactly two of the boxes, $(i_2, j_1)$ and $(i_4, j_2)$,  belong  to $D$.    

The statistic $r_1(D)$ has be investigated   by M{\'e}sz{\'a}ros,  St. Dizier and  Tanjaya \cite{meszaros2021principal}, where it is called the rank of $D$ and is denoted $\mathrm{rank}(D)$.   Let $\chi_D(1,\ldots,1) :=\chi_D(x)|_{x_i=1}$ be the principal specialization of $\chi_D(x)$. Note that    $\chi_D(1,\ldots,1)\geq \theta_D$. A criterion for the equality was conjectured in  \cite{meszaros2021principal}  and proved recently  in  \cite{GLP}. As  shown in \cite[Theorem 2]{meszaros2021principal},  $\chi_D(1,\ldots,1)$ is bounded below by $1+r_1(D)$, which recovers  the bound in \eqref{cbuAW} by Weigandt \cite{Anna}  when $D$ is the Rothe diagram of a permutation.

We prove the following lower bound for $\theta_D$.

\begin{theo}\label{res1}
For any diagram $D$, we have
\begin{equation}\label{bnkat-1}
\theta_D\ge 1+r_1(D)+r_2(D)+r_3(D).   
\end{equation}
\end{theo}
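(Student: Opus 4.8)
The plan is to realize the supports of $\chi_D(x)$ as the lattice points of the Schubitope $\mathcal{S}_D=\mathrm{Newton}(\chi_D)$ (Monical--Tokcan--Yong, Fink--M\'esz\'aros--St.~Dizier) and to build an explicit injection
$$
\Psi\colon\ \{\ast\}\ \sqcup\ \mathcal{C}_1(D)\ \sqcup\ \mathcal{C}_2(D)\ \sqcup\ \mathcal{C}_3(D)\ \longrightarrow\ \mathcal{S}_D\cap\mathbb{Z}^n ,
$$
where $\mathcal{C}_1(D),\mathcal{C}_2(D),\mathcal{C}_3(D)$ are the sets of subdiagrams counted by $r_1(D),r_2(D),r_3(D)$, and $\ast$ is sent to a fixed base support. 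Since $|\mathcal{C}_i(D)|=r_i(D)$, an injection immediately yields $\theta_D\ge 1+r_1(D)+r_2(D)+r_3(D)$.

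For the base support I would take the row-weight composition $\beta_0=\beta_0(D)$ with $(\beta_0)_i=\#\{j:(i,j)\in D\}$ (the Lehmer code when $D=D(w)$); this is always a support --- it is the weight of the tableau placing every box of $D$ at its own row, and it is the dominance-minimal support of $\chi_D$. All other images of $\Psi$ are obtained from $\beta_0$ by \emph{upward lifts}: moving boxes of a single column into empty cells higher up in that same column. The key lemma is that the particular lifted compositions used below still lie in $\mathcal{S}_D$; this is checked against the defining inequalities of the Schubitope (equivalently, it follows from the fact that pushing a box of $D$ up past an empty cell in its column is a support-preserving operation for dual characters of flagged Weyl modules). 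The role of the configurations (A), (B)/(B$'$), (C)/(C$'$)/(C$''$) is precisely to certify that one more such lift is available, beyond those already certified by the smaller sub-configurations they contain.

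To define $\Psi$: to an occurrence of (A), an empty cell $(i_1,j)$ lying above a box $(i_2,j)$, I assign the composition obtained from $\beta_0$ by lifting the box $(i_2,j)$ up to row $i_1$ and, \emph{simultaneously}, for every column $j'>j$ that also has a box in row $i_2$, lifting that box to the topmost empty cell of its column; this second, ``horizontal'' part of the rule is what forces the target to remember the column $j$, and is what resolves the collisions that a single vertical move would otherwise produce (two occurrences of (A) in different columns that each move one unit out of the same row). Occurrences of (B)/(B$'$) and (C)/(C$'$)/(C$''$) are assigned the analogous but larger lift performed across the two columns of the configuration --- essentially, compacting those two columns upward as far as the configuration permits --- which by design deposits a unit of weight in a strictly higher row than any type-(A) image built from the sub-configurations inside it. Injectivity is then proved by reconstruction: from $\Psi(c)$ one reads off the multiset of rows that lost weight and the amounts, the ``horizontal reach'' recorded by the type-(A) rule, and a ``second-order'' displacement distinguishing the type-(B)/(C) images; this is enough to recover the type of $c$, and then $c$ itself.

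The main obstacle is exactly this simultaneous injectivity: a priori, many configurations of possibly different shapes can perturb $\beta_0$ to the same lattice point --- this already happens for two occurrences of (A) in a two-column diagram, e.g.\ the Rothe diagram of $1432$, whose three occurrences of (A) want only two distinct single-step targets. Designing the lift rule so that every target carries enough rigid combinatorial data to pin down the configuration that produced it, while still provably satisfying every Schubitope inequality, is the crux; I expect it to require a case analysis organized by the relative order of the two columns and four rows involved, together with a verification via the explicit column-by-column description of $\mathcal{S}_D$. A secondary task, needed to deduce Theorem~\ref{schres} from Theorem~\ref{res1}, is the finite but intricate combinatorial check that in the Rothe diagram $D(w)$ each pattern $u$ occurring in Theorem~\ref{schres} contributes exactly the stated number of subdiagrams of shape (A)/(B)/(C), which is what accounts for the coefficients $3p_{14253}(w)$, $4p_{15243}(w)$, and so on.
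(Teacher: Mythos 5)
Your overall strategy --- count lattice points of the Schubitope, or equivalently weight vectors $\mathrm{wt}(C)$ for diagrams $C\leq D$ (this is Proposition~\ref{gapnj-1}), and exhibit $1+r_1+r_2+r_3$ distinct such weights by lifting boxes of $D$ upward within their columns --- is the same framework the paper uses. The gap lies in how the distinct weights are produced. You propose a per-configuration injection $\Psi$, but the rules of $\Psi$ are only partially specified and its injectivity is not proved. Concretely: the type-(A) rule ``lift $(i_2,j')$ to the topmost empty cell of its column'' is undefined when column $j'$ has no empty cell above row $i_2$; the rule for configurations (B), (B$'$), (C), (C$'$), (C$''$) is described only as ``compacting those two columns upward as far as the configuration permits'', with no explicit construction; the assertion that a type-(B)/(C) image deposits a box in a strictly higher row than any type-(A) image built from its sub-configurations is not argued; and the joint injectivity of $\Psi$ across all three types, which you correctly identify as the crux, is left as a planned case analysis rather than carried out.

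The paper deliberately avoids a per-configuration map. For the $r_1$ weights it builds a single chain $C^1<\cdots<C^{r_1}<D$ by repeatedly moving the top-left-most box having a blank above it up one cell, so distinctness is automatic because strictly comparable diagrams have distinct weights. For the $r_2$ and $r_3$ weights it uses iterative constructions --- one grouped by pairs of rows $(i_1,i_2)$, the other running the first algorithm as a subroutine on the truncated diagram $D_{>i}$ --- and then proves pairwise distinctness of all $1+r_1+r_2+r_3$ weights in a sequence of propositions by tracking box counts in carefully chosen rows. Not parametrizing the construction by individual configurations is precisely what keeps those distinctness arguments tractable. As it stands, your proposal identifies the right reduction but leaves the central injectivity lemma, together with the exact definition of $\Psi$ on types (B) and (C), as a genuine gap.
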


Theorem \ref{res1}    leads to a  strengthen of the above mentioned  bound for $\chi_D(1,\ldots,1)$ by M{\'e}sz{\'a}ros,  St. Dizier and  Tanjaya \cite[Theorem 2]{meszaros2021principal}.

\begin{coro}\label{res1c}
    For any diagram $D$, $\chi_D(1,\ldots,1)$ is bounded below by the right-hand side of \eqref{bnkat-1}.
\end{coro}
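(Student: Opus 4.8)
The plan is to obtain Corollary~\ref{res1c} directly from Theorem~\ref{res1} by comparing the principal specialization with the number of supports. Write $\chi_D(x)=\sum_{\alpha\in\mathbb{Z}_{\geq 0}^n} c_\alpha\, x^\alpha$. Since $\chi_D(x)$ is the dual character of the flagged Weyl module $\mathcal{M}_D$, all the coefficients $c_\alpha$ are nonnegative integers; this is the only structural input needed beyond Theorem~\ref{res1}, and it is precisely the property underlying the Newton-polytope/Schubitope description of the support. Setting every variable equal to $1$ sums all coefficients, so
\[
\chi_D(1,\ldots,1)=\sum_{\alpha} c_\alpha \;\geq\; \#\{\alpha:\ c_\alpha>0\}=\theta_D,
\]
each nonzero coefficient contributing at least $1$ to the sum. (This is the inequality $\chi_D(1,\ldots,1)\geq\theta_D$ already recorded in the text.)

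Combining this with the bound $\theta_D\geq 1+r_1(D)+r_2(D)+r_3(D)$ of Theorem~\ref{res1} yields $\chi_D(1,\ldots,1)\geq 1+r_1(D)+r_2(D)+r_3(D)$, which is the claim. There is no genuine obstacle: all the combinatorial content lives in Theorem~\ref{res1}, and the corollary is a one-line consequence of it together with nonnegativity of the coefficients of $\chi_D$. In particular, since $r_2(D),r_3(D)\geq 0$, this simultaneously recovers and strengthens the bound $\chi_D(1,\ldots,1)\geq 1+r_1(D)$ of M{\'e}sz{\'a}ros, St.~Dizier and Tanjaya \cite{meszaros2021principal}, and specializing $D$ to the Rothe diagram of a permutation recovers the corresponding statement for principal specializations of Schubert polynomials.
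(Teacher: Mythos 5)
Your proof is correct and follows exactly the paper's (implicit) argument: $\chi_D(1,\ldots,1)\geq\theta_D$ by nonnegativity of the coefficients of the dual character, and combining this with Theorem~\ref{res1} gives the corollary immediately. Nothing more is needed.
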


When   restricting to the skyline diagram $D(\alpha)$  of a composition $\alpha\in \mathbb{Z}_{\geq 0}^n$,  Theorem \ref{res1}  yields  a lower bound for the number of supports of the key polynomial $\kappa_\alpha(x)$.  Key polynomials,  also called the Demazure characters, are the characters of a Demazure modules
 for the general linear group  \cite{D-1,D-2}. 
Key polynomials can also be defined in a recursive manner.  
If $\alpha=(\alpha_1\geq \cdots  \geq \alpha_n)$ is weakly decreasing, then set $\kappa_\alpha(x)=x^\alpha$. Otherwise, choose $1\leq i<n$ such
 that $\alpha_i<\alpha_{i+1}$, and set $\kappa_\alpha(x)=\partial_i\,x_i\,\kappa_{s_i\alpha}(x)$, where $s_i\alpha$ is obtained  from $\alpha$ by swapping 
 $\alpha_i$ and $\alpha_{i+1}$.

\begin{theo}\label{forkey}
For any (weak) composition $\alpha$,   we have
\begin{align*}
\kappa_\alpha(1,\ldots, 1) \ge \theta_{D(\alpha)}\ge & 1+ \sum_{\mathrm{inv}_1(\alpha)}(\alpha_{i_2}-\alpha_{i_1})+\sum_{\mathrm{inv}_2(\alpha)}(\alpha_{i_2}-\alpha_{i_3})\cdot(\alpha_{i_3}-\alpha_{i_1})\\[5pt]
   &
\ +\sum_{\mathrm{inv}_{3}(\alpha)}(\alpha_{i_2}-\alpha_{i_1})\cdot(\alpha_{i_4}-\alpha_{i_3}),
    \end{align*}
where $\mathrm{inv}_1(\alpha)=\{(i_1,i_2)\colon i_1<i_2,\ \alpha_{i_1}<\alpha_{i_2}\}$, $\mathrm{inv}_2(\alpha)=\{(i_1, i_2, i_3)\colon i_1<i_2<i_3, \ \alpha_{i_1}<\alpha_{i_3}<\alpha_{i_2}\}$, and $\mathrm{inv}_{3}(\alpha)=\{(i_1, i_2, i_3, i_4)\colon i_1<i_2<i_3<i_4,\ \alpha_{i_1}<\alpha_{i_2},\ \alpha_{i_3}<\alpha_{i_4}\}$.
\end{theo}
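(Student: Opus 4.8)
The plan is to deduce this from Theorem \ref{res1} by understanding how the statistics $r_1,r_2,r_3$ specialize when $D$ is the skyline (Rothe-type) diagram $D(\alpha)$ of a composition. First I would recall the explicit shape of $D(\alpha)$: in row $i$ it consists of the boxes $(i,1),(i,2),\ldots,(i,\alpha_i)$ — that is, $D(\alpha)$ is a "left-justified" diagram whose $i$-th row is a length-$\alpha_i$ prefix. The first inequality $\kappa_\alpha(1,\ldots,1)\ge\theta_{D(\alpha)}$ is immediate from the general fact $\chi_D(1,\ldots,1)\ge\theta_D$ recalled in the text, together with the identification $\chi_{D(\alpha)}(x)=\kappa_\alpha(x)$ from Section \ref{section-21}. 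So the real content is the second inequality, which by Theorem \ref{res1} will follow once I show
\[
r_1(D(\alpha))=\sum_{\mathrm{inv}_1(\alpha)}(\alpha_{i_2}-\alpha_{i_1}),\qquad
r_2(D(\alpha))=\sum_{\mathrm{inv}_2(\alpha)}(\alpha_{i_2}-\alpha_{i_3})(\alpha_{i_3}-\alpha_{i_1}),
\]
\[
r_3(D(\alpha))=\sum_{\mathrm{inv}_3(\alpha)}(\alpha_{i_2}-\alpha_{i_1})(\alpha_{i_4}-\alpha_{i_3}),
\]
and in fact it suffices to prove "$\ge$" for each, which is all the bound needs (though equality should hold).

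The key steps are three direct combinatorial counts. For $r_1$: a copy of configuration (A) is a pair of boxes $(i_1,j),(i_2,j)$ with $i_1<i_2$, where $(i_2,j)\in D(\alpha)$ but $(i_1,j)\notin D(\alpha)$; since rows are prefixes, this means $\alpha_{i_1}<j\le\alpha_{i_2}$, hence $\alpha_{i_1}<\alpha_{i_2}$ and the number of valid $j$ is exactly $\alpha_{i_2}-\alpha_{i_1}$ — summing over $i_1<i_2$ gives the $\mathrm{inv}_1$ sum. For $r_3$: configuration (C) (and its variants (C'), (C'')) picks rows $i_1<i_2<i_3<i_4$ and columns $j_1\le j_2$ with $(i_2,j_1),(i_4,j_2)\in D$ and $(i_1,j_1),(i_3,j_2)\notin D$; the column conditions split as $\alpha_{i_1}<j_1\le\alpha_{i_2}$ and $\alpha_{i_3}<j_2\le\alpha_{i_4}$, forcing $\alpha_{i_1}<\alpha_{i_2}$, $\alpha_{i_3}<\alpha_{i_4}$, with $(\alpha_{i_2}-\alpha_{i_1})(\alpha_{i_4}-\alpha_{i_3})$ choices — here one must check that the three sub-configurations (C), (C'), (C'') together correctly tally the cases $j_1<j_2$, $j_1<j_2$ with roles of column-pairs swapped, and $j_1=j_2$, and that no double counting occurs, which is the place a little care is needed. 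For $r_2$: configuration (B)/(B') on rows $i_1<i_2<i_3$ and columns $j_1<j_2$ requires $(i_2,j_1),(i_2,j_2),(i_3,j_1)\in D$ and $(i_1,j_1),(i_1,j_2),(i_3,j_2)\notin D$ for (B), with (B') having $(i_3,j_2)\in D$, $(i_3,j_1)\notin D$ instead; translating into prefix conditions gives $\alpha_{i_1}<j_1<j_2\le\alpha_{i_2}$ and $j_1\le\alpha_{i_3}<j_2$, so $\alpha_{i_1}<\alpha_{i_3}<\alpha_{i_2}$ and the count of $(j_1,j_2)$ is $(\alpha_{i_3}-\alpha_{i_1})(\alpha_{i_2}-\alpha_{i_3})$, matching the $\mathrm{inv}_2$ sum.

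The main obstacle I anticipate is purely bookkeeping rather than conceptual: making sure the multiset of "subdiagram patterns" defining $r_2(D)$ and $r_3(D)$ — which allow the blank boxes to be either inside or outside $[n]\times[n]$ is irrelevant here since $D(\alpha)\subseteq[n]\times[n]$, but the patterns (B) vs (B'), and (C) vs (C') vs (C''), are genuinely different subsets — are counted without overlap and without omission when specialized to the prefix shape. Concretely, one should verify that for a fixed choice of rows and the relevant columns, exactly one of the listed configurations is realized, so that the sum over configuration types reassembles cleanly into the stated product formulas. Once this case analysis is laid out, each of $r_1,r_2,r_3$ evaluates to the corresponding sum over $\mathrm{inv}_1,\mathrm{inv}_2,\mathrm{inv}_3$, and plugging into \eqref{bnkat-1} yields the theorem. (One could alternatively phrase the whole argument through the permutation pattern statement of Theorem \ref{schres} when $\alpha$ arises from a permutation, but the diagram-level computation is cleaner and covers all compositions uniformly.)
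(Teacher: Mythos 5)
Your proposal is correct and follows essentially the same route as the paper: specialize Theorem \ref{res1} to the skyline diagram, then evaluate $r_1$, $r_2$, $r_3$ by direct column-range counting using left-justification (in particular noting that (B') cannot occur, and that (C), (C'), (C'') partition the choices of the two column positions by $j_1<j_2$, $j_1>j_2$, $j_1=j_2$). The paper compresses the $r_3$ count to ``similar analysis'' whereas you spell it out, but the computation and conclusion are the same.
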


Taking only the first  summation, Theorem  \ref{forkey} reduces to the following lower bound by 
M{\'e}sz{\'a}ros,  St. Dizier and  Tanjaya \cite[Corollary 20]{meszaros2021principal}:
\begin{equation*} 
\kappa_\alpha(1,\ldots, 1)\geq  1+ \sum_{\mathrm{inv}_1(\alpha)}(\alpha_{i_2}-\alpha_{i_1}).
\end{equation*} 

This paper is organized as follows. Section \ref{section-21} lays out basic information that we need about  the dual characters of flagged Weyl modules.  Section \ref{Finj} is devoted to a proof of  Theorem \ref{res1}, based on which we complete the proofs of Theorems \ref{schres} and \ref{forkey} in Section \ref{paoe}. We conclude in Section \ref{condlu-o9} with some  problems and  conjectures. We put some tables that are needed in the proof of Theorem \ref{schres} in the  appendix section. 

\subsection*{Acknowledgements}
We would like to thank Yibo Gao for helpful comments. 
This work was  supported by the National Natural Science Foundation of China (No. 12371329) and the Fundamental Research Funds for the Central Universities (No. 63243072).

\section{Dual characters of flagged Weyl modules}\label{section-21}

In this section, we review some necessary background on flagged Weyl modules, and explain   how their  dual characters specialize to Schubert and key polynomials.

Recall that a diagram $D$ is a subset of boxes in $[n]\times [n]$. Write $D=(D_{1}, D_{2},\ldots, D_{n})$, where, for $1\leq j\leq n$, $D_j$ denotes  the $j$-th column of $D$. 
We  also represent $D_j$ by  a subset of $[n]$, that is,   $i\in D_j$
 if and only if the box $(i,j)$ belongs to $D$. 
 For example, the diagram in Figure \ref{Buys}  can be expressed   as $D=(\{2,3,4\},\emptyset,\{1,2\},\{3\})$.
\begin{figure}[h]
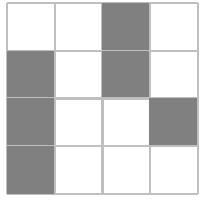

    \centering
$$\BPD[1.5pc]{
\O\O\G\O\\
\G\O\G\O\\
\G\O\O\G\\
\G\O\O\O\\

}$$
\vspace{-5pt}
\caption{A diagram in  $[4]\times [4]$.}
\label{Buys}   
\end{figure}

Let $R=\{r_1< \cdots<r_k\}$ and $S= \{s_1< \cdots<s_k\} $ be  two $k$-element  subsets  of $[n]$. We say $R\leq S$  if  $r_i\leq s_i$ for $1\leq i\leq k$. This defines a  partial order on all $k$-element subsets of $[n]$, which  is usually  called the {  Gale order}.  
For two diagrams $C=(C_{1}, \ldots, C_{n})$ and $D=(D_{1}, \ldots,D_{n})$,  denote   $C\leq D$ if $C_{j}\leq D_{j}$ for each $1\leq j\leq n$. 

Let $\mathrm{GL}(n,\mathbb{C})$ be the general linear group of $n\times n$ invertible complex matrices, and $B$  the Borel subgroup of $\mathrm{GL}(n,\mathbb{C})$ which consists of   upper-triangular matrices. We use  $Y$ to denote  the upper-triangular matrix of variables $y_{ij}$ with $1\leq i\leq j\leq n$:
\begin{equation*}
	Y=
	\begin{bmatrix}
		y_{11}&y_{12}&\ldots&y_{1n}\\
		0&y_{22}&\ldots&y_{2n}\\
		\vdots&\vdots&\ddots&\vdots\\
		0&0&\ldots&y_{nn}
	\end{bmatrix}.
\end{equation*}
Let   $\mathbb{C} [Y]$   be the linear space of polynomials  in   $\{y_{ij}\}_{i\leq j}$ over $\mathbb{C}$. Define the (right) action of $B$   on $\mathbb{C} [Y]$ by   $f(Y)\cdot b=f(b^{-1}\cdot Y)$, where $b\in B$ and  $f\in \mathbb{C} [Y]$. For two subsets $R$ and $S$ of $[n]$ with the same cardinality, let  $Y^{R}_{S}$ be  the submatrix of $Y$ with rows indexed by $R$ and columns indexed by $S$. Note  that $\det\left(Y^{R}_{S}\right)\neq 0$ if and only if $R\leq S$.
For $C=(C_{1}, \ldots, C_{n})$ and  $D=(D_{1}, \ldots, D_{n})$ with $C\leq D$, denote 
\[
\mathrm{det}\left(Y_{D}^C\right)=\prod_{j=1}^{n}\mathrm{det}\left(Y_{D_{j}}^{C_{j}}\right).
\]
The flagged Weyl module  associated to  $D$ is the subspace 
\begin{align*}	\mathcal{M}_{D}=\mathrm{Span}_{\mathbb{C}}\left\{\mathrm{det}\left(Y_D^C\right)\colon C\leq D\right\},
\end{align*}
which  is a $B$-module with the action inherited from the action of $B$ on $\mathbb{C} [Y]$.
 
Let $X=\mathrm{diag}(x_1,\ldots, x_n)$ be the diagonal matrix.
The character of $\mathcal{M}_{D}$ is defined as
\[\mathrm{char}(\mathcal{M}_{D})(x_{1},\ldots, x_{n})=\mathrm{tr}(X\colon \mathcal{M}_{D}\rightarrow \mathcal{M}_{D}).\] 
It is readily  checked  that  $\mathrm{det}\left(Y_{D}^C\right)$ for $C\leq D$  is an
eigenvector of $X$ with eigenvalue
\[\prod_{j=1}^n\prod_{i\in C_j}x_i^{-1}.\]
The    dual character is defined  to be 
$ \chi_{D}(x):=\mathrm{char}(\mathcal{M}_{D})(x_{1}^{-1},\ldots,x_{n}^{-1}).$ 
The weight vector $\mathrm{wt}(C)=(\alpha_1,\ldots, \alpha_n)$ of a diagram $C=(C_1,\ldots, C_n)$ is defined by letting $\alpha_i$ be the number of appearances of $i$ in   $C_1,\ldots, C_n$. 
Geometrically, $\alpha_i$ is the nubmer of boxes lying  in row $i$. 
The diagram in Figure \ref{Buys} has weight vector $(1, 2, 2,1)$. By the above arguments, we have the following characterization on the supports of $\chi_D(x)$, see  Adve,    Robichaux  and   Yong \cite{ARY}  for discussions  about the computational complexity for deciding the supports. 

\begin{prop}\label{gapnj-1}
  The set of supports of $\chi_D(x)$ is $\{\mathrm{wt}(C)\colon C\leq D\}$.   
\end{prop}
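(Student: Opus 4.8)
The plan is to use the fact that $\mathcal{M}_D$ is spanned by simultaneous eigenvectors for the diagonal torus action, so that its character is literally the generating function recording the dimensions of its weight spaces; it then remains to decide which weight spaces are nonzero.

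First I would record the action of $X=\mathrm{diag}(x_1,\ldots,x_n)$ on the spanning vectors: for $C\leq D$ the element $\det(Y_D^C)$ is an eigenvector of $X$ with eigenvalue $\prod_j\prod_{i\in C_j}x_i^{-1}=\prod_i x_i^{-\mathrm{wt}(C)_i}$, exactly the computation already indicated in the text. Hence $\mathcal{M}_D=\bigoplus_\mu (\mathcal{M}_D)_\mu$ is the direct sum of its $X$-weight spaces, and projecting any vector (written as a linear combination of the generators) onto a fixed weight space shows that $(\mathcal{M}_D)_\mu=\mathrm{Span}_{\mathbb{C}}\{\det(Y_D^C)\colon C\leq D,\ \mathrm{wt}(C)=\mu\}$. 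Taking the trace of $X$ over this decomposition gives $\mathrm{char}(\mathcal{M}_D)(x)=\sum_\mu \dim(\mathcal{M}_D)_\mu\cdot x^{-\mu}$, so after the substitution $x_i\mapsto x_i^{-1}$ we obtain $\chi_D(x)=\sum_\mu \dim(\mathcal{M}_D)_\mu\cdot x^{\mu}$. Consequently $\mu$ is a support of $\chi_D(x)$ if and only if $(\mathcal{M}_D)_\mu\neq 0$.

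Next I would identify when $(\mathcal{M}_D)_\mu\neq 0$. If there is no $C\leq D$ with $\mathrm{wt}(C)=\mu$, the spanning set displayed above is empty and the weight space vanishes. If such a $C$ exists, then $\det(Y_D^C)=\prod_j\det(Y_{D_j}^{C_j})$, and each factor is nonzero since $C_j\leq D_j$ in the Gale order (using the recorded fact that $\det(Y_S^R)\neq 0$ iff $R\leq S$); thus $\det(Y_D^C)$ is a nonzero element of $(\mathcal{M}_D)_\mu$. Combining the two directions yields that the supports of $\chi_D(x)$ are precisely $\{\mathrm{wt}(C)\colon C\leq D\}$.

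There is no real obstacle here; the one point deserving care is the step from "the generators are eigenvectors" to "each weight space is spanned by the generators of that weight," because the $\det(Y_D^C)$ need not be linearly independent — this is handled by the elementary observation that the image of a spanning set of eigenvectors under the projection onto an eigenspace is a spanning set of that eigenspace. One should also keep the inversion $x_i\mapsto x_i^{-1}$ straight, since the eigenvalues of $X$ carry negative exponents while the monomials of $\chi_D(x)$ are the honest weights $x^{\mathrm{wt}(C)}$.
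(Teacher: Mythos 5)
Your proof is correct and follows essentially the same approach as the paper: observe that each $\det(Y_D^C)$ is a nonzero simultaneous eigenvector of $X$ with eigenvalue $\prod_i x_i^{-\mathrm{wt}(C)_i}$, decompose $\mathcal{M}_D$ into $X$-weight spaces, and read off the supports of $\chi_D(x)$ after the substitution $x_i\mapsto x_i^{-1}$. Your additional remark that spanning by eigenvectors does not imply linear independence, and that the projection argument is what guarantees each weight space is spanned by the generators of that weight, is a point the paper leaves implicit and is correctly handled.
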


The Rothe diagram  $D(w)$ of a permutation $w\in S_n$ can be constructed  as follows. For $1\leq i\leq n$, place  a pot in row $i$ and column $w(i)$. Then $D(w)$ is obtained by ignoring all boxes to the right of a dot in the same row, and all boxes below a dot in the same column. The skyline diagram $D(\alpha)$ of a composition $\alpha=(\alpha_1,\ldots, \alpha_n)$ consists of the leftmost $\alpha_i$ boxes for each $1\leq i\leq n$.   Figure \ref{RSjlog} displays  the Rothe diagram of $w=1432$ and the skyline diagram of $\alpha=(1,3,0,2)$. 
 \begin{figure}[h]
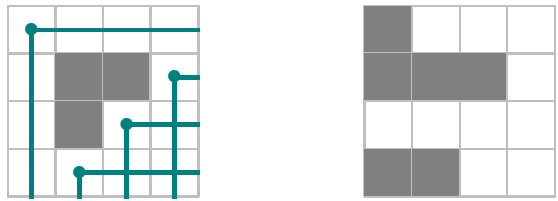

    \centering
    \begin{minipage}[b]{0.3\textwidth}
    $$
    \BPD[1.5pc]{
    \TT\H\H\H\\
    \I\G\G\TT\\
    \I\G\TT\X\\
    \I\TT\X\X\\
    }
    $$
    \end{minipage}
        \begin{minipage}[b]{0.3\textwidth}
    $$
    \BPD[1.5pc]{
    \G\O\O\O\\
    \G\G\G\O\\
    \O\O\O\O\\
    \G\G\O\O\\

    }
    $$
    \end{minipage}
    \caption{The Rothe diagram of $w=1432$ and the skyline diagram of $\alpha=(1,3,0,2)$.}\label{RSjlog}
\end{figure}
 As specializations, it is well known that $\chi_{D(w)}(x)=\mathfrak{S}_w(x)$ \cite{KP-1, KP-2} and $\chi_{D(\alpha)}(x)=\kappa_\alpha(x)$ \cite{D-2}.

\section{Proof of Theorem \ref{res1}}\label{Finj}

Let $D$ be a given diagram in $[n]\times [n]$. 
For  simplicity, we denote $r_i=r_i(D)$ for $i=1,2,3$. 
To finish the proof of Theorem \ref{res1}, by Proposition \ref{gapnj-1}, it suffices to  construct $r_1+r_2+r_3$ diagrams less than $D$ which have distinct  weight vectors.  To accomplish  this, we shall design three algorithms to produce  such diagrams.

\subsection{The first algorithm}\label{Sub-11}

The first algorithm produces a chain $C^1<C^2<\cdots<C^{r_1}<D$, including $r_1$ diagrams less than $D$. It  essentially obeys a similar idea to that in the proof of  \cite[Lemma 17]{meszaros2021principal}.

For $(i,j)\in D$, denote by $r_1(i,j)$ the number of blank boxes  above $(i,j)$ in the same column, namely,
$$
r_1(D;i,j)=\#\left\{i'\colon  i'<i,(i',j)\notin D\right\}.
$$
By definition, it follows  that 
\[
r_1=\sum_{(i,j)\in D}r_1(D;i,j).
\]

{\bf Algorithm 1}. 
First, we construct $C^{r_1}$. Among the boxes $(i, j)$ of $D$ such that $r_1(D; i,j)>0$, choose the top-left most one, say $(i_0, j_0)$.  Note that the box $(i_0-1, j_0)$ right above it must be blank.  Set $C^{r_1}=D\setminus \{(i_0, j_0)\}\cup \{(i_0-1,j_0)\}$. It is clear that $C^{r_1}<D$ and $r_1(C^{r_1})=r_1(D)-1$. 
Replacing  $D$ by $C^{r_1}$, we are given $C^{r_1-1}<C^{r_1}$ with $r_1(C^{r_1-1})=r_1(C^{r_1})-1$. Repeating this procedure yields  the desired chain $C^1<C^2<\cdots<C^{r_1}<D$.  An illustration for this algorithm is depicted in Figure \ref{VGoanj}, where the position  marked with a crossing means the box of $D$ which is moved up.
\begin{figure}[ht]
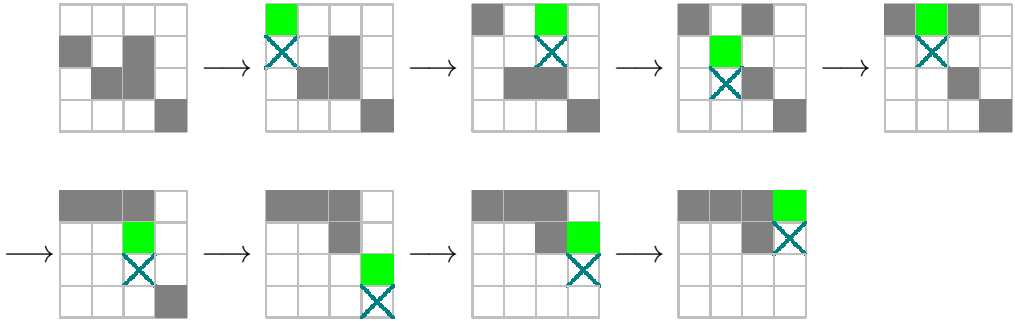

    \centering
        \begin{align*}
&\BPD{
\O\O\O\O\\
\g\O\g\O\\
\O\g\g\O\\
\O\O\O\g\\
}\longrightarrow
\BPD{
\r\O\O\O\\
\C\O\g\O\\
\O\g\g\O\\
\O\O\O\g\\
}\longrightarrow
\BPD{
\g\O\r\O\\
\O\O\C\O\\
\O\g\g\O\\
\O\O\O\g\\
}\longrightarrow
\BPD{
\g\O\g\O\\
\O\r\O\O\\
\O\C\g\O\\
\O\O\O\g\\
}\longrightarrow
\BPD{
\g\r\g\O\\
\O\C\O\O\\
\O\O\g\O\\
\O\O\O\g\\
}\\
~
\\
\longrightarrow
&\BPD{
\g\g\g\O\\
\O\O\r\O\\
\O\O\C\O\\
\O\O\O\g\\
}
\longrightarrow
\BPD{
\g\g\g\O\\
\O\O\g\O\\
\O\O\O\r\\
\O\O\O\C\\
}\longrightarrow
\BPD{
\g\g\g\O\\
\O\O\g\r\\
\O\O\O\C\\
\O\O\O\O\\
}\longrightarrow
\BPD{
\g\g\g\r\\
\O\O\g\C\\
\O\O\O\O\\
\O\O\O\O\\
}
\end{align*}
    \caption{An illustration for performing  Algorithm 1.}
   \label{VGoanj}
\end{figure}

\begin{re}
The  subdiagrams produced by Algorithm 1 are generally   different from the subdiagrams constructed  in \cite[Lemma 17]{meszaros2021principal}.
The reason that we adopt Algorithm 1 is that we need to produce subdiagrams whose weight vectors are distinct from those generated by Algorithm 2 and Algorithm 3 in  the next two  subsections.  
\end{re}

\begin{prop}\label{OBNHG}
The weight vectors of the $r_1$ diagrams  generated  in Algorithm 1 are   distinct.   
\end{prop}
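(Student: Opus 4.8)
The plan is to track the sequence of weight vectors produced by Algorithm 1 and show that consecutive ones differ; since each step only moves a single box upward, the weight vectors will in fact be strictly decreasing in a suitable order, so no repetition can occur. First I would record the elementary observation that replacing $D$ by $D\setminus\{(i_0,j_0)\}\cup\{(i_0-1,j_0)\}$ changes the weight vector $\mathrm{wt}(D)=(\alpha_1,\ldots,\alpha_n)$ by decreasing $\alpha_{i_0}$ by $1$ and increasing $\alpha_{i_0-1}$ by $1$, leaving all other coordinates fixed. Thus each step of the algorithm replaces the current weight vector $\alpha$ by $\alpha - e_{i_0} + e_{i_0-1}$, where $e_i$ is the $i$-th standard basis vector and $i_0$ is the row of the box that gets moved.

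The key step is to choose a monovariant. I would use the linear functional $\phi(\alpha) = \sum_{i=1}^n i\cdot \alpha_i$, i.e.\ the sum over all boxes of the diagram of the row index of the box. At each step of Algorithm 1 a box moves from row $i_0$ to row $i_0-1$, so $\phi$ strictly decreases by exactly $1$. Consequently the values $\phi(\mathrm{wt}(C^{r_1})) > \phi(\mathrm{wt}(C^{r_1-1})) > \cdots > \phi(\mathrm{wt}(C^{1}))$ — wait, more precisely $\phi(\mathrm{wt}(D)) > \phi(\mathrm{wt}(C^{r_1})) > \cdots > \phi(\mathrm{wt}(C^{1}))$ — are pairwise distinct. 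Since $\phi$ is a function of the weight vector, the weight vectors $\mathrm{wt}(C^1),\ldots,\mathrm{wt}(C^{r_1})$ themselves are pairwise distinct, which is exactly the assertion. (This also re-proves $C^1<C^2<\cdots<C^{r_1}<D$ as a byproduct, though that is already asserted in the construction.)

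I expect essentially no obstacle here — the argument is a one-line monovariant. The only point that needs a sentence of care is confirming that Algorithm 1 really does terminate after exactly $r_1$ steps and that $r_1(C^k) = r_1(C^{k+1})-1$ at each stage, so that the chain has the stated length; but this is built into the description of the algorithm (at each stage one picks the top-left box with $r_1(D;i,j)>0$, whose upper neighbor is blank, and moving it up decreases $r_1$ by exactly $1$ since no other $r_1(D;i',j')$ value is affected — the box above is in row $i_0-1$ and its own count of blanks above is unchanged, and boxes in other columns or in the same column below are unaffected). With that in hand, the monovariant $\phi$ finishes the proof immediately.
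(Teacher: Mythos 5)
Your argument is correct. Each step of Algorithm~1 replaces a box $(i_0,j_0)$ by $(i_0-1,j_0)$, so the functional $\phi(\alpha)=\sum_i i\,\alpha_i$ applied to the weight vector strictly decreases by $1$ at each step, giving pairwise distinct weight vectors along the chain $C^1<\cdots<C^{r_1}<D$. The paper instead simply cites \cite[Lemma 18]{meszaros2021principal}, which states that $C<D$ (in the column-wise Gale order) already forces $\mathrm{wt}(C)\neq\mathrm{wt}(D)$, and applies it to the chain. The two proofs rest on the same underlying fact; yours has the advantage of being self-contained and elementary (a one-line monovariant that also makes the strict decrease quantitative), while the paper's is shorter because it outsources the work to a more general cited lemma --- one which, incidentally, would also be needed if the chain did not arise from single box moves. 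Your aside about verifying that $r_1$ drops by exactly one per step is well-taken but is really part of the algorithm's description rather than of this proposition, and the paper treats it as such.
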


\begin{proof}
As explained  in  \cite[Lemma 18]{meszaros2021principal}, if $C<D$, then $C$ and $D$ have distinct weight vectors.  This allows us to conclude  the proof.
\end{proof}

\subsection{The second algorithm}\label{VHIHI-1}

The second algorithm will give rise to $r_2$ diagrams less than $D$   with distinct weight vectors. Moreover, these weight vectors are distinct from the weight vectors of diagrams  generated  by Algorithm 1. 

Given a pair of row indices  $1< i_1<i_2\leq n$ and a pair of column indices $1\leq j_1<j_2\leq n$,  let $r_2(D;i_1, i_2; j_1, j_2)$ denote the number  of row indices $i$ with $i<i_1$  such that the  subdiagram of $D$, which includes  the six boxes restricted to rows $\{i, i_1, i_2\}$ and columns $\{j_1, j_2\}$, is either  the configuration (B), or (B') in Figure \ref{config}.  
Summing over all possible row and column index pairs, we see that 
$$r_2=\sum_{\substack{1< i_1<i_2\le n\\
1\le j_1<j_2\le n}} r_2(D;i_1,i_2; j_1,j_2).
$$

The second algorithm will be well understood via an example by taking  $D$ to be the diagram in Figure \ref{BUJUF-3}. 
\begin{figure}[ht]
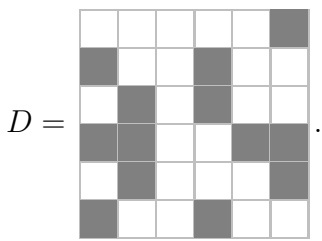


    \centering
    \vspace{-0.3cm}
  $$D=\BPD[1.2pc]{
    \O\O\O\O\O\gg\\
    \gg\O\O\gg\O\O\\
    \O\gg\O\gg\O\O\\
    \gg\gg\O\O\gg\gg\\
    \O\gg\O\O\O\gg\\
    \gg\O\O\gg\O\O\\
    }.$$
\vspace{-0.3cm}
 \caption{A diagram $D$ in $[6]\times [6]$.}
   \label{BUJUF-3}
\end{figure}
   
\textbf{Algorithm 2.} The algorithm will be performed for any given  pair of row indices $1<i_1<i_2\leq n$. Running over all   row index pairs, we receive  the desired $r_2$ diagrams less the $D$ with distinct weights.

Step 0: Let us fix $1<i_1<i_2\leq n$. Set $S_{i_1, i_2}(D)=\emptyset$. 
For each column of $D$, consider the boxes  of $D$ above row $i_1$. Move these boxes up along the column such that they occupy the topmost positions. For example, if we take $(i_1, i_2)=(4,5)$, then the resulting diagram of $D$ after Step 0 is illustrated in Figure \ref{Step-0}. 
\begin{figure}[ht]
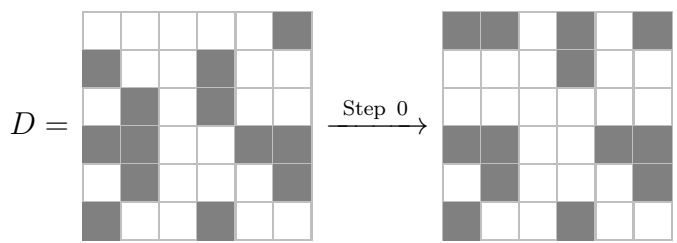

    \centering
    \vspace{-0.3cm}
  $$D=\BPD[1.2pc]{
    \O\O\O\O\O\gg\\
    \gg\O\O\gg\O\O\\
    \O\gg\O\gg\O\O\\
    \gg\gg\O\O\gg\gg\\
    \O\gg\O\O\O\gg\\
    \gg\O\O\gg\O\O\\
    }\xrightarrow{\mathrm{~Step~0~}}
    \BPD[1.2pc]{
    \gg\gg\O\gg\O\gg\\
    \O\O\O\gg\O\O\\
    \O\O\O\O\O\O\\
    \gg\gg\O\O\gg\gg\\
    \O\gg\O\O\O\gg\\
    \gg\O\O\gg\O\O\\
    }.$$
\vspace{-0.3cm}
 \caption{An illustration for Step 0 in Algorithm 2.}
   \label{Step-0}
\end{figure}

Notice that after Step 0, for any $1\leq j_1<j_2\leq n$, the boxes $(i_1-m, j_1)$ and $(i_1-m, j_2)$ for every $1\leq m\leq r_2(D;i_1, i_2; j_1, j_2)$ are blank boxes.

Step 1: Locate the leftmost column index, say column $j$, such that 
\begin{itemize}
    \item[(1)] the two boxes $(i_1, j)$ and $(i_2, j)$ form the configuration $
\BPD{\g\\
\O}
$;

    \item[(2)]  there exists (at least) one column index  $j'$ such that 
(i) both    $(i_1, j')$ and $(i_2, j')$ belong to $D$, and (ii)
$r_2(D;i_1,i_2; j,j')>0$ if $j<j'$, or $r_2(D;i_1,i_2; j',j)>0$ if $j'<j$.
\end{itemize}
We call the box $(i_1, j)$ a pivot. For the right picture in Figure \ref{Step-0}, the pivot box is $(4,1)$. Set $m:=1$.  If there is no pivot box, then the algorithm is done, and output $S_{i_1,i_2}(D)$.

Step 2: Locate all the column indices  $j'$ such that 
(i) both    $(i_1, j')$ and $(i_2, j')$ belong to $D$, and (ii)
$r_2(D;i_1,i_2; j,j')\ge m$ if $j<j'$, or $r_2(D;i_1,i_2; j',j)\ge m$ if $j'<j$. 
Suppose that there are $k$ such column indices, say  $j_1<\cdots<j_k$. For   $1\leq t\leq k$, let $C^t$ be the diagram obtained by moving the boxes $(i_2, j_1),\ldots, (i_2, j_t)$ up along the columns  to the positions $(i_1-m, j_1),\ldots, (i_1-m, j_t)$.  Set 
\[
S_{i_1,i_2}(D):=S_{i_1,i_2}(D)\cup \{C^1,\ldots, C^k\}.
\]

For the pivot $(4,1)$, we have $k=2$ and  $(j_1,j_2)=(2,6)$. The two diagrams $C^1$ and $C^2$ generated in this step are listed  in Figure  \ref{Step-22}.
\begin{figure}[ht]
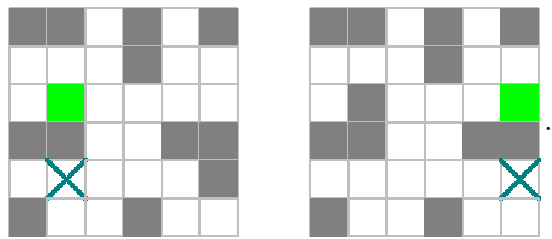

    \centering
    \vspace{-0.3cm}
 $$\BPD[1.2pc]{
    \gg\gg\O\gg\O\gg\\
    \O\O\O\gg\O\O\\
    \O\R\O\O\O\O\\
    \gg\gg\O\O\gg\gg\\
    \O\C\O\O\O\gg\\
    \gg\O\O\gg\O\O\\
    }\qquad
    \BPD[1.2pc]{
    \gg\gg\O\gg\O\gg\\
    \O\O\O\gg\O\O\\
    \O\gg\O\O\O\R\\
    \gg\gg\O\O\gg\gg\\
    \O\O\O\O\O\C\\
    \gg\O\O\gg\O\O\\
    }.$$
    \vspace{-0.3cm}
 \caption{An illustration for Step 2 in Algorithm 2.}
   \label{Step-22}
\end{figure}

Step 3: Place  all the $k$ boxes moved in Step 2 back to their original positions in row $i_2$ (thus recovering the diagram considered in Step 1). 
If there is some 
$r_2(D;i_1,i_2; j,j')> m$ for $j<j'$, or $r_2(D;i_1,i_2; j',j)> m$ for $j'<j$, then set 
$m:=m+1$ and  return  back to Step 2. Otherwise, move the pivot $(i_1,j)$ up by one unit to the position $(i_1-1,j)$,  and return  to Step 1. 

Continue the above example. 
For the pivot $(4,1)$, we have
\[
r_2(D;4,5; 1, 2)=r_2(D;4,5; 1, 6)=1.
\]
So, in Step 3, we do not need to go back to Step 2. What we do is to move the pivot  $(4,1)$ up to the position $(3,1)$ and then turn back to Step 1. Therefore,  the diagram, that will be considered in Step 1 in the second round of iteration,  is as depicted in 
Figure \ref{Step-22-45}.
\begin{figure}[ht]
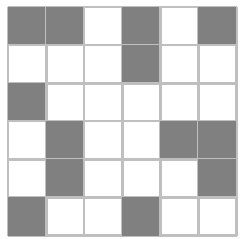

    \centering
    \vspace{-0.3cm}
$$
          \BPD[1.2pc]{
    \gg\gg\O\gg\O\gg\\
    \O\O\O\gg\O\O\\
    \gg\O\O\O\O\O\\
    \O\gg\O\O\gg\gg\\
    \O\gg\O\O\O\gg\\
    \gg\O\O\gg\O\O\\}
$$
  \vspace{-0.3cm}
 \caption{The diagram by moving the pivot up by a unit.}
   \label{Step-22-45}
\end{figure}

We move on to  the second round of iteration, starting with Step 1. Now the pivot box  is $(4,5)$. Notice that 
\[
r_2(D;4,5;2,5)=r_2(D;4,5;5,6)=2.
\]
So, after Step 2, we get two diagrams, as given in Figure \ref{Step-22-4521}.
\begin{figure}[ht]
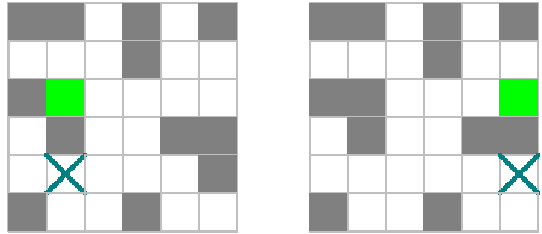

    \centering
    \vspace{-0.3cm}
$$
    \BPD[1.2pc]{
    \gg\gg\O\gg\O\gg\\
    \O\O\O\gg\O\O\\
    \gg\R\O\O\O\O\\
    \O\gg\O\O\gg\gg\\
    \O\C\O\O\O\gg\\
    \gg\O\O\gg\O\O\\}\qquad
    \BPD[1.2pc]{
    \gg\gg\O\gg\O\gg\\
    \O\O\O\gg\O\O\\
    \gg\gg\O\O\O\R\\
    \O\gg\O\O\gg\gg\\
    \O\O\O\O\O\C\\
    \gg\O\O\gg\O\O\\}
$$
 \vspace{-0.3cm}
 \caption{The first two diagrams generated in the second round of iteration.}
   \label{Step-22-4521}
\end{figure}
This time, in Step 3, we need to skip back to Step 2, generating  two more diagrams as shown  in Figure \ref{Step-2232-4521}.
\begin{figure}[ht]
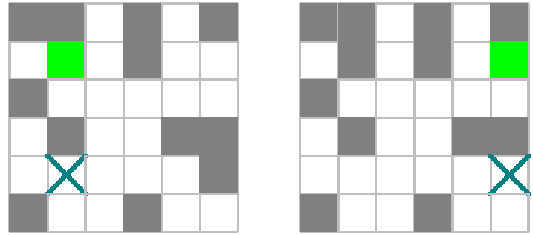

    \centering
    \vspace{-0.3cm}
$$\BPD[1.2pc]{
    \gg\gg\O\gg\O\gg\\
    \O\R\O\gg\O\O\\
    \gg\O\O\O\O\O\\
    \O\gg\O\O\gg\gg\\
    \O\C\O\O\O\gg\\
    \gg\O\O\gg\O\O\\
    }~~~~~ \BPD[1.2pc]{
    \gg\gg\O\gg\O\gg\\
    \O\gg\O\gg\O\R\\
    \gg\O\O\O\O\O\\
    \O\gg\O\O\gg\gg\\
    \O\O\O\O\O\C\\
    \gg\O\O\gg\O\O\\
    }.$$
  \vspace{-0.3cm}
 \caption{The second  two diagrams generated in the second round of iteration.}
   \label{Step-2232-4521}
\end{figure}

In the third round of iteration, there is no pivot any more, and so the algorithm terminates. Hence, for the diagram $D$ in Figure \ref{Step-0} and $(i_1, i_2)=(4,5)$, Algorithm 2 produces   a set $S_{4,5}(D)$ consisting of six diagrams less than $D$, as illustrated  in Figures \ref{Step-22}, \ref{Step-22-4521} and \ref{Step-2232-4521}.

We next show that after applying Algorithm 2 to all pairs $1< i_1<i_2\leq n $, the resulting diagrams have distinct weights. Moreover, these diagrams  have different weights from the diagrams generated by Algorithm 1. 

\begin{prop}\label{a2a2}
Let $1< i_1<i_2\leq n $. Then the diagrams in $S_{i_1,i_2}(D)$ have distinct weight vectors. Moreover, for $1<i_1'<i_2'\leq n$ with  $(i_1', i_2')\neq (i_1, i_2)$, the diagrams  in  $S_{i_1,i_2}(D)$ and   $S_{i_1',i_2'}(D)$ have distinct weight vectors.
\end{prop}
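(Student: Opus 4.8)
The plan is to track, for each diagram produced by Algorithm~2, enough combinatorial data in its weight vector to reconstruct the pair $(i_1,i_2)$, the pivot column, the iteration index $m$, and the index $t$ of which boxes were moved up. The key observation is that every diagram $C$ in $S_{i_1,i_2}(D)$ is obtained from the ``Step~0 normalized'' version of $D$ by moving some boxes $(i_2,j_1),\dots,(i_2,j_t)$ from row $i_2$ up to row $i_1-m$. Hence $C$ and $D$ agree in all rows outside $\{i_1-m,\dots,i_2\}$, and within this band the only changes are: row $i_2$ loses $t$ boxes, row $i_1-m$ gains $t$ boxes, and (crucially) rows $i_1,i_1-1,\dots,i_1-m+1$ have been filled in a fixed, pivot-independent way by the earlier iterations and by Step~0. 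I would first record the elementary \emph{weight-preservation/transfer} facts: Step~0 does not change $\mathrm{wt}(D)$; moving a box from row $i_2$ to row $i_1-m$ decreases $\alpha_{i_2}$ by $t$ and increases $\alpha_{i_1-m}$ by $t$, leaving all other coordinates fixed relative to the diagram at the start of that Step~2 call.

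The main step is \textbf{separation within a fixed $S_{i_1,i_2}(D)$}. I would argue as follows. Fix $(i_1,i_2)$ and consider two diagrams $C,C'\in S_{i_1,i_2}(D)$ with equal weight vectors. First I claim they come from the same iteration index $m$ and the same pivot: the diagram configuration in rows $i_1,i_1-1,\dots$ strictly above the ``active'' row $i_1-m$ is monotone across the run of the algorithm (each pivot move permanently deposits a box higher up, and once the pivot reaches row $i_1-1$ no further pivots occur for this column), so the total number of boxes in rows $>i_1-m$ — which is a partial sum of the weight coordinates $\alpha_{i_1},\alpha_{i_1-1},\dots$ — is strictly monotone in the ``time'' of the algorithm. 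Thus equal weights force the same $m$ and the same pivot stage. Within a single Step~2 call with that $m$, the diagrams $C^1,\dots,C^k$ differ precisely in how many boxes were lifted: $C^t$ has $\mathrm{wt}(C^t)_{i_2} = (\text{base value}) - t$. So equal weight in coordinate $i_2$ forces $t=t'$, hence $C=C'$.

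The second step is \textbf{separation across different pairs}, i.e. $S_{i_1,i_2}(D)$ versus $S_{i_1',i_2'}(D)$ for $(i_1',i_2')\ne(i_1,i_2)$, and also versus the output of Algorithm~1. For the cross-pair comparison I would split into cases by how $\{i_1,i_2\}$ and $\{i_1',i_2'\}$ interleave. The cleanest invariant is: a diagram in $S_{i_1,i_2}(D)$ has the form ``$D$ normalized above row $i_1$, then boxes pushed from $i_2$ into the band $[i_1-m,i_1]$''; in particular its weight vector agrees with $\mathrm{wt}(D)$ in every coordinate $\le$ something and is obtained by a very restricted perturbation otherwise. Concretely, $\mathrm{wt}(C)$ differs from $\mathrm{wt}(D)$ only in coordinates lying in $\{1,\dots,i_1\}\cup\{i_2\}$, with the total box count in rows $\{i_1+1,\dots,i_2-1\}$ \emph{unchanged}, while Step~0 redistributes mass only among rows $\{1,\dots,i_1\}$. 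If $i_2\ne i_2'$, then looking at which ``bottom'' coordinate decreased (the unique index below $i_1$ where a box was removed versus $D$) recovers $i_2$, unless the decrease is masked — this is the delicate point and is where I expect the real work to be, since Step~0 already moved boxes around in the top rows and one must check those moves cannot accidentally coincide with the lift of a box from a different row-pair. I would handle it by noting that for the \emph{chosen} pivot column $j$ we have $(i_1,j)\in D$ but $(i_2,j)\notin D$, and the lifted boxes sit in columns $j'$ with $(i_1,j'),(i_2,j')\in D$; tracing a single such column $j'$ through the Step~0 normalization shows row $i_2$ genuinely loses a box there relative to $D$, pinning down $i_2$, and then the band-structure pins down $i_1$. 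The comparison with Algorithm~1 is easier: Algorithm~1 only ever moves a box up by \emph{one} unit into an immediately-adjacent blank cell and never consolidates Step~0--style, so its diagrams satisfy $\sum_{i} i\cdot\mathrm{wt}(\cdot)_i < \sum_i i\cdot\mathrm{wt}(D)_i$ with a controlled defect, or more robustly one shows an Algorithm~1 output never has the ``two-or-more boxes transported across a gap'' signature that every Algorithm~2 output with $t\ge 1$ and $m\ge 1$ carries; I would phrase this via the invariant that Algorithm~1 preserves, for each prefix of rows, near-maximality, whereas Algorithm~2's Step~0 destroys it. I expect the cross-pair interleaving analysis (especially the subcase $i_1<i_1'<i_2<i_2'$ and the subcase of nested pairs) to be the principal obstacle, and I would organize it as a short lemma isolating the claim that the multiset of ``rows that lost a box relative to $\mathrm{wt}(D)$'' already determines $i_2$, and then that the position of the unique ``receiving'' row determines $i_1-m$, after which $m$ is read off from the prefix sums.
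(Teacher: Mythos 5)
Your plan tracks the right invariants in spirit, but two load-bearing claims are wrong or unsubstantiated, and the hard case is left open. First, the assertion that ``Step~0 does not change $\mathrm{wt}(D)$'' is false: Step~0 slides boxes upward along columns, so it redistributes mass among rows $1,\dots,i_1-1$ and in general alters the weight vector (see the paper's Figure~3.4, where rows $1,2,3$ change from $(1,2,2)$ to $(4,1,0)$). You contradict yourself later by saying Step~0 ``redistributes mass only among rows $\{1,\dots,i_1\}$.'' This vitiates the structural claim ``$C$ and $D$ agree in all rows outside $\{i_1-m,\dots,i_2\}$'' on which your reconstruction scheme rests; the correct statement is that $C$ agrees with the Step-0 base and with the pivot moves already applied, not with $D$. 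Second, the ``strict monotonicity in time'' argument is not correct as stated: the lifted-box count $t$ resets to $1$ whenever $m$ increments or a new pivot is taken, so simple prefix sums such as ``boxes in rows above the active row'' are not monotone in algorithm time, and you would need to justify that the specific functional you chose actually is. The paper avoids this entirely by a direct case split: if the two diagrams are produced at the same pivot, the lifted boxes land in either the same row (compare counts there) or two different rows (each receives a different nonzero amount); if at different pivots, the row-$i_1$ weights already differ because later pivots have had one more box moved out of row $i_1$, and Step~2 never touches row $i_1$.

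For the cross-pair case you explicitly flag as ``the principal obstacle,'' the paper's resolution is a concrete counting argument you would need to supply. If $i_1=i_1'$ and $i_2<i_2'$, the diagrams in $S_{i_1,i_2'}(D)$ have strictly fewer boxes in row $i_2'$ than those in $S_{i_1,i_2}(D)$, since only the former remove boxes from row $i_2'$. If $i_1<i_1'$, set $k=\#\{j\colon r_1(D;i_1,j)>0\}$; Step~0 for the pair $(i_1',i_2')$ moves all $k$ of those row-$i_1$ boxes above row $i_1$, whereas for the pair $(i_1,i_2)$ at most $k-1$ boxes from rows $\{i_1,i_2\}$ ever land above row $i_1$ (the pivot columns and the lifted columns $j'$ are disjoint subsets of those $k$ columns, and the current pivot has not yet been moved). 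Your proposed route via ``which bottom coordinate decreased'' does not by itself pin down $i_2$, because Step~0 for the other pair may also have removed boxes from several rows including rows in between, which is exactly the masking you worry about. So the proposal is a reasonable sketch of intent, but as written it has a false lemma, an unproved monotonicity claim, and an unresolved main case; the paper's proof is instead a short, direct comparison of specific row weights plus the counting lemma above.
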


\begin{proof}
Let $C$ be any given diagram in $S_{i_1,i_2}(D)$, and let  $C'\in S_{i_1,i_2}(D)$ with $C'\neq C$ and  
$C''\in S_{i_1',i_2'}(D)$.
We first check that $C$ and $C'$ have different weights. Suppose that $C$ is generated  prior to $C'$. The arguments are divided into two cases. 

Case 1. $C$ and $C'$ correspond to the same pivot. By the construction of Algorithm 2, $C$ and $C'$ are obtained from a same diagram by moving some boxes in row $i_2$ up to the same row or two different rows. Keep in mind that $C$ appears earlier than $C'$. If the boxes are moved up to the same row, say row $i$, then $C$ has less boxes than $C'$ in row $i$. If the boxes are moved up to two different rows, then it is clear that $C$ and $C'$ have different numbers of boxes in these two rows.  In both situations, $C$ and $C'$ have distinct weights.

Case 2. $C$ and $C'$ correspond to distinct pivots. In this case, notice that $C$ has  less  boxes than $C'$ in row $i_1$. So $C$ and $C'$ have distinct weights. 

We next check that $C$ and $C''$ have distinct weights. We also have two cases.

Case 1':  $i_1\neq i_1'$. Assume  $i_1< i_1'$ without loss of generality. Let 
$$
k=\#\left\{1\leq j\leq n\colon r_1(D;i_1,j)> 0\right\}.
$$
Concerning $C''$, Step 0 moves all boxes of $D$ above row $i_1'$ to the topmost rows, and so one moves exactly $k$ boxes in row $i_1$ to higher rows. However, by the construction of Algorithm 2 applied to the pair $(i_1, i_2)$, it is not hard to observe  that there are at most $k-1$ boxes of $D$ (lying in row  $i_1$ or $i_2$) that are moved up to rows higher than row $i_1$ (because the last pivot in row $i_1$ will not be moved up). So the weights of $C$ and $C''$ cannot be the same.

Case 2': $i_1=i_1'$, but $i_2\neq i_2'$. 
Suppose that $i_2<i_2'$. Then $C''$ has less boxes than $C$
in row $i_2'$. So $C$ and $C''$ have distinct weights. This completes the proof.
\end{proof}

\begin{prop}
For $1<i_1<i_2\leq n$, the diagrams in $S_{i_1,i_2}(D)$  and the diagrams generated by Algorithm 1 have distinct weight vectors.
\end{prop}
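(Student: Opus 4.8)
The plan is to compare the row where boxes end up. Fix a diagram $C\in S_{i_1,i_2}(D)$ produced by Algorithm 2 and a diagram $C^s$ produced by Algorithm 1, and show $\mathrm{wt}(C)\neq \mathrm{wt}(C^s)$. The key structural observation is that Algorithm 1 never disturbs the boxes of $D$ weakly below row $i_1$ \emph{past} the first moment it touches row $i_1$: more precisely, track the top-left box $(i_0,j_0)$ of $D$ with a blank box above it. In the chain $C^1<\cdots<C^{r_1}<D$ the diagrams are built from the top down, so in $D$ itself (hence in every $C^s$) all boxes in rows $<i_1$ that Algorithm 1 ever moves stay in rows $<i_1$, whereas the total box count in rows $1,\dots,i_1-1$ is an invariant of the whole Algorithm-1 chain only if no box is pushed across row $i_1$ — which, by construction of Algorithm 1 (it always moves a box up by exactly one unit into a blank cell, and blank cells of $D$ above row $i_1$ get used up before any box crosses into row $i_1$ from below) never happens. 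So every $C^s$ has the same number of boxes strictly above row $i_1$ as $D$ does, call it $N$.

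For the Algorithm-2 side, recall from Step 0 that all boxes of $D$ lying strictly above row $i_1$ are pushed to the topmost rows, and thereafter Algorithm 2 only moves boxes \emph{from rows $i_1$ and $i_2$} up into rows $i_1-1,\dots,i_1-r_2$, i.e. strictly above row $i_1$. Since the algorithm actually performs at least one such move to produce any element of $S_{i_1,i_2}(D)$ (the pivot box in row $i_1$, together with at least one box in row $i_2$, gets relocated above row $i_1$), every $C\in S_{i_1,i_2}(D)$ has \emph{strictly more} than $N$ boxes strictly above row $i_1$. Hence $\sum_{i<i_1}\mathrm{wt}(C)_i>N=\sum_{i<i_1}\mathrm{wt}(C^s)_i$, so the weight vectors differ.

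I would organize the write-up as: (1) a lemma isolating the invariant ``$\#\{\text{boxes of }E\text{ in rows }<i_1\}=N$ for every $E$ in the Algorithm-1 chain'', proved by induction on the chain using that each step moves one box up by one unit into a formerly blank cell and that blank cells above row $i_1$ are exhausted before a box is lifted from row $i_1$ or lower into row $i_1-1$ — here one uses that Algorithm 1 always selects the top-left-most box with a blank above it, so it clears columns top-to-bottom; (2) the observation that Step 0 of Algorithm 2 produces exactly $N$ boxes above row $i_1$ and that every subsequent move strictly increases this count; (3) conclude by comparing the truncated weight sums $\sum_{i<i_1}$.

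The main obstacle is step (1): making precise that Algorithm 1 never lifts a box from row $\ge i_1$ into row $i_1-1$. This needs the claim that before Algorithm 1 ever touches row $i_1$, it has already filled every blank cell of $D$ in rows $1,\dots,i_1-1$ with a box — which follows from the top-left-most selection rule (a box with a blank above it in a row $<i_1$ always has priority over any box in row $\ge i_1$), but requires care because moving boxes around in upper rows can create new blank cells in those rows; one must check such newly-created blanks are themselves re-filled before the frontier descends to row $i_1$. Once that is pinned down, the rest is the short counting argument above, essentially identical in spirit to Case 1$'$ of Proposition~\ref{a2a2}.
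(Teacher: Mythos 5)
Your argument hinges on the claim that the number of boxes strictly above row $i_1$ is constant along the entire Algorithm~1 chain $C^1<\cdots<C^{r_1}<D$, equal to the count $N$ in $D$. This claim is false. Algorithm~1 does not stop once the boxes of $D$ originally in rows $1,\ldots,i_1-1$ have been pushed to their topmost positions: if blank cells remain in rows $<i_1$ after that (which happens whenever some column $j$ has fewer boxes than $i_1-1$ positions), the algorithm goes on to lift boxes from row $i_1$ and from lower rows through row $i_1-1$ and beyond. A one-column example makes this plain: if $D$ has boxes exactly at $(3,1)$ and $(5,1)$ in a $5\times 5$ grid and $i_1=4$, then $D$ has one box above row $4$, but the last two diagrams in the Algorithm~1 chain (with boxes at $\{(1,1),(3,1)\}$ and $\{(1,1),(2,1)\}$) have two. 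Your attempted justification --- that ``blank cells of $D$ above row $i_1$ get used up before any box crosses into row $i_1$ from below'' --- does not hold; the blanks are not all used up, and once the boxes originally above row $i_1$ are exhausted, boxes do cross.

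Because of this, your final comparison points the wrong way. You try to conclude $\sum_{i<i_1}\mathrm{wt}(C)_i>\sum_{i<i_1}\mathrm{wt}(C^s)_i$, but for the $C^s$ far enough along the chain the right-hand side can exceed the left. The paper's proof runs in the opposite direction and uses a different pivot: assume $\mathrm{wt}(C)=\mathrm{wt}(C')$; since $C$ has fewer boxes than $D$ in row $i_2$, so does $C'$, which (by the top-left-most selection rule) forces Algorithm~1 to have already pushed everything in rows $\le i_1$ as far up as possible before it touches row $i_2$ --- in particular all $k=\#\{j:r_1(D;i_1,j)>0\}$ movable boxes of row $i_1$ are now above row $i_1$. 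But Algorithm~2 lifts at most $k-1$ boxes from rows $i_1$ and $i_2$ above row $i_1$ (the final pivot stays in row $i_1$). Hence $C'$ has \emph{strictly more} boxes above row $i_1$ than $C$, giving the contradiction. You need this extra input --- the depletion of row $i_2$ triggering the greedy exhaustion of rows $\le i_1$ --- rather than the false invariance of the above-$i_1$ box count.
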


\begin{proof}
Let $C\in S_{i_1,i_2}(D)$, and $C'$ be any diagram generated  by Algorithm 1. Suppose to the contrary that $C$ and $C'$ have the same weight vector. Note that $C$ has less boxes than $D$ in row $i_2$. Thus some boxes of $D$ in row $i_2$ must be moved up to form $C'$. This meanwhile tells that in the construction of $C'$, the boxes of $D$ lying above row $i_2$ are moved up to the topmost positions. Particularly, there are $k$ boxes of $D$ in row  $i_1$ that are moved up to higher rows, where, as in Case 1' in Proposition \ref{a2a2}, 
$$
k=\#\left\{1\leq j\leq n\colon r_1(D;i_1,j)\neq 0\right\}.
$$
However,  as explained  in Case 1' in Proposition \ref{a2a2}, there are at most $k-1$ boxes of $D$, that are moved up to the places above row $i_1$, to form 
$C$. This implies  that $C'$ has more boxes above row $i_1$ than $C$, leading to a contradiction.  
\end{proof}

\subsection{The third algorithm}\label{Sub-33}

We lastly describe the third  algorithm which will produce  $r_3$ diagrams with  weights different from those produced by Algorithm  1 or Algorithm 2. 


Denote by $D_{> i}$ the subdiagram of $D$ which includes the boxes of $D$ below row $i$. 
A box $(i,j)$ of $D$ is called  a PIVOT if $r_1(D; i,j)>0$ (equivalently, there is at least one blank box above $(i,j)$ in the same column). Here we use capital letters to distinguish with the pivots defined in Subsection  \ref{VHIHI-1}.

\begin{lem}
We have 
\[
r_3=\sum_{(i,j)}r_1(D; i,j)\times r_1(D_{>i}),
\]
where the sum ranges over all PIVOTs of $D$, and  $r_1(D_{>i})$ is the number of subdiagrams of $D_{>i}$ which are equal to the configuration (A) in Figure  \ref{config}.
\end{lem}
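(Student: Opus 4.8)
The plan is to prove the counting identity
\[
r_3=\sum_{(i,j)}r_1(D; i,j)\times r_1(D_{>i})
\]
by setting up a bijection between, on the one hand, the subdiagrams of $D$ equal to one of the configurations (C), (C'), (C'') and, on the other hand, triples consisting of a PIVOT $(i,j)$ of $D$, a choice of one of the $r_1(D;i,j)$ blank boxes above $(i,j)$ in column $j$, and a choice of one of the $r_1(D_{>i})$ configurations of type (A) occurring strictly below row $i$. Since the right-hand side is exactly the cardinality of that set of triples, it suffices to exhibit the bijection and check it is well defined in both directions.

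First I would unwind the definition of $r_3$: a subdiagram counted by $r_3$ is a four-box set of the form $\{(i_1,j_1),(i_2,j_1),(i_3,j_2),(i_4,j_2)\}$ (or the mirror variants for (C') and (C''), with possibly $j_1=j_2$ in case (C'')) in which exactly the ``lower'' box of each column pair belongs to $D$ and the ``upper'' box is blank, with the row indices strictly increasing and the column indices appropriately ordered. The key observation is that such a configuration decomposes canonically as a ``top part'' — a blank box directly above a box of $D$, i.e.\ a blank box $(i_1,j_1)$ with $(i_2,j_1)\in D$ — together with a ``bottom part'' — a copy of configuration (A), namely a blank box above a box of $D$, living entirely below the row $i_2$ of the lower box of the top part. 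Conversely, given a PIVOT $(i,j)$, each of the $r_1(D;i,j)$ blank boxes $(i',j)$ with $i'<i$ above it furnishes a valid ``top part'' $\{(i',j),(i,j)\}$, and each of the $r_1(D_{>i})$ type-(A) subdiagrams in $D_{>i}$ furnishes a valid ``bottom part''; glueing them gives back a configuration of type (C), (C'), or (C''), the three cases corresponding to whether the bottom column index is less than, greater than, or equal to $j$. One then checks the two maps are mutually inverse, which is routine once the decomposition is pinned down.

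The main technical point to get right is the case analysis matching the three configurations (C), (C'), (C'') against the sign of $j_2-j_1$ (equivalently, whether the bottom (A) sits to the left of, to the right of, or in the same column as the PIVOT), and making sure the ``exactly two boxes present'' condition in each configuration corresponds precisely to ``lower box in $D$, upper box blank'' in both the top and bottom parts — and that no double counting occurs, i.e.\ that the role of ``which box is the PIVOT'' is forced (it must be the lower box of the upper pair, since that is the only box whose presence is required together with a blank box strictly above it, while the bottom pair lies entirely below row $i$). I expect this bookkeeping — rather than any substantive mathematics — to be the only obstacle; the identity is essentially a reorganization of a sum, and once the canonical top/bottom split is identified the verification is immediate. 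Summing $r_1(D;i,j)\cdot r_1(D_{>i})$ over all PIVOTs $(i,j)$ then counts every such four-box configuration exactly once, which is the claim.
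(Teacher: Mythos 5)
Your proposal is correct and follows essentially the same approach as the paper: you identify the shaded box of the upper column-pair as the PIVOT, decompose the four-box configuration into that PIVOT together with a blank box above it and a type-(A) configuration in $D_{>i}$, and observe that this decomposition is canonical. The paper's proof is just a terser version of this same counting argument, noting that "the second box in each subdiagram is a PIVOT" and that the subdiagrams with a given second box $(i,j)$ are counted by $r_1(D;i,j)\times r_1(D_{>i})$.
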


\begin{proof}
Consider the   subdiagrams  of $D$  which is equal to the configuration (C), or (C'), or (C'') in Figure  \ref{config}. Note that the  second box in each subdiagram  is a PIVOT. Given a PIVOT $(i,j)$, the subdiagrams, whose second box is $(i,j)$, are  
 counted by $r_1(D; i,j)\times r_1(D_{>i})$, and so the proof is  complete. 
\end{proof}

\textbf{Algorithm 3.} 
Fix a PIVOT box $(i,j)\in D$. This algorithm will produce $r_1(D; i,j)\times r_1(D_{>i})$ diagrams less than $D$.

Step 0.   Move the boxes of $D$ above row $i_1$, along with the boxes of $D$  in  row $i_1$ to the left of $(i,j)$, up along the column such that they occupy the positions in the topmost rows.  See Figure   \ref{Step-0-00} for an illustration. Notice that in the resulting diagram,  there are $r_1(D; i,j)$ blank boxes right above $(i,j)$.
\begin{figure}[ht]
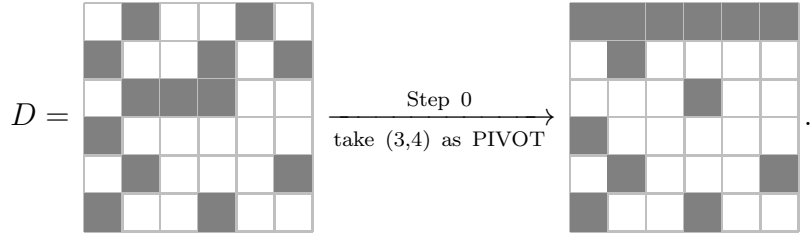

    \centering
    \vspace{-0.3cm}
  $$D=\BPD[1.2pc]{
    \O\gg\O\O\gg\O\\
    \gg\O\O\gg\O\gg\\
    \O\gg\gg\gg\O\O\\
    \gg\O\O\O\O\O\\
    \O\gg\O\O\O\gg\\
    \gg\O\O\gg\O\O\\
    }\xrightarrow[\mathrm{take~(3,4)~as~PIVOT}]{\mathrm{~Step~0~}}
    \BPD[1.2pc]{
    \gg\gg\gg\gg\gg\gg\\
    \O\gg\O\O\O\O\\
    \O\O\O\gg\O\O\\
    \gg\O\O\O\O\O\\
    \O\gg\O\O\O\gg\\
    \gg\O\O\gg\O\O\\
    }.$$
\vspace{-0.3cm}
 \caption{An illustration for Step 0 in Algorithm 3.}
   \label{Step-0-00}
\end{figure}

Step 1. Apply Algorithm 1 to $D_{>i}$. 
This gives rise to a chain of $r_1(D_{>i})$ diagrams less than $D$, say $C^1<C^2<\cdots<C^{k}$, where $k=r_1(D_{>i})$.
For $1\leq p\leq k$ and  $1\leq q\leq r_1(D; i,j)$, let $C^{p,q}$ be the diagram obtained from $C^p$ by moving the PIVOT $(i,j)$ up to the position $(i-q+1,j)$. 
This is well defined since there are $r_1(D; i,j)$ blank boxes right above $(i,j)$.
Note that $C^{p,1}=C^p$.
Set  
\[
T_{i,j}(D)= \left\{C^{p,q}\colon 1\leq p\leq k,\, 1\leq q\leq r_1(D; i,j)\right\}.
\]

\begin{prop}\label{a3a3}
Running over all PIVOTs of $D$, the  diagrams generated by Algorithm 3 have distinct weight vectors. 
\end{prop}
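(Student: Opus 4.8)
The plan is to prove that the weight vectors of all diagrams in $\bigcup_{(i,j)}T_{i,j}(D)$, the union taken over the PIVOTs of $D$, are pairwise distinct, by pinning down for each pair of such diagrams a row index at which they have different box counts. I will use two elementary facts repeatedly: (i) if $C<D$ then $C$ and $D$ have distinct weight vectors, as recalled in the proof of Proposition~\ref{OBNHG}; and (ii) since every operation of Algorithm~3 only raises boxes inside their columns, the partial sums $N_{\le m}(C):=\sum_{r\le m}\mathrm{wt}(C)_r$ satisfy $N_{\le m}(C)\ge N_{\le m}(D)$ for every $m$ and every $C$ that the algorithm outputs. The comparison of two diagrams falls into three cases: they arise from the same PIVOT; from PIVOTs in the same row; or from PIVOTs in different rows.

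First, two diagrams $C^{p,q},C^{p',q'}$ from the same PIVOT $(i,j)$. The key point is a decomposition: every $C^{p,q}\in T_{i,j}(D)$ agrees, in rows $1,\dots,i$, with one fixed configuration (the Step~0 diagram of $D$ relative to $(i,j)$, with the PIVOT removed) plus the PIVOT inserted in row $i-q+1$, and it agrees, in rows $i+1,\dots,n$, with the $p$-th term of the Algorithm~1 chain on $D_{>i}$. Thus $\mathrm{wt}(C^{p,q})$ is the sum of an ``upper'' vector depending only on $q$ and a ``lower'' vector depending only on $p$. If $q\neq q'$, say $q<q'$, the two upper vectors already differ at row $i-q'+1\le i$, where one diagram carries the PIVOT and the other does not, all remaining boxes in rows $\le i$ being common to both. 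If $q=q'$ but $p\neq p'$, the two diagrams coincide in rows $1,\dots,i$, while their restrictions to rows $i+1,\dots,n$ are two distinct members of an Algorithm~1 chain, hence have distinct weight vectors by fact~(i). Either way the weight vectors differ.

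Next, two diagrams from distinct PIVOTs. Let $C$ come from $T_{i,j}(D)$ and $\tilde C$ from $T_{i',j'}(D)$; after interchanging the two, we may assume $i<i'$, or $i=i'$ and $j<j'$. The crucial observation is the identity $N_{\le i}(C)=N_{\le i}(D)$: in Algorithm~3 for $(i,j)$ every move stays either within rows $1,\dots,i$ (the Step~0 reshuffles of the columns left of $j$, and the raising of the PIVOT in column $j$, which starts at row $i$) or within rows $i+1,\dots,n$ (the Algorithm~1 sweep of $D_{>i}$), so no box crosses from rows $>i$ into rows $\le i$. If $i<i'$ and $N_{\le i}(\tilde C)>N_{\le i}(D)$, then $N_{\le i}(\tilde C)>N_{\le i}(C)$ and we are done. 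In the remaining situation (either $i=i'$, or $i<i'$ with $N_{\le i}(\tilde C)=N_{\le i}(D)$) I will compare $C$ and $\tilde C$ at level $m:=i-r_1(D;i,j)$, which lies in $\{1,\dots,i-1\}$ since $(i,j)$ is a PIVOT. Column $j$ contains exactly $m$ boxes of $D$ in rows $\le i$, the $m$-th being the PIVOT at row $i$. In the construction of $\tilde C$, column $j$ gets pushed to the top through a row $\ge i$ (through row $i'-1\ge i$ when $i<i'$; through row $i$ when $i=i'$ and $j<j'$, since then column $j$ lies left of the pivot column $j'$), so column $j$ contributes a full $m$ boxes to $N_{\le m}(\tilde C)$; in the construction of $C$, column $j$ is pushed only through row $i-1$ and its PIVOT is afterwards lifted to a row $\ge m+1$, so column $j$ contributes only $m-1$ to $N_{\le m}(C)$. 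A routine column-by-column check then finishes the count: the columns to the left of $j$ are pushed through a row $\ge i$ in both constructions (and when $N_{\le i}(\tilde C)=N_{\le i}(D)$ this raises no box from rows $>i$), so they contribute equally; the columns to the right of $j$ are pushed through row $i-1$ in $C$ but through a row $\ge i-1$ in $\tilde C$, so they contribute at most as much to $N_{\le m}(C)$ as to $N_{\le m}(\tilde C)$. Hence $N_{\le m}(C)\le N_{\le m}(\tilde C)-1$, and the weight vectors differ.

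I expect the last case, two PIVOTs in distinct rows, to be the main difficulty: it requires both the bookkeeping identity $N_{\le i}(C)=N_{\le i}(D)$ and the careful level-$(i-r_1(D;i,j))$ comparison, of which the same-row sub-case is a lightened version, whereas the same-PIVOT case is immediate from the upper/lower decomposition. (The complementary statement that the diagrams of the $T_{i,j}(D)$ have weight vectors distinct from all diagrams produced by Algorithms~1 and~2 will be handled by a separate proposition, using the same ``how many boxes were raised above row~$i$'' idea already employed in Proposition~\ref{a2a2}.)
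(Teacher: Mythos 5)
Your proof is correct and follows essentially the same strategy as the paper: for diagrams from the same PIVOT you separate the pivot row (depending on $q$) from the Algorithm~1 chain on $D_{>i}$ (depending on $p$), and for distinct PIVOTs you compare at row $m=i-r_1(D;i,j)$, where Step~0 for $(i',j')$ raises $(i,j)$ to $(m,j)$ while Algorithm~3 for $(i,j)$ leaves $(m,j)$ vacant. The paper compares the box count in row $m$ directly, observing that Step~0 for $(i',j')$ pushes every column at least as high as Step~0 for $(i,j)$ does, so your case split on whether $N_{\le i}(\tilde C)=N_{\le i}(D)$ is unnecessary (the column-by-column inequalities you state for columns right of $j$ already suffice for the columns left of $j$ as well), though it does no harm.
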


\begin{proof}
Assume that $C\in T_{i,j}(D)$ and $C'\in T_{i', j'}(D)$.  We show that   $C$ and $C'$  have distinct weights. We  consider the following two cases separately.

Case 1:   $(i,j)=(i',j')$. This can be seen as follows.
For $1\leq q \leq r_1(D; i,j)$, notice that 
\[
C^{1,q}<C^{2,q}<\cdots<C^{k,q},
\]
where   $k=r_1(D_{>i})$   as in Step 1. As explained in Proposition \ref{OBNHG}, we can conclude that $C^{1,q},\ldots,C^{k,q}$ have distinct weights.  
Moreover, by the construction in Step 1, it is obvious that the weights of $C^{p, q}$ and $C^{p', q'}$ for $p\neq p'$
are distinct. This verifies the assertion for $(i,j)=(i',j')$.

Case 2: $(i,j)\neq (i',j')$. Without loss of generality, assume that $i<i'$ or $i=i',j<j'$. We assert  that in row $i-r_1(D; i,j)$, $C'$ has more boxes than $C$. To see this, a key observation is that when Step 0 in  Algorithm 3 is applied  to the PIVOT $(i',j')$, the box $(i,j)$ is moved up to the position $(i-r_1(D; i,j),j)$. However, when Algorithm 3 is applied to the PIVOT $(i,j)$, Step 1 contributes no box to row 
$i-r_1(D; i,j)$ since  the box $(i,j)$  can be moved up at most  to the position  $(i-r_1(D; i,j)+1,j)$ (that is,   $(i-r_1(D; i,j),j)$ is a blank box in $C$). This implies that $C'$ has at least one more box  than $C$ in row $i-r_1(D; i,j)$, and so $C$ and $C'$ have distinct weights.
\end{proof}

\begin{prop}\label{a3a1}
For any PIVOT $(i,j)$, the  diagrams in $T_{i,j}(D)$ and the diagrams generated by Algorithm 1 have distinct weight vectors.
\end{prop}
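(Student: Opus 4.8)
The plan is to mimic the structure of the proofs of Propositions~\ref{a2a2} and~\ref{a3a3}: assume for contradiction that some $C\in T_{i,j}(D)$ and some diagram $C'$ generated by Algorithm~1 share the same weight vector, and then exhibit a row in which the two diagrams demonstrably differ. First I would recall the key structural features of the two constructions. Writing $a=r_1(D;i,j)\ge 1$, every diagram $C\in T_{i,j}(D)$ is obtained by first pushing all boxes of $D$ weakly above row $i$ (together with the boxes of row $i$ strictly left of column $j$) to the topmost positions of their columns, then running Algorithm~1 inside $D_{>i}$, and finally lifting the PIVOT $(i,j)$ to some position $(i-q+1,j)$ with $1\le q\le a$. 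In particular, the box $(i,j)$ is lifted at most to row $i-a+1$, so in $C$ the cell $(i-a,j)$ is blank, and moreover $C$ and $D$ agree in every row strictly above row $i$ \emph{except} for the column-$j$ shuffling and the boxes originally above row $i$ — crucially the total box count of $C$ in each row strictly above row $i-a+1$ equals that of the ``Step~0 diagram'', which is just $D$ with its top part compressed.

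The heart of the argument is a comparison in a single well-chosen row. I would argue as follows. Since $C$ has strictly fewer boxes than $D$ in row $i$ (the PIVOT has left row $i$, and Algorithm~1 on $D_{>i}$ never adds boxes to row $i$ because $D_{>i}$ lives strictly below row $i$), if $C'$ had the same weight vector as $C$ then $C'$ must also have fewer boxes than $D$ in row $i$; hence in forming $C'$ Algorithm~1 must have moved up at least one box of $D$ lying in row $i$ or below. By the top-left-most selection rule of Algorithm~1, this forces \emph{all} blank-box-above slots in rows $\le i$ to have been exhausted first, i.e.\ the boxes of $D$ weakly above row $i$ have been compressed to the tops of their columns in $C'$ as well. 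Now compare row $i-a$. On the $C'$ side, this compression pushes every box of column $j$ that started at or above row $i$ — the PIVOT $(i,j)$ included — up, and a routine count shows the PIVOT ends at row $i-a$ (all $a$ blank cells above it in $D$ get filled from below); so $C'$ has a box at $(i-a,j)$. On the $C$ side, as noted above, $(i-a,j)$ is blank. But the compression of the top part is identical in the two diagrams away from this discrepancy, so $C'$ has strictly more boxes than $C$ in row $i-a$, contradicting equality of weight vectors.

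The main obstacle I anticipate is making the phrase ``the compression of the top part is identical away from the discrepancy'' fully rigorous, i.e.\ ruling out the possibility that other columns rearrange differently in $C'$ versus $C$ in a way that exactly compensates for the extra box at $(i-a,j)$. To handle this cleanly I would not compare row $i-a$ globally but rather track, column by column, how many boxes of $D$ sitting weakly above row $i$ end up weakly above row $i-a+1$: in $C$ this number per column is determined purely by $D$ (Step~0 is a pure compression, and Algorithm~1 on $D_{>i}$ plus the single PIVOT lift cannot raise anything above row $i-a+1$), whereas in $C'$ the same compression happens \emph{plus} column $j$ contributes one additional box at row $i-a$. Summing over columns gives the strict inequality in the count of boxes in rows $\le i-a$, which is a weight-vector invariant, hence a contradiction. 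An alternative, perhaps shorter, route is to use the already-proved fact (from the proof of Proposition~\ref{OBNHG}, via \cite[Lemma 18]{meszaros2021principal}) that $C''<D$ implies $C''$ and $D$ have distinct weights: since every diagram produced by Algorithm~1 satisfies $C'<D$ and the Step~0 compression used by Algorithm~3 does \emph{not} change the weight vector, one can instead compare $C$ with $C'$ through their common ``ancestor'' $D$ and the explicit row $i-a$ discrepancy; I would present whichever version reads more smoothly, but the row $i-a$ count is the decisive point either way.
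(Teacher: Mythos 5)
Your proof takes the same basic tack as the paper's (find a row where $C$ and $C'$ must have different box counts), and your row $i-a$ comparison reproduces what the paper calls Case~2. But there are two genuine gaps.

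The central one is the inference that if $C'$ in the Algorithm~1 chain has fewer boxes than $D$ in row $i$, then the PIVOT must already sit at $(i-a,j)$ in $C'$. This is false: $C'$ can be an \emph{intermediate} diagram in which the PIVOT has been lifted only to $(i-s,j)$ for some $1\le s < a$, because Algorithm~1 moves one box up one unit per step. For example, with $D=\{(3,1),(2,2),(3,2),(5,1)\}$, PIVOT $(3,1)$, so $a=2$, the chain contains $C^6=\{(2,1),(1,2),(3,2),(5,1)\}$, which has fewer boxes than $D$ in row $3$, yet the PIVOT sits at $(2,1)$, not $(1,1)$; your row-$(i-a)$ comparison says nothing about $C^6$. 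The paper handles precisely these in-transit diagrams through its Case~1: it locates the index $\ell$ at which Algorithm~1 first moves the PIVOT, notes that $C^{\ell+1}$ coincides with the Step~0 output of Algorithm~3, and observes that for $\ell+1-a<t\le r_1$ the chain has not yet touched any row below $i$, so $(C^t)_{>i}=D_{>i}$, whereas $C_{>i}$ is a strictly smaller diagram than $D_{>i}$ and therefore has a different weight. Your argument supplies only Case~2 and omits Case~1 entirely, so the proof is incomplete.

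A smaller gap is the opening premise that $C$ has strictly fewer boxes than $D$ in row $i$. When $q=1$ the PIVOT stays at $(i,j)$, and if Step~0 moves nothing out of row $i$ (e.g.\ the PIVOT is the leftmost box of $D$ in row $i$ admitting a blank above), then $C$ has exactly as many boxes as $D$ in row $i$, so the premise fails and the chain of deductions never starts. Finally, the alternative route you sketch rests on the assertion that Step~0 of Algorithm~3 does not change the weight vector; this is false, since Step~0 physically moves boxes to higher rows.
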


\begin{proof}
Let $C\in T_{i,j}(D)$.
Write 
\[
C^1<C^2<\cdots <C^{r_1}<D
\]
for the chain of diagrams produced by Algorithm 1. Let $1\leq \ell\leq r_1$ be the index such that $C^\ell$ is the diagram obtained from its preceding diagram $C^{\ell+1}$ by moving $(i,j)$ up to the position $(i-1,j)$ (here we set $C^{r_1+1}=D$).
Notice  that 
\begin{itemize}
    \item $C^{\ell +1}$ is exactly the resulting diagram after applying Step 0 in Algorithm 3 to $D$;

    \item for $1\leq s< r_1(D;i,j)$, the diagram $C^{\ell+1-s}$ is obtained from $C^{\ell+1}$ by moving $(i,j)$ up to the position $(i-s,j)$
.
\end{itemize}
The arguments are divided into two cases.

Case 1.  $\ell+1 -r_1(D; i,j)<t\leq r_1$. In this case, note that $(C^t)_{>i}=D_{>i}$. However, $C_{>i}$ and $D_{>i}$ have different weights, and so the weights of $C$ and $C^t$
are distinct. 

Case 2. $1\leq t\leq \ell+1-r_1(D; i,j)$. In this case,  during the construction of $C^t$, the box $(i,j)$ of $D$ is moved up to the position $(i-r_1(D; i,j),j)$. 
For the analogous  reason to the proof of the $(i,j)\neq (i',j')$ case in  Proposition \ref{a3a3},  $C^t$ has more boxes than $C$ in row $i-r_1(D; i,j)$. This completes the proof. 
\end{proof}

\begin{prop}

For any PIVOT $(i,j)$ and any row indices $i_1<i_2$, the  diagrams in $T_{i,j}(D)$ and the diagrams in $S_{i_1,i_2}(D)$ generated by Algorithm 2 have distinct weight vectors.
\end{prop}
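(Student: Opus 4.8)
The plan is to compare a diagram $C \in T_{i,j}(D)$ produced by Algorithm 3 (with PIVOT $(i,j)$) against a diagram $C' \in S_{i_1,i_2}(D)$ produced by Algorithm 2 (with row pair $1 < i_1 < i_2 \le n$), and to exhibit, in each case, a row in which $C$ and $C'$ provably have different numbers of boxes. The guiding principle, used repeatedly in the preceding propositions, is that the two algorithms push boxes upward from very different ``reference rows'': Algorithm 2 flattens everything of $D$ that lies above row $i_1$ (moving exactly $k$ boxes out of row $i_1$, where $k = \#\{j' : r_1(D;i_1,j') > 0\}$, but crucially \emph{leaving} the last PIVOT of row $i_1$ in place, so at most $k-1$ of those land above row $i_1$), while Algorithm 3 flattens everything of $D$ above row $i$ together with the boxes of row $i$ strictly left of column $j$, and then additionally slides the single box $(i,j)$ up by between $0$ and $r_1(D;i,j)$ units.

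First I would dispose of the easy sub-cases by a direct box-count in a single distinguished row. If $i_2 \ne i$ and $i_2$ is not among the rows that Algorithm 3 ever touches — i.e. $i_2$ is strictly below row $i$ or otherwise unaffected — then $C'$ has strictly fewer boxes in row $i_2$ than $D$ does (Algorithm 2 always empties some of row $i_2$), whereas $C$ agrees with $D$ in row $i_2$ when $i_2 > i$, forcing distinct weights. Symmetrically, if $i < i_1$, then Algorithm 3 does not disturb the portion of $D$ at or above row $i_1$ \emph{except possibly} for $(i,j)$, so one counts boxes above row $i_1$: in $C'$ there are (by the Algorithm 2 analysis) at least $k$ boxes of rows $i_1, i_2$ moved to rows $< i_1$ but at most $k-1$ above row $i_1$... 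I would instead count boxes in row $i - r_1(D;i,j)$ or in the top band of rows, paralleling Case~1' of Proposition~\ref{a2a2} and Proposition~\ref{a3a1}: Algorithm 3 contributes no box to row $i - r_1(D;i,j)$ (the PIVOT can be lifted at most to $i - r_1(D;i,j)+1$), while Step~0 of Algorithm 2 relative to $(i_1,i_2)$ with $i_1 > i$ leaves $(i,j)$ exactly where $D$ had it, and if $D$ itself had a box there we are done, otherwise we compare further up.

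The genuinely delicate case is the ``interleaved'' one, where $i_1 \le i \le i_2$ (in particular $i$ lies between the two Algorithm-2 rows), so that both algorithms may be rearranging boxes in overlapping bands of rows. Here I would argue via the total number of boxes of $D$ that end up strictly above row $i_1$. For $C'$ (Algorithm 2): Step~0 lifts every box of $D$ above row $i_1$ to the topmost positions, and then the algorithm only moves boxes of rows $i_1$ and $i_2$; by the ``last PIVOT stays'' observation, the count of boxes of $C'$ lying strictly above row $i_1$ equals (boxes of $D$ strictly above row $i_1$) plus at most $k - 1$, and in fact one can pin it down more precisely. For $C$ (Algorithm 3 with PIVOT $(i,j)$, $i \ge i_1$): Step~0 lifts the boxes of $D$ above row $i$ and the boxes of row $i$ left of $(i,j)$ to the top, then Step~1's invocation of Algorithm~1 on $D_{>i}$ never moves a box above row $i$, and the final lift of $(i,j)$ moves it up by at most $r_1(D;i,j)$; so the boxes of $C$ above row $i_1$ are (boxes of $D$ above row $i_1$, rearranged) plus the at-most-one contribution of $(i,j)$ if it rose past row $i_1$. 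Comparing these two tallies, and using that Algorithm 2 is \emph{forced} to vacate row $i_1$ of at least one of its original PIVOTs (leaving row $i_1$ with strictly fewer boxes than $D$ in $C'$) whereas Algorithm 3 leaves row $i_1$ with at least as many boxes as $D$ (it only adds to upper rows, namely possibly $(i,j)$ if $i = i_1$, or nothing if $i > i_1$), gives a row — namely row $i_1$ itself — where the box-counts differ. The main obstacle I anticipate is bookkeeping the case $i = i_1$ cleanly: then $(i,j)$ is itself a PIVOT of row $i_1$, and whether it is ``the last PIVOT'' of row $i_1$ (the one Algorithm 2 refuses to move) or an earlier one must be handled, together with the fact that $(i,j)$ gets lifted in Algorithm 3; resolving this will likely require splitting on whether column $j$ is the rightmost column of row $i_1$ carrying a blank-above box, and then again comparing box-counts in row $i_1$ versus the topmost band, exactly in the spirit of Propositions~\ref{a2a2} and~\ref{a3a1}.
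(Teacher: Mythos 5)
Your overall strategy---comparing box-counts of $C\in T_{i,j}(D)$ and $C'\in S_{i_1,i_2}(D)$ in carefully chosen rows or bands, in the spirit of Propositions \ref{a2a2}--\ref{a3a1}---is the right one, and your treatment of the sub-case $i<i_1$ (comparing the counts in row $i-r_1(D;i,j)$) essentially coincides with the paper's. However, two of your specific claims do not hold. First, the assertion that ``$C$ agrees with $D$ in row $i_2$ when $i_2>i$'' is false: Step~1 of Algorithm~3 applies Algorithm~1 to $D_{>i}$, which rearranges boxes among the rows $i+1,\dots,n$, and these moves can change the number of boxes in row $i_2$ whenever $i_2>i$. (Indeed $T_{i,j}(D)$ is nonempty only when $r_1(D_{>i})\ge 1$, so at least one box strictly below row $i$ is always displaced.) The quantity Algorithm~3 actually preserves is the \emph{total} number of boxes over all rows $>i$, not the count in any single such row. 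Second, in your ``interleaved'' case the claim that ``Algorithm 3 leaves row $i_1$ with at least as many boxes as $D$'' is insecure once $i>i_1$: Step~0 compresses every column above row $i$, which both removes from row $i_1$ the boxes having a blank above them and may bring boxes into row $i_1$ from rows strictly between $i_1$ and $i$, so the net change in row $i_1$ is not controlled.

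Beyond these local errors, your three sub-cases ($i_2>i$; $i<i_1$; $i_1\le i\le i_2$) do not exhaust the possibilities---the case $i>i_2$ is omitted---and the case $i=i_1$ is left unresolved. The paper closes both gaps with two clean invariants. For $i>i_1$ (which subsumes both your interleaved range and the missing $i>i_2$), one compares the \emph{total} number of boxes lying strictly above row $i_1$: in forming $C$, Step~0 of Algorithm~3 lifts all $k:=\#\{j'\colon r_1(D;i_1,j')>0\}$ boxes of row $i_1$ above it, so $C$ has at least $k$ more boxes than $D$ above row $i_1$; whereas in forming $C'$, at most $k-1$ boxes from rows $i_1$ and $i_2$ are ever lifted above row $i_1$ (the ``last pivot of row $i_1$ stays put'' observation from the proof of Proposition \ref{a2a2}). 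For $i=i_1$, one compares the total number of boxes in rows $>i$: Algorithm~3 preserves it (Step~0 and the lift of $(i,j)$ act only on rows $\le i$, and Step~1 merely permutes boxes within rows $>i$), while Algorithm~2 strictly decreases it (every $C'\in S_{i_1,i_2}(D)$ removes at least one box from row $i_2>i_1=i$). This reduces the whole proof to the dichotomy $i=i_1$ versus $i\neq i_1$, and makes the extra sub-splitting you anticipate for $i=i_1$ unnecessary.
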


\begin{proof}
Let $C\in  T_{i,j}(D)$ and $C'\in S_{i_1,i_2}(D)$. We have two cases.

    Case 1: $i\neq i_1$. If $i<i_1$, then we can show that $C'$ has at least one more box than $C$ in row $i-r_1(D;i,j)$. The arguments are completely similar to   Case 2 in the proof of Proposition  \ref{a3a3}, and so is omitted. 
If $i>i_1$, then, as in  Case 1' in the proof of  Proposition  \ref{a2a2}, let 
$$
k=\#\left\{1\leq j\leq n\colon r_1(D;i_1,j)\neq 0\right\}.
$$
In the construction of  $C$, there are $k$ boxes of $D$  
in row $i_1$
that are moved up to the area higher than row $i_1$. While, in the construction of  $C'$, there are at most $k-1$ boxes (from row $i_1$ or $i_2$) of $D$ that are moved up to the area higher than row $i_1$. 
So we see that  $C$ has more boxes than  $C'$ in the area above row $i_1$, and thus $C$ and $C'$ have distinct weights.

    Case 2: $i=i_1$. 
    In this case, note that $C_{>i}$ has the same number of boxes as $D_{>i}$, while  $(C')_{>i}$ has less boxes than $D_{>i}$. This implies that $C$ and $C'$ must have distinct weights.   So the proof is complete. 
\end{proof}

To conclude this section, we see that  Algorithms 1, 2 and 3 could be used to produce  a total of $r_1+r_2+r_3$ diagrams that are less than $D$.  Combined with  the propositions  in Subsections \ref{Sub-11}, \ref{VHIHI-1} and  \ref{Sub-33}, these diagrams  have   distinct weight vectors. This provides a  proof of Theorem \ref{res1}.

\section{Proofs of Theorems \ref{schres} and \ref{forkey}} \label{paoe}

In this section, we specialize $D$ in Theorem \ref{res1} to a Rothe diagram or a skyline diagram, thereby completing   the proofs of Theorems \ref{schres} and \ref{forkey}. 

\subsection{Proof of Theorem \ref{schres}}
In this subsection, we let $D=D(w)$ be the Rothe diagram of a permutation $w\in S_n$. 
It suffices to  prove  the following.

\begin{theo}\label{0ing-1}
For $w\in S_n$, we have  \begin{align}\label{INYFG-1}
r_1(D(w))+r_2(D(w))&+r_3(D(w))\nonumber\\
        &\ge  p_{132}(w)+p_{1432}(w)+p_{13254}(w)+3p_{14253}(w)
        \nonumber\\
        &\ \ \ \ +p_{14352}(w)+4
p_{15243}(w)+p_{15324}(w)+2p_{15342}(w)
        \\
        &\ \ \ \ +p_{15432}(w)+p_{24153}(w)+2p_{25143}(w)+p_{35142}(w).\nonumber
\end{align}   
\end{theo}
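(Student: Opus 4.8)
The plan is to rewrite the left-hand side of \eqref{INYFG-1} as an explicit non-negative integer combination of pattern-occurrence numbers $p_u(w)$, and then to discard all but the twelve patterns appearing on the right-hand side. Throughout I use the description $(i,j)\in D(w)$ if and only if $w(i)>j$ and $w^{-1}(j)>i$.

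\textbf{Step 1: the meaning of $r_1$.} If $(i,j)\in D(w)$ and $i'<i$ with $(i',j)\notin D(w)$, then, since $w^{-1}(j)>i>i'$, the only way $(i',j)$ can be blank is that $w(i')<j$; conversely every such $i'$ yields a blank box above $(i,j)$. Writing $j=w(c)$ with $c:=w^{-1}(j)>i$, the triple $(i',i,c)$ then satisfies $i'<i<c$ and $w(i')<w(c)<w(i)$, i.e. it is an occurrence of $132$; this is a bijection between the shape-(A) subdiagrams of $D(w)$ and the $132$ patterns of $w$. Hence $r_1(D(w))=p_{132}(w)$.

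\textbf{Step 2: decomposing $r_2$ and $r_3$.} The same idea applies with (at most) two columns. The crucial point is that in each of the configurations (B), (B'), (C), (C'), (C'') every column that is used is a \emph{value} of $w$: if $(i,j)$ is a shaded box of such a subdiagram then $w^{-1}(j)>i$, so $j=w(c)$ with $c:=w^{-1}(j)$. Therefore a subdiagram of $D(w)$ of one of these shapes is supported on the at most six positions obtained from the rows of the subdiagram together with the positions $w^{-1}(j_1)$ and $w^{-1}(j_2)$ of its columns, and—again by the criterion $(i,j)\in D(w)\iff w(i)>j,\ w^{-1}(j)>i$—whether such a subdiagram occurs depends only on the relative order of $w$ on these positions. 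Consequently there are non-negative integers $b_u,c_u$, indexed by permutations $u$ of size at most $6$, with
\[
r_2(D(w))=\sum_u b_u\,p_u(w),\qquad r_3(D(w))=\sum_u c_u\,p_u(w),
\]
where $b_u$ (respectively $c_u$) is the number of subdiagrams of the Rothe diagram $D(u)$ of shape (B) or (B') (respectively of shape (C), (C') or (C'')) whose rows together with the positions $u^{-1}(j_1),u^{-1}(j_2)$ of its columns exhaust all of $[\,|u|\,]$. This ``spanning'' requirement is precisely what prevents one subdiagram from being counted simultaneously for a pattern and for one of its shorter sub-patterns, so that the two displayed identities hold. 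The integers $b_u,c_u$ are then determined by a finite inspection, organized in the tables of the appendix; for instance one checks that a shape-(B) subdiagram whose $w^{-1}(j_2)$ equals its bottom row corresponds bijectively to a $1432$ pattern, and that a shape-(C'') subdiagram corresponds bijectively to a $14253$, $15243$, $24153$ or $25143$ pattern.

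\textbf{Step 3: assembling and truncating.} Adding the three contributions,
\[
r_1(D(w))+r_2(D(w))+r_3(D(w))=p_{132}(w)+\sum_u\bigl(b_u+c_u\bigr)p_u(w).
\]
All summands are non-negative, so we may delete every $u$ other than $132,1432,13254,14253,14352,15243,15324,15342,15432,24153,25143,35142$; the inequality \eqref{INYFG-1} then follows once the tables are used to confirm that, for each of these $u$, the coefficient $b_u+c_u$ (and $1+b_{132}+c_{132}$ when $u=132$) is at least the coefficient of $p_u(w)$ on the right-hand side of \eqref{INYFG-1}. I expect the genuine difficulty to lie entirely in Step 2: one must carefully handle the degenerate cases where $w^{-1}(j_1)$ or $w^{-1}(j_2)$ coincides with a row of the configuration (these are what produce the shorter patterns such as $1432$ and $13254$), keep track of whether a pair of shape-(A) subdiagrams assembles into shape (C), (C') or (C'') according to the relative order of their columns, and verify that the multiplicities—notably the $3$ in front of $p_{14253}$ and the $4$ in front of $p_{15243}$—come out correctly. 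Patterns of size $6$ (and all other unlisted patterns) contribute non-negatively and are simply dropped.
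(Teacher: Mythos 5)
Your plan is essentially the paper's proof: to each configuration-(B)/(B')/(C)/(C')/(C'') subdiagram of $D(w)$ you attach its set of ``supporting positions'' (the rows together with $w^{-1}$ of the columns), use the criterion $(i,j)\in D(w)\iff w(i)>j,\ w^{-1}(j)>i$ to see that membership depends only on the pattern of $w$ on those positions, and then read off coefficients by a finite inspection of small Rothe diagrams. Your exact decomposition $r_i(D(w))=\sum_u b_u\,p_u(w)$ (with the ``spanning'' condition defining $b_u,c_u$) is a clean restatement of the same bijection the paper invokes, in the contrapositive, to prove that its constructed subdiagrams are pairwise distinct; the paper's Remark 4.2 (the $w=162435$ example) is exactly a subdiagram whose supporting positions span all six entries and so contributes to a size-$6$ coefficient that both you and the paper drop. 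The one thing you have not actually done is the finite inspection itself: the verification that $b_u+c_u$ equals (or at least dominates) the coefficient of $p_u$ in \eqref{INYFG-1} for each of the twelve listed patterns, including the degenerate identifications ($w^{-1}(j_k)$ falling on a row of the configuration) that produce the size-$4$ pattern $1432$ and the multiplicities $3$ and $4$. That computation is precisely the content of the appendix Tables \ref{tab:my_label-1}, \ref{table-09}, \ref{tab:my_label-98} and is the bulk of the argument, so as written the proof is a correct plan rather than a complete proof.
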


\begin{proof}
For each permutation  $u$  appearing on the right-hand side of \eqref{INYFG-1}, let $P_u(w)$ denote the set of $u$ patterns in $w$, and $a_u$ be the coefficient of $p_u(w)$. For every $u$ pattern in $P_u(w)$, we shall construct $a_u$ subdiagams of $D(w)$, each of  which is equal to one of the configurations in Figure \ref{config}.  Then we conclude the proof by explaining that such constructed subdiagrams are  different from each other.

We first look at the construction of subdiagrams of $D(w)$ for $u=132$ or  $1432$. 
Let $w(i_1)w(i_2)w(i_3)\in P_{132}(w)$. The subdiagram of $D(w)$ generated by $w(i_1)w(i_2)w(i_3)$ is defined as   $\{(i_1, w(i_2)), (i_2, w(i_2))\}$, which is  the configuration (A) in  Figure \ref{config}. 
The construction is  displayed in the first line of Table \ref{tab:my_label-1}, where the boxes forming  the subdiagram are marked with   {\color{black} $\checkmark$}. 
This correspondence  has appeared in the proof of \cite[Corollary 19]{meszaros2021principal}, which is in fact a bijection between $P_{132}(w)$ and the set of subdiagrams of $D(w)$ which are equal to the configuration (A) in  Figure \ref{config}.
\begin{table}[h]
    \centering

    \begin{tabular}{c|cccc}
 patterns& subdiagrams& & & \\  
\hline
$    \BPD[1.2pc]{
    \N{}\\
    \T\H\H\\
    \I\gg\T\\
    \I\T\X\\
    \N{}\N{\text{\small 132 pattern}}}$&$    \BPD[1.2pc]{
    \N{}\\
    \T\HC\H\\
    \I\ggC\T\\
    \I\T\X\\
    \N{}\N{\text{\footnotesize (A)}}}$\\
    \hline
    $    \BPD[1.2pc]{
    \N{}\\
    \T\H\H\H\\
    \I\gg\gg\T\\
    \I\gg\T\X\\
    \I\T\X\X\\
    \N{}\N{\text{\small ~~ 1432 pattern}}}$&
    $    \BPD[1.2pc]{
    \N{}\\
    \T\HC\HC\H\\
    \I\ggC\ggC\T\\
    \I\ggC\TC\X\\
    \I\T\X\X\\
    \N{}\N{~~~~\text{\footnotesize (B)}}}$\\
\end{tabular}

    \caption{Subdiagrms generated by  132 or 1432 patterns.}
    \label{tab:my_label-1}
\end{table}

We next turn to the pattern $w(i_1)w(i_2)w(i_3)w(i_4)\in P_{1432}(w)$. The subdiagram of $D(w)$ generated by this pattern is defined as 
\[\left\{(i_1, w(i_4)), (i_1, w(i_3)),(i_2, w(i_4)), (i_2, w(i_3)), (i_3, w(i_4)), (i_3, w(i_3))\right\},\]
which forms the configuration (B) in   Figure \ref{config}. 
See the second  line of Table \ref{tab:my_label-1} for an illustration of the construction.

There are 10 permutations  $u$ in $S_5$ appearing on the right-hand side of \eqref{INYFG-1}. We put the constructions of subdiagrams of $D(w)$ for these $u$ patterns in Tables \ref{table-09} and \ref{tab:my_label-98} in the appendix. 

Let $\mathrm{Sub}(D(w))$ denote the collection (as multiset)  of all subdiagrams of $D(w)$ which could be produced  by   the $u$ patterns of $w$ as displayed in Tables \ref{tab:my_label-1}, \ref{table-09} and  \ref{tab:my_label-98}. 
The remaining work is to check that the subdiagrams  in $\mathrm{Sub}(D(w))$   are different. 
To do this, a crucial feature  that we can observe  from  Tables \ref{tab:my_label-1}, \ref{table-09} and  \ref{tab:my_label-98}  is that for any given subdaigram, say $D_{\mathrm{sub}}$, in $\mathrm{Sub}(D(w))$, we are able to recover  the (unique) pattern in $w$ from which $D_{\mathrm{sub}}$ is generated. 
To be specific, we  have the following explanations.
\begin{itemize}
    \item $D_{\mathrm{sub}}$ is the configuration (A) in Figure \ref{config}. In this case, $D_{\mathrm{sub}}$ has two boxes. Assume that the boxes lie in rows $\{i_1<i_2\}$ and column $j$. Then  the corresponding  132 pattern of $w$ includes the entries of $w$ at the positions $\{i_1, i_2, w^{-1}(j)\}$, where $w^{-1}$ is the inverse of $w$.  

    \item $D_{\mathrm{sub}}$ is the configuration (B), or (B') in Figure \ref{config}.
 Assume that the six boxes in $D_{\mathrm{sub}}$  lie in rows $\{i_1<i_2<i_3\}$ and columns $\{j_1<j_2\}$.  Then  the corresponding  pattern of $w$  includes the entries of $w$ at the positions $\{i_1, i_2, i_3, w^{-1}(j_1), w^{-1}(j_2)\}$.  
 It should be noted that there may happen that $i_3=w^{-1}(j_2)$, and in this case the pattern is a 1432 pattern.

    \item $D_{\mathrm{sub}}$ is the configuration (C), or (C'), or (C'') in Figure \ref{config}. Assume that the four boxes in $D_{\mathrm{sub}}$ lie in rows  $\{i_1<i_2<i_3<i_4\}$, and among the four boxes, the  lowest two     lie in column $j$. 
 Then  the corresponding pattern of $w$  includes the entries of $w$ at the positions $\{i_1, i_2, i_3, i_4, w^{-1}(j)\}$.
\end{itemize}

In view of the above observations, give two subdiagrams, say $D_{\mathrm{sub}}^1$ and $D_{\mathrm{sub}}^2$, in $\mathrm{Sub}(D(w))$, we can verify $D_{\mathrm{sub}}^1\neq  D_{\mathrm{sub}}^2$  by  contradiction. Suppose otherwise that $D_{\mathrm{sub}}^1=D_{\mathrm{sub}}^2$. Then they are generated by the same $u$ pattern in $w$. This only possibly happens in the case of the $u=15342$ pattern in Table \ref{tab:my_label-98}. However, the two subdigrams generated by a $u=15342$ pattern  are obviously distinct. This arrives at a contradiction. So the proof is complete. 
\end{proof}

\begin{re}
We remark that there may possibly exist instances of subdiagrams of $D(w)$, which are equal to the configurations in Figure \ref{config}, but cannot be produced  by the patterns of $w$ listed in Tables \ref{tab:my_label-1}, \ref{table-09}, or \ref{tab:my_label-98}.  
For example, consider the Rothe diagram $D(w)$ for $w=162435$. 
\begin{figure}[h t]
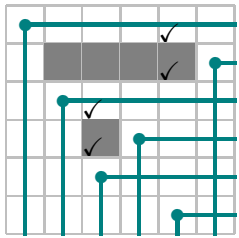

    \centering
$ \BPD[1.2pc]{
    \T\H\H\H\HC\H\\
    \I\gg\gg\gg\ggC\T\\
    \I\T\HC\H\H\X\\
    \I\I\ggC\T\H\X\\
    \I\I\T\X\H\X\\
    \I\I\I\I\T\X\\
    }   $
    \caption{A subdiagram of $D(162435)$.}
    \label{FIg:enter-label}
\end{figure}
Take the subdiagram consisting of the boxes with check marks. This subdiagram  cannot be generated by any $u\in S_5$ pattern of $w$. If it were generated by a $u$ pattern, then the $u$ pattern of $w$ would be $16243$ and so $u=15243$. However, in view of the  construction  in Table \ref{table-09}, the four subdiagrams of $D(162435)$ generated by the   pattern $16243$ do not include the one we are considering. 
\end{re}

\subsection{Proof of Theorem  \ref{forkey}}

Here we let $D=D(\alpha)$ in Theorem \ref{res1} be the skyline diagram of a weak composition $\alpha$. We   obtain a proof of  Theorem \ref{forkey} by verifying the following equality.

\begin{theo}
    For any (weak) composition $\alpha$, we have
\begin{align}\label{keyres}
r_1(D(\alpha))+r_2(D(\alpha))+r_3(D(\alpha))= & \sum_{\mathrm{inv}_1(\alpha)}(\alpha_{i_2}-\alpha_{i_1})+\sum_{\mathrm{inv}_2(\alpha)}(\alpha_{i_2}-\alpha_{i_3})\cdot(\alpha_{i_3}-\alpha_{i_1})\nonumber\\[5pt] & \ \ +\sum_{\mathrm{inv}_{3}(\alpha)}(\alpha_{i_2}-\alpha_{i_1})\cdot(\alpha_{i_4}-\alpha_{i_3}).
\end{align}   
\end{theo}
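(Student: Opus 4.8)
The plan is to prove the identity \eqref{keyres} by a direct combinatorial computation of the three statistics $r_1$, $r_2$, $r_3$ on the skyline diagram $D(\alpha)$, matching each term with the corresponding sum on the right-hand side. The starting point is the explicit description of $D(\alpha)$: the $i$-th row consists of exactly the boxes $(i,1),\ldots,(i,\alpha_i)$, so $(i,j)\in D(\alpha)$ iff $j\le\alpha_i$, and a box $(i,j)$ is blank iff $j>\alpha_i$. I would first record the elementary observation that in column $j$ the box $(i,j)$ lies in $D(\alpha)$ precisely when $\alpha_i\ge j$, and that for a fixed pair of rows $i_1<i_2$ with $\alpha_{i_1}<\alpha_{i_2}$, the columns $j$ with $(i_1,j)\notin D$ but $(i_2,j)\in D$ are exactly $j\in\{\alpha_{i_1}+1,\ldots,\alpha_{i_2}\}$, of which there are $\alpha_{i_2}-\alpha_{i_1}$.

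For $r_1$: by definition $r_1(D)=\sum_{(i,j)\in D}r_1(D;i,j)$, i.e.\ the number of pairs consisting of a box $(i_2,j)\in D$ and a blank box $(i_1,j)$ with $i_1<i_2$ in the same column; equivalently the number of triples $(i_1,i_2,j)$ with $i_1<i_2$, $\alpha_{i_1}<j\le\alpha_{i_2}$. Summing over $j$ for each admissible pair $(i_1,i_2)$ gives exactly $\sum_{\mathrm{inv}_1(\alpha)}(\alpha_{i_2}-\alpha_{i_1})$, which is the first sum. For $r_2$: a configuration (B) or (B') is a set of six boxes in rows $\{i<i_1<i_2\}$ and columns $\{j_1<j_2\}$ where $(i,j_1),(i,j_2)$ are blank, $(i_1,j_1),(i_1,j_2)$ are present, and exactly one of $(i_2,j_1),(i_2,j_2)$ is present. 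In skyline coordinates, $(i_1,j_1),(i_1,j_2)\in D$ forces $\alpha_{i_1}\ge j_2$; exactly one of $(i_2,j_1),(i_2,j_2)\in D$ forces $j_1\le\alpha_{i_2}<j_2$; and $(i,j_1),(i,j_2)$ blank forces $\alpha_i<j_1$. Reindexing the three rows as $(i_1,i_2,i_3)=(i,i_1,i_2)$ in the notation of $\mathrm{inv}_2(\alpha)$, the constraints become $\alpha_{i_1}<j_1<j_2\le\alpha_{i_2}$ and $j_1\le\alpha_{i_3}<j_2$, i.e.\ $\alpha_{i_1}<\alpha_{i_3}<\alpha_{i_2}$ (an $\mathrm{inv}_2$ triple), with $j_1$ ranging over $\{\alpha_{i_1}+1,\ldots,\alpha_{i_3}\}$ and $j_2$ over $\{\alpha_{i_3}+1,\ldots,\alpha_{i_2}\}$; the count of $(j_1,j_2)$ pairs is $(\alpha_{i_3}-\alpha_{i_1})(\alpha_{i_2}-\alpha_{i_3})$, matching the second sum. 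For $r_3$: a configuration (C), (C'), or (C'') is a set of four boxes in rows $i_1<i_2<i_3<i_4$ and columns $j_1\le j_2$ (with $j_1<j_2$ in (C),(C') and $j_1=j_2$ in (C'')), with $(i_1,j_1),(i_3,j_2)$ blank and $(i_2,j_1),(i_4,j_2)$ present. In skyline coordinates this says $\alpha_{i_1}<j_1\le\alpha_{i_2}$ and $\alpha_{i_3}<j_2\le\alpha_{i_4}$; in particular $\alpha_{i_1}<\alpha_{i_2}$ and $\alpha_{i_3}<\alpha_{i_4}$ (an $\mathrm{inv}_3$ quadruple), $j_1\in\{\alpha_{i_1}+1,\ldots,\alpha_{i_2}\}$, $j_2\in\{\alpha_{i_3}+1,\ldots,\alpha_{i_4}\}$, with no constraint linking $j_1$ and $j_2$ beyond $j_1\le j_2$; but the three cases (C),(C'),(C'') together cover exactly all pairs with $j_1<j_2$, $j_1>j_2$, $j_1=j_2$ respectively, so summing over the three configurations gives all $(j_1,j_2)$ pairs, of which there are $(\alpha_{i_2}-\alpha_{i_1})(\alpha_{i_4}-\alpha_{i_3})$, matching the third sum.

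The main subtlety — and the step I would take most care with — is the bookkeeping in the $r_2$ case and the $r_3$ case to make sure one is counting \emph{subdiagrams} of the prescribed shape and not overcounting or undercounting. Specifically: (i) one must check that in configuration (B)/(B') the constraint is genuinely ``$(i,j_1)$ and $(i,j_2)$ are both blank'' — which in the skyline diagram is automatic once $\alpha_{i_1}\ge j_2>j_1>\alpha_i$, so the top-row blanks impose no extra condition beyond $\alpha_i<j_1$; and (ii) one must verify that a given four-box subset of $D(\alpha)$ matching (C), (C'), or (C'') is counted exactly once, i.e.\ that the assignment of $(i_1,i_2,i_3,i_4)$ and the column of the lowest two boxes is unambiguous — which follows because in each of (C),(C'),(C'') the rows are strictly increasing and the roles of the present/blank boxes are fixed. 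Since (C) requires $j_1<j_2$, (C') requires $j_1>j_2$, and (C'') requires $j_1=j_2$, these three are mutually exclusive and exhaust all column configurations, so no double counting occurs across the three shapes. Once these points are nailed down, the identity \eqref{keyres} follows by summing the three matched contributions, and then Theorem \ref{forkey} is immediate from Theorem \ref{res1}, Proposition \ref{gapnj-1}, and the inequality $\kappa_\alpha(1,\ldots,1)=\chi_{D(\alpha)}(1,\ldots,1)\ge\theta_{D(\alpha)}$.
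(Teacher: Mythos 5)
Your proof is correct and takes essentially the same route as the paper: a direct enumeration of each configuration in the left‑justified skyline diagram, with the column‑index ranges giving the products $(\alpha_{i_2}-\alpha_{i_1})$, $(\alpha_{i_3}-\alpha_{i_1})(\alpha_{i_2}-\alpha_{i_3})$ and $(\alpha_{i_2}-\alpha_{i_1})(\alpha_{i_4}-\alpha_{i_3})$ for $r_1,r_2,r_3$ respectively, and with configuration (B$'$) ruled out by left‑justification (you do this implicitly via the constraint $j_1\le\alpha_{i_2}<j_2$, the paper states it explicitly). You simply spell out the $r_3$ count in more detail than the paper, which dismisses it with ``using similar analysis.''
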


\begin{proof}
It is obvious that
\[
r_1(D(w))=\sum_{(i_1,i_2)\in \mathrm{inv}_1(\alpha)}(\alpha_{i_2}-\alpha_{i_1}).
\] 
We next check that 
\begin{align}\label{fg-09}
  r_2(D(w))=\sum_{(i_1, i_2, i_3)\atop
\in \mathrm{inv}_2(\alpha)}(\alpha_{i_2}-\alpha_{i_3})\cdot(\alpha_{i_3}-\alpha_{i_1}).  
\end{align}
This can be seen as follows. First,  since $D(\alpha)$ is left-justified, there is no subdiagram of $D(\alpha)$ which is the configuration (B'). We are now  left with the enumeration of    subdigrams of $D(\alpha)$ which are equal to the configuration (B). Compute such  subdiagrams which lie in rows $\{i_1<i_2<i_3\}$. Clearly, we have $\alpha_{i_1}<\alpha_{i_3}<\alpha_{i_2}$. 
Observe that for each such subdiagram, the column index of its left three boxes is greater than $\alpha_{i_1}$ but less than or equal to $\alpha_{i_3}$, while the column  index of its right three boxes  is greater than $\alpha_{i_3}$ but less than or equal to $\alpha_{i_2}$. So there are a total of  $(\alpha_{i_2}-\alpha_{i_3})\cdot(\alpha_{i_3}-\alpha_{i_1})$ subdaigrams, which are the  configuration  (B), lying in rows  $\{i_1<i_2<i_3\}$.  This justifies  \eqref{fg-09}.

Using similar analysis, one can readily verify that 
$$r_3(D(\alpha))=\sum_{(i_1, i_2, i_3, i_4)\atop
\in \mathrm{inv}_3(\alpha)}(\alpha_{i_2}-\alpha_{i_1})\cdot(\alpha_{i_4}-\alpha_{i_3}).$$
This completes the proof of \eqref{keyres}.    
\end{proof}

\section{Concluding remarks}\label{condlu-o9}

In this section, we investigate some conjectures and problems concerning  $\theta_w$.

\subsection{The maximum value of $\theta_w$}

Let $\alpha_n$ be the largest    principal specialization  for  Schubert
 polynomials:
\[
\alpha_n=\max\{\nu_w\colon w\in S_n\}
\]
Merzon and  Smirnov  \cite{MS-1} predicted that the maximum value $\alpha_n$ is achieved on layered permutations. 
For positive integers $b_1,\ldots, b_k$ summing up to $n$, the associated layered permutation $w(b_1,\ldots, b_k)$ in $S_n$ is defined as the concatenation $w^1\cdots w^k$ of $k$ words, where  $w^i$ is obtained  by permuting  the  entries in the interval $[b_1+\cdots+b_{i-1}+1, b_1+\cdots+b_i]$ decreasingly. Here we set $b_0=0$. For example, $w(2,3,2,1)=21 543 76 8$.

\begin{conj}[Merzon--Smirnov\cite{MS-1}]
    For $n\geq 1$, the permutations in $S_n$ attaining $\alpha_n$ are layered permutations. 
\end{conj}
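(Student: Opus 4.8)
\medskip
\noindent\textbf{A possible approach.} We outline the line of attack that seems most natural. Since $S_n$ is finite, the conjecture is equivalent to the assertion that no non-layered permutation attains $\alpha_n$; and since a permutation is layered if and only if it avoids both $231$ and $312$, it is enough to prove that for every $w$ containing a $231$ or a $312$ pattern there is some $w'\in S_n$ with $\nu_{w'}>\nu_w$. Using the classical identity $\nu_w=\nu_{w^{-1}}$ (a consequence of Macdonald's formula \cite{Mac}, or of transposing pipe dreams) together with the fact that $231$ and $312$ are mutually inverse patterns, this reduces to the single case in which $w$ contains a $231$ pattern: if $w$ contains $312$ then $w^{-1}$ contains $231$, and an increase of $\nu$ for $w^{-1}$ yields one for $w$ after inverting back.

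The plan is then to design an explicit ``untangling'' move. Fix a canonically chosen occurrence of $231$ in $w$ (for instance the lexicographically least one) and modify $w$ on the bounded set of positions it involves to obtain $w'$ with strictly fewer occurrences of $231$ plus $312$ and with $\nu_{w'}\ge\nu_w$; applying such moves repeatedly, and arranging at least one strict gain before a layered permutation is reached, then completes the argument. For the monotonicity $\nu_{w'}\ge\nu_w$ there are two natural mechanisms. The first is an injection of the pipe dreams (equivalently, RC-graphs) of $w$ into those of $w'$, refining the pipe-dream bookkeeping used by Weigandt \cite{Anna} and by Gao \cite{gao}. The second is the Lascoux--Sch\"utzenberger transition recurrence, which expresses $\nu_w$ as a sum $\sum_v\nu_v$ of positive terms over permutations $v$ lying below $w$ and thus produces many inequalities $\nu_w\ge\nu_v$ that can be matched, by induction on length, against the corresponding expansion of $\nu_{w'}$.

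The main obstacle is precisely this monotonicity step. A single transposition that destroys one occurrence of $231$ usually creates and destroys many others and may well lower $\nu$, so the move has to be engineered with care, and both mechanisms above are delicate: a pipe-dream injection must respect the global ladder-move structure, while the transition tree branches enough that comparing the expansions for $w$ and $w'$ becomes cumbersome. Two safer preliminary targets seem worthwhile. One is the analogue for the support number $\theta_w$, and, via Theorem~\ref{res1} applied to skyline diagrams, for the key polynomials $\kappa_\alpha$: here the description of the supports as weight vectors of subdiagrams $C\le D(w)$ in Proposition~\ref{gapnj-1} may make the effect of a chosen move on the Newton polytope more transparent, after which one would try to transfer the argument to $\nu_w$. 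The other is to check the conjecture by computer for $n$ up to roughly $14$, both to identify the block sizes $(b_1,\dots,b_k)$ that realize $\alpha_n$ and to see how many layered permutations achieve it, since this explicit optimal shape should guide the choice of the untangling move.
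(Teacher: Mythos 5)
This statement is an open conjecture in the paper (attributed to Merzon--Smirnov), not a theorem: the paper offers no proof of it, so there is nothing to compare your argument against. Your text is also not a proof but an honest outline of a possible strategy, with the key gap clearly identified by you yourself.

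The basic reductions you propose are sound: a permutation is layered exactly when it avoids both $231$ and $312$; the identity $\nu_w=\nu_{w^{-1}}$ does follow from Macdonald's formula, since reduced words of $w^{-1}$ are reversals of those of $w$ and the product $a_1\cdots a_\ell$ is preserved; and $(231)^{-1}=312$, so it does suffice to handle the $231$ case. But the central step --- an explicit ``untangling'' move $w\mapsto w'$ that strictly decreases the number of $231$ and $312$ occurrences while not decreasing $\nu$, with a strict gain somewhere along the chain --- is missing, and this is exactly where the conjecture is hard. No injection of pipe dreams or matching of Lascoux--Sch\"utzenberger transition expansions is exhibited, and as you note a single transposition can both create and destroy many forbidden patterns and can lower $\nu$. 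Without such a lemma the argument does not close, so the conjecture remains open both in the paper and under your proposal. The suggested preliminary target of first settling the analogue for $\theta_w$ (Conjecture~\ref{conk-09} in the paper) using Proposition~\ref{gapnj-1} is reasonable, but that too is stated as open in the paper, so it does not shortcut the problem either.
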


Consider the largest value of $\theta_w$ with $w\in S_n$: 
\[
\beta_n=\max\{\theta_w\colon w\in S_n\}
\]
Computer evidence suggests  that   $\beta_n$ is also achieved  on layered permutations.

 \begin{conj}\label{conk-09}
    For $n\geq 1$, the permutations in $S_n$ attaining the maximum value $\beta_n$ are layered permutations. 
\end{conj}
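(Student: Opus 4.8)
The statement to be addressed is Conjecture~\ref{conk-09}, asserting that the maximum of $\theta_w$ over $w\in S_n$ is attained on layered permutations. Since this is stated as a conjecture rather than a theorem, I will outline the strategy one would pursue to settle it, emphasizing where the genuine difficulty lies. The natural plan is to proceed in two stages: first, reduce the class of candidate maximizers to permutations whose Rothe diagrams are Young-diagram-shaped (i.e. dominant permutations) or at least ``close'' to it, and second, within that restricted class identify the layered ones as optimal. For the first stage, one would want a monotonicity lemma: given $w\in S_n$ that is not layered, produce $w'\in S_n$ (of the same or smaller size) with $\theta_{w'}\ge\theta_w$ and strictly closer, in some well-ordered statistic, to being layered. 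A promising statistic is the number of ``non-nested'' inversions, or the size of $|D(w)|$ together with the lexicographic profile of column heights. Pattern-avoidance is the right language here: a permutation is layered iff it avoids $231$ and $312$, so one wants local moves that kill an occurrence of $231$ or $312$ without decreasing $\theta$.

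\textbf{Key steps.} First, I would express $\theta_w=\#\{\operatorname{wt}(C):C\le D(w)\}$ via Proposition~\ref{gapnj-1}, so that the problem becomes purely combinatorial: counting distinct weight vectors of diagrams Gale-below the Rothe diagram. Second, I would prove a diagram-surgery lemma of the form: if $D(w)$ contains a configuration certifying a $231$ or $312$ pattern, then there is an elementary transformation of the diagram (sliding a connected group of boxes leftward or compressing two columns) that corresponds to a permutation $w'$ with $D(w')\le D(w)$ after relabeling, yet whose set of achievable weight vectors $\{\operatorname{wt}(C):C\le D(w')\}$ contains (a shifted copy of) that of $D(w)$. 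The cleanest incarnation would be to show that left-justifying the Rothe diagram column by column — which turns $D(w)$ into a skyline diagram $D(\alpha)$ for a suitable $\alpha$ — never decreases $\theta$, and that among skyline diagrams with a fixed content the ``staircase-nested'' ones (exactly the skylines of layered permutations, rearranged) maximize the lattice-point count of the Schubitope. Third, I would invoke Theorem~\ref{forkey} and its underlying exact formula for $r_1+r_2+r_3$ on skyline diagrams as a lower bound, and complement it with a matching upper bound on $\theta_w$ in terms of $|D(w)|$ and the column-height multiset, showing that reshaping toward the layered profile can only help. Finally, one reconciles the two: among layered permutations the optimum is then a finite/one-parameter optimization already accessible to the Merzon--Smirnov circle of ideas for $\nu_w$, and one checks $\theta$ behaves the same way on that subfamily (plausibly because Schubert polynomials of layered permutations are closer to zero-one, or because the Schubitope of a layered permutation is a product of simplices whose lattice-point count is explicit).

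\textbf{Main obstacle.} The hard part will be the diagram-surgery / monotonicity lemma: it is \emph{not} true that $C\le D$ implies $\theta_C\le\theta_D$ in general (a smaller diagram can have more distinct weight vectors), so one cannot simply argue that left-justification, which shrinks the diagram in Gale order, increases $\theta$. One must instead track the weight vectors themselves and exhibit an explicit injection from $\{\operatorname{wt}(C):C\le D(w)\}$ into $\{\operatorname{wt}(C):C\le D(w')\}$ for each surgery step, analogous to the injective constructions of Algorithms~1--3 in Section~\ref{Finj} but now comparing two different ambient diagrams. Controlling that these injections compose coherently over a chain of surgeries reducing $w$ to a layered permutation — and in particular never losing a weight vector — is the crux; it is essentially a question of whether the map $D\mapsto(\text{Schubitope of }\chi_D)$ is monotone under the particular moves that untangle $231$ and $312$ patterns, and I expect this to require a careful case analysis of how a single such untangling affects the set of attainable column-wise Gale-minimizations. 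A secondary difficulty is the final step inside the layered family: even for $\nu_w$ the precise maximizing layer sequence is subtle, and transferring whatever is known there to $\theta_w$ needs the zero-one (or near-zero-one) property of Schubert polynomials of layered permutations, which would itself have to be established or cited.
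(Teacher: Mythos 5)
The statement you were asked to prove is labeled a \emph{Conjecture} in the paper, and the authors do not prove it; they only report computational verification for $n\le 9$ (Table~5.1) and raise it as an open problem. So there is no proof in the paper against which to compare your argument. You correctly recognize the statement's status and offer a research program rather than a proof, and you correctly identify the central obstruction: $C\le D$ in the Gale/column order does \emph{not} imply $\theta_C\le\theta_D$, so any ``left-justify the Rothe diagram'' scheme needs an explicit weight-vector injection at each step. Your recollection that layered permutations are exactly those avoiding $231$ and $312$ is also correct, and the two-stage reduction (first to something skyline-like, then an internal optimization over layer sizes) is the natural shape of an attack.

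Two concrete cautions about your heuristics. First, the suggestion that the optimum within the layered family might be accessible because ``Schubert polynomials of layered permutations are closer to zero-one'' is contradicted by the paper's own data: Table~5.1 shows $\nu_w$ and $\theta_w$ diverge badly on layered permutations (for $n=6$, $\nu_{126543}=84$ but $\theta_{126543}=65$; for $n=9$, $\nu_w=163592$ versus $\theta_w=17319$), and indeed for $n=7$ the $\nu$-maximizer ($1327654$) and $\theta$-maximizer ($1276543$) are different permutations. So the transfer of Merzon--Smirnov-type reasoning from $\nu_w$ to $\theta_w$ is more delicate than your sketch suggests, not less. Second, the claim that the Schubitope of a layered permutation is a product of simplices with an explicit lattice-point count would need to be checked; it is not stated in the paper and is unlikely to hold in that clean form for general layer profiles. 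These are exactly the kinds of structural facts you would need to establish before your plan could close, and neither is currently known. As a research outline your proposal is reasonable, but as a proof it has large gaps that coincide with the genuine open difficulty of the conjecture.
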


Conjecture  \ref{conk-09} has been verified   for $n$ up to $9$. Note that this conjecture is equivalent to saying that  the permutations in $S_n$ whose Newton polytopes have the largest number of lattice points are layered permutations.
In Table \ref{tab:my_label-0o}, we list the values of  $\alpha_n$ and $\beta_n$ for $n\leq 9$, together with the permutations achieving these maximum values. 
\begin{table}[h t]
    \centering
    \begin{tabular}{c|c|c|c|c}
       $n$& $\alpha_n$ & \text{permutations attaining $\alpha_n$} & $\beta_n$ & \text{permutations attaining $\beta_n$}  \\
       \hline
        1 & 1 & 1 & 1 & 1\\
        2 & 1 & 12, 21 & 1 & 12, 21\\
        3 & 2 & 132   & 2 & 132\\
        4 & 5 & 1432  & 5 & 1432\\
        5 & 14 & 12543, 15432, 21543 & 14 & 15432\\
        6 & 84 & 126543, 216543 & 65 & 126543, 216543\\
        7 & 660 & 1327654 & 347 & 1276543, 2176543\\
        8 & 9438 & 13287654 & 2151 &13287654\\
        9 & 163592 & 132987654 & 17319 & 132987654\\
    \end{tabular}
    \caption{The values of $\alpha_n$ and $\beta_n$ for $n\leq 9$.}
    \label{tab:my_label-0o}
\end{table}
From the table, we see that for each $n=1,2,\ldots, 9$ expect for $n=7$, there   exists (at least) one common layered permutation   which reaches both $\alpha_n$ and $\beta_n$. 

\begin{prob}
For $n$ large, does there always exist  a (layered) permutation in $S_n$ that achieves the maximum values $\alpha_n$ and $\beta_n$ simultaneously.    
\end{prob}

The asymptotic behavior of $\alpha_n$ was first  sought by 
Stanley \cite{stanley}.

\begin{conj}[Stanely \cite{stanley}]\label{Sju8}
 There exists a limit 
 \[
 \lim_{n \rightarrow \infty} \frac{\log \alpha_n}{n^2}.
 \]
\end{conj}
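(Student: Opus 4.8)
Since Conjecture~\ref{Sju8} is open, what follows is a program --- a plan whose final step is a substantial analytic problem --- rather than a complete proof. Write $f(n)=\log\alpha_n$. The first thing to nail down is the order of growth: $f(n)=\Theta(n^2)$, as shown by Stanley \cite{stanley}. The upper bound is immediate from the pipe-dream model, since $\nu_w$ equals the number of reduced pipe dreams of $w$, each of which is a subset of the staircase with $\binom{n}{2}$ cells, so $\nu_w\le 2^{\binom{n}{2}}$; the lower bound comes from an explicit family of layered permutations of ``self-similar staircase'' type, of which the extremal examples in Table~\ref{tab:my_label-0o} are small instances. I would note in passing that this double-exponential rate is precisely why the pattern-count estimates of Theorem~\ref{schres} --- being only polynomial in $n$ --- cannot by themselves settle the conjecture; a multiplicative input is needed. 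One such input is superadditivity, $f(m+n)\ge f(m)+f(n)$, coming from $\nu_{u\oplus v}\ge\nu_u\nu_v$ for a direct sum $u\oplus v$ of $u\in S_m$ and $v\in S_n$ (and likewise $\theta_{u\oplus v}\ge\theta_u\theta_v$, so $\beta_{m+n}\ge\beta_m\beta_n$). By Fekete's lemma $f(n)/n$ converges; but since $f(n)=\Theta(n^2)$ this limit is $+\infty$, so superadditivity carries no information at the quadratic scale. The absence of any exact multiplicativity at the correct order is the crux of the difficulty.

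The plan is to follow the variational route. First, reduce to layered permutations: granting Conjecture~\ref{conk-09} of Merzon--Smirnov --- or merely the weaker statement that $\log L_n=f(n)+o(n^2)$, where $L_n=\max\{\nu_w:\,w\in S_n\ \text{layered}\}$ --- it suffices to analyze $w=w(b_1,\dots,b_k)$. For such $w$, $\nu_w$ admits an explicit combinatorial description (determinantal, or via a transfer matrix) depending only on the composition $(b_1,\dots,b_k)$ of $n$. Next, rescale: encode $(b_1,\dots,b_k)$ as a profile $\rho$ on $[0,1]$ --- a non-increasing step function, equivalently a probability measure --- and prove that $\tfrac{1}{n^2}\log\nu_{w(b_1,\dots,b_k)}$ converges, uniformly over profiles, to an explicit concave functional $\mathcal{F}(\rho)$. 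Finally, a compactness and upper-semicontinuity argument transfers the discrete maximum to the continuum supremum, yielding
\[
\lim_{n\to\infty}\frac{\log\alpha_n}{n^2}=\sup_{\rho}\mathcal{F}(\rho),
\]
the supremum being attained at a (presumably unique) optimal profile.

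The weight of the argument falls on two places, and I expect the second to be the real obstacle. The reduction to layered permutations rests on Conjecture~\ref{conk-09} (or its $o(n^2)$ weakening), which is itself open, and a direct Fekete-type attack on general $w$ appears blocked because the only visible supermultiplicativity lives at the linear scale. The genuinely hard analytic core is the asymptotic evaluation: one needs \emph{matching} upper and lower bounds, sharp to $o(n^2)$ in the exponent, for the exponentially large determinants or sums computing $\nu_{w(b_1,\dots,b_k)}$, \emph{uniformly} over all layered shapes --- a large-deviations/variational estimate in which the upper bounds are typically far more delicate than the lower bounds. Essentially all of the difficulty should concentrate there.
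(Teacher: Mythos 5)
The statement you were asked to prove is not a theorem of this paper but an open conjecture (attributed to Stanley), and the paper offers no proof of it. There is therefore no ``paper's own proof'' to compare against. You are right to recognize this and to present a program rather than claim a proof; that is the honest and correct framing.

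Your program is, moreover, well aligned with the state of the art that the paper itself cites immediately after stating the conjecture. The paper notes that Morales, Pak, and Panova \cite{MPP-1} proved exactly the restricted-to-layered version of the limit: $\lim_{n\to\infty} \log_2\gamma_n/n^2 \approx 0.2932\ldots$, where $\gamma_n$ is the maximum of $\nu_w$ over layered $w \in S_n$. That is precisely the ``rescale to a profile, converge to a concave functional, optimize'' step you describe, and it has already been carried out. What remains open is the reduction step: the Merzon--Smirnov conjecture (Conjecture~\ref{conk-09} is the $\theta_w$ analogue; the $\nu_w$ version is cited earlier in the section), or the weaker $o(n^2)$ version you state. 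Your diagnosis that the reduction to layered permutations is the crux is accurate. Two small quibbles: (i) you attribute $\log\alpha_n = \Theta(n^2)$ to Stanley, but you should double-check; the sharp lower bound of the right order is essentially the MPP computation, and I am not certain the attribution to \cite{stanley} alone is right. (ii) The supermultiplicativity $\nu_{u\oplus v} \ge \nu_u\nu_v$ deserves a word of justification --- for a direct sum one has $\mathfrak{S}_{u\oplus v} = \mathfrak{S}_u\cdot\mathfrak{S}_{1^m\times v}$, and $\nu_{1^m\times v}\ge\nu_v$ requires an argument (it follows from the pipe-dream model since shifting a pipe dream of $v$ down by $m$ rows embeds it in the larger staircase), so the inequality does hold, but it is not ``immediate.'' Neither issue changes your conclusion that this superadditivity lives at the wrong (linear) scale. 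In short: your assessment of the problem is sound, the hard analytic core you identify (uniform $o(n^2)$-sharp asymptotics) has been done in the layered case by \cite{MPP-1}, and the missing piece is the reduction to layered permutations, which is itself open.
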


Morales,  Pak  and   Panova \cite{MPP-1} showed that there is a limit when restricted to layered permutations. That is, letting 
\[
\gamma_n=\max\{\nu_w\colon \text{$w$ are layered permutations in $S_n$}\},
\]
there exists a limit
\[
 \lim_{n \rightarrow \infty} \frac{\log_2 \gamma_n}{n^2}\approx 0.2932362762.
 \]
Zhang \cite{Zhang-1} recently considered the asymptotic property  for the largest value of 
$\S_w(x)|_{x_i={q}^{i-1}}$  with $q=-1$ for  multi-layered permutations.

\begin{prob}
Does there exist an asymptotic behavior for $\beta_n$ similar to Conjecture 
    \ref{Sju8}.
\end{prob}

\subsection{A positivity conjecture }

For $w\in S_n$,  we may write 
\[
\nu_w=1+\sum_{u\in S_m\atop
m\leq n} c_u\, p_{u}(w),
\]
where the coefficients $c_u$ for $u\in S_m$ are   determined recursively  by 
\[
c_u=\nu_u-1-\sum_{\sigma\in S_{\ell}\atop
\ell<m} c_\sigma\, p_{\sigma}(u).
\]

As observed by Gao \cite[Lemma 3.1]{gao}, the coefficients $c_u$ own   the stability property, that is, $c_u=0$ for $u\in S_m$ with $u(m)=m$. The following appealing  conjecture appears as   \cite[Conjecture 3.2]{gao}.

\begin{conj}[{Gao \cite{gao}}]
For any permutation $u$, we have $c_u\in \mathbb{Z}_{\geq 0}$.    
\end{conj}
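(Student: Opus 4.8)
The plan is to prove Gao's conjecture by reducing it to the construction of a family of \emph{sharp} nonnegative pattern-count lower bounds for $\nu_w$. Concretely, I claim it suffices to show: for every $N\geq 1$ there exist integers $b_u\geq 0$, indexed by permutations $u\in S_m$ with $m\leq N$, such that
\begin{equation*}
\nu_w\ \geq\ 1+\sum_{m\leq N}\ \sum_{u\in S_m} b_u\, p_u(w)\qquad\text{for all }w,
\end{equation*}
with equality whenever $w\in S_m$ for some $m\leq N$. Indeed, given such $b_u$, the equality cases applied to all $w\in S_m$ with $m\leq N$, together with the triangularity of the matrix $\bigl(p_\sigma(\tau)\bigr)_{\sigma,\tau\in \bigcup_{m\leq N}S_m}$ (one has $p_\sigma(\sigma)=1$ and $p_\sigma(\tau)=0$ unless $\sigma$ is a pattern of $\tau$), force $b_\sigma=c_\sigma$ for every $|\sigma|\leq N$; hence $c_\sigma=b_\sigma\geq 0$. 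Conversely, once the conjecture is known the choice $b_u=c_u$ works, since then the discarded tail $\sum_{m>N}\sum_{u\in S_m}c_u\,p_u(w)$ is nonnegative and vanishes on $S_{\leq N}$. Thus the conjecture is \emph{equivalent} to producing such sharp nonnegative bounds at every truncation level $N$, and this is the statement I would attack.

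To build these bounds I would generalize the injective constructions of Weigandt and Gao. Recall that $\nu_w$ equals the number of reduced pipe dreams (RC-graphs) of $w$, and that this set has a distinguished ``bottom'' element $D_{\mathrm{bot}}(w)$, the unique one exactly when $w$ is dominant. Weigandt injects $P_{132}(w)$ into the set of non-bottom pipe dreams, and Gao extends this to an injection of $P_{132}(w)\sqcup P_{1432}(w)$; the goal is a \emph{complete} hierarchical decomposition, i.e.\ an explicit family of finite sets $\mathcal{C}_u$ and a bijection, natural in $w$,
\begin{equation*}
\{\text{reduced pipe dreams of }w\}\setminus\{D_{\mathrm{bot}}(w)\}\ \xrightarrow{\ \sim\ }\ \bigsqcup_{u}\ \bigl(P_u(w)\times\mathcal{C}_u\bigr),
\end{equation*}
in which an occurrence of $u$ controls $|\mathcal{C}_u|=c_u$ pipe dreams ``created at level $u$.'' Truncating at patterns of size $\leq N$ (the contribution of $|u|>N$ being empty when $w\in S_{\leq N}$) then yields exactly the sharp bounds above, with $b_u=|\mathcal{C}_u|$. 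The design principle I would follow is the bookkeeping of Theorem~\ref{res1}: there, each occurrence of a small sub-configuration of the Rothe diagram produces, in a controlled way, a diagram whose weight vector encodes (and lets one recover) the generating configuration, which is what makes Algorithms 1--3 injective with pairwise distinct images. I would stratify pipe-dream ``defects'' by a complexity statistic so that the stratum of complexity matching $|u|$ accounts for precisely the new $c_u$ pipe dreams, using the easily-checked symmetry $c_u=c_{u^{-1}}$ (from $\nu_w=\nu_{w^{-1}}$ via transposition of pipe dreams, plus $p_u(w^{-1})=p_{u^{-1}}(w)$) and Gao's stability $c_u=0$ for $u(m)=m$ both as consistency checks and to prune cases.

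The main obstacle is that there is presently \emph{no} conjectural combinatorial description of the sets $\mathcal{C}_u$ — equivalently, of the integers $c_u$ themselves — so the ``completeness'' (surjectivity) half of the desired bijection is the genuine crux, not a formality: partial injections in the style of Weigandt and Gao only ever give lower bounds and never pin down $c_u$. A realistic first milestone would be the case $N=5$: prove that the bound of Theorem~\ref{schres}, upgraded from $\theta_w$ to $\nu_w$ and sharpened to be exact on $S_{\leq5}$, has coefficients equal to the true $c_u$ for $|u|\leq5$, which already forces one to understand the first nontrivial values $c_u$ combinatorially. An alternative worth pursuing in parallel is Lascoux's transition recursion, which writes $\mathfrak{S}_w$, hence $\nu_w$, as a positive sum of $\nu_{u_i}$ over a tree terminating at dominant permutations (where $\nu=1$ and all relevant $c_u$ vanish); one would try to track how pattern occurrences of $w$ relate to those of its transition children so that nonnegativity of the $c_u$ propagates down the tree — here the difficulty migrates to controlling pattern containment along a single transition step.
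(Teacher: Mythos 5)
The statement you are addressing is an open conjecture (Gao's Conjecture~3.2 in \cite{gao}); the paper offers no proof of it, only the remark that it has been verified in special pattern-avoidance classes by M{\'e}sz{\'a}ros--Tanjaya and Dennin. Your submission is likewise not a proof but a research plan, and you say as much. The one piece of actual mathematics in it --- the equivalence between the conjecture and the existence, for every $N$, of a nonnegative pattern-count lower bound for $\nu_w$ that is exact on all $w\in S_{\leq N}$ --- is correct: unitriangularity of $\bigl(p_\sigma(\tau)\bigr)$ with respect to length, together with $p_\sigma(\tau)=\delta_{\sigma\tau}$ for $|\sigma|=|\tau|$, does force $b_u=c_u$ for $|u|\leq N$. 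But this reformulation is essentially tautological; producing such sharp bounds is exactly as hard as proving $c_u\geq 0$ for $|u|\leq N$, so no ground is gained.

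The genuine gap is the one you yourself flag: the surjectivity half of the proposed bijection
\[
\{\text{reduced pipe dreams of }w\}\setminus\{D_{\mathrm{bot}}(w)\}\ \xrightarrow{\ \sim\ }\ \bigsqcup_{u}\bigl(P_u(w)\times\mathcal{C}_u\bigr)
\]
\emph{is} the conjecture, and you give no candidate for the sets $\mathcal{C}_u$, no complexity statistic, and no argument that the stratification you envision exists. The injective constructions of Weigandt and Gao, and the Algorithms 1--3 of this paper, only ever produce lower bounds; nothing in that style can certify that every non-bottom pipe dream has been accounted for, which is what exactness requires. Note also that the analogue for $\theta_w$ behaves less tamely than you might hope: the paper records that $d_u\leq c_u$ already fails at $n=8$ ($u=13452786$), a warning that naive transfers of structure between $\nu_w$ and $\theta_w$ break down. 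The transition-recursion route you mention in closing is a reasonable alternative direction, but as you note the difficulty simply migrates to controlling pattern containment across a transition step, and you do not address it. In short: the reduction is sound but vacuous, and the core construction is absent, so this does not constitute a proof.
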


The above conjecture  has been confirmed for permutations avoiding both 1432 and 1423 patterns by  M{\'e}sz{\'a}ros  and  Tanjaya \cite{MT-211}, and for permutations avoiding 1243 patterns by Dennin \cite{Denn}.

Similarly, for $w\in S_n$, one may express 
\[
\theta_w=1+\sum_{u} d_u\, p_{u}(w).
\]
The coefficients $d_u$ can be similarly computed in a recursive procedure:
\[
d_u=\theta_u-1-\sum_{\sigma\in S_\ell\atop
\ell<m} d_\sigma\, p_{\sigma}(u).
\]
Imitating the arguments  in the proof of \cite[Lemma 3.1]{gao}, we can show  that $d_u=0$ for $u\in S_m$ with $u(m)=m$.

\begin{conj}\label{locond-3}
For any permutation $u$, we have $d_u\in \mathbb{Z}_{\geq 0}$.    
\end{conj}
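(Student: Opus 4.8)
The plan is to strengthen the mechanism behind Theorem~\ref{schres} from a one-sided estimate into an \emph{exact} expansion of $\theta_w$ in pattern counts with transparently nonnegative coefficients. By Proposition~\ref{gapnj-1} together with \cite[Lemma~18]{meszaros2021principal}, one has $\theta_w=1+|\Theta(w)|$, where $\Theta(w)=\{\mathrm{wt}(C)\colon C\le D(w),\ C\neq D(w)\}$ and the summand $1$ records the distinguished support $\mathrm{wt}(D(w))$, the code of $w$. Since the $d_u$ are uniquely determined by their defining recursion, Conjecture~\ref{locond-3} is equivalent to producing, for every permutation $u$, a nonnegative integer $\tilde d_u$ depending only on $u$, together with a rule that assigns to each occurrence of $u$ in $w$ a family of $\tilde d_u$ elements of $\Theta(w)$ so that the resulting assignment on (occurrence, label) pairs is both injective and surjective onto $\Theta(w)$; then $|\Theta(w)|=\sum_u\tilde d_u\,p_u(w)$ for all $w$, and uniqueness of the expansion forces $d_u=\tilde d_u\ge 0$. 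Injectivity --- the ``distinct weight vectors'' part --- is exactly the bookkeeping already performed in Section~\ref{Finj} for the configurations in Figure~\ref{config}; surjectivity is what the Remark following the proof of Theorem~\ref{0ing-1} points out is missing.

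Concretely I would first try to attach to each $\beta\in\Theta(w)$ a canonical sub-diagram $C_\beta\le D(w)$ with $\mathrm{wt}(C_\beta)=\beta$ --- for instance a suitable top-justification of the boxes column by column, in the spirit of the Step~0 operations in Algorithms~2 and~3 --- and then attach to $C_\beta$ a pattern occurrence of $w$ by recording the rows and the columns in which $C_\beta$ differs from $D(w)$, together with the preimages under $w^{-1}$ of those columns, along the lines of the three cases listed in the proof of Theorem~\ref{0ing-1}. The substance is to prove that this is well defined for \emph{every} $\beta$, that the touched rows and columns always form the positions of a genuine pattern occurrence, and that the number of $\beta$ sent to a given occurrence of a given $u$ is a constant $\tilde d_u$ not depending on $w$; the last point is believable because, once the column normalization has been applied, the only remaining data is the relative arrangement of the dots of $w$ inside the chosen rows and columns, which is precisely $u$. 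Carrying this through would at the same time compute all the $d_u$, generalizing the coefficients $1,1,1,3,1,4,1,2,1,1,2,1$ appearing in Theorem~\ref{schres}.

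The hardest step will be surjectivity, and within it the $w$-independence of the fibers. One has to exclude ``accidental'' weight vectors that no single pattern occurrence forces --- the diagram $D(162435)$ in the Remark already contains a sub-configuration of a shape in Figure~\ref{config} that none of the listed patterns produces, so the repertoire of configurations, and the attendant classification of which normalized sub-diagrams count as ``minimal'', will have to be enlarged before completeness is even possible --- and one has to guarantee that a weight vector arising from two distinct occurrences always admits a canonical tie-break. I expect this will require a genuinely global handle on the full lattice-point set of the Schubitope, i.e.\ the Newton polytope of $\mathfrak{S}_w$ --- for example via the fact that it is a generalized permutohedron together with an understanding of how its Minkowski (simplex) coordinates read off the dot configuration of $w$ --- rather than the local box-pushing of Section~\ref{Finj}; organizing that data uniformly in $u$ and verifying that the fiber sizes are $w$-independent is the crux. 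A more modest first target, in parallel with the known partial cases of Gao's companion conjecture due to M{\'e}sz{\'a}ros--Tanjaya \cite{MT-211} and Dennin \cite{Denn}, would be to settle $d_u\ge 0$ for all $w$ avoiding a fixed small pattern, say both $1432$ and $1423$: in such classes the normalized sub-diagrams are rigid enough that the box-pushing picture ought to be completable.
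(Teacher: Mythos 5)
This statement is labeled a \emph{conjecture} in the paper, not a theorem: the authors do not prove it, they only report computer verification for $n\le 8$ and note some structural observations (stability $d_u=0$ when $u(m)=m$, comparison with Gao's coefficients $c_u$, etc.). So there is no ``paper's proof'' to compare against.

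Your submission is not a proof either --- it is an outline of a research program, and you say as much. The framing is sound as far as it goes: reducing nonnegativity of $d_u$ to the existence of an exact pattern-indexed decomposition of $\Theta(w)=\{\mathrm{wt}(C):C\le D(w),\ C\ne D(w)\}$ with fiber sizes $\tilde d_u$ depending only on $u$ is correct, since any expansion $\theta_w=1+\sum_u\tilde d_u\,p_u(w)$ valid for all $w$ with the stability property forces $\tilde d_u=d_u$ by the recursion. (You should say explicitly that your construction would have to respect stability, i.e.\ assign nothing to occurrences of $u$ with $u(m)=m$, for the uniqueness argument to close; this is plausible from your description but not stated.) Your recognition that Section~\ref{Finj} gives injectivity but not surjectivity, and your use of the paper's own $D(162435)$ remark to show that the repertoire of configurations in Figure~\ref{config} must be enlarged before surjectivity is even possible, are both accurate.

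The genuine gap is exactly the step you flag as the crux: you give no construction of the canonical $C_\beta$, no proof that the ``touched'' rows and columns always encode a single pattern occurrence, and no argument that the fiber size over an occurrence of $u$ is a $w$-independent constant. These are not routine; in particular the Minkowski/generalized-permutohedron idea is only gestured at, and it is not clear that it localizes to pattern occurrences at all. As written this cannot be credited as a proof of Conjecture~\ref{locond-3}; it is a reasonable statement of what a proof might look like, together with an honest acknowledgment of where the difficulty lies. The suggestion to first handle restricted pattern classes (mirroring \cite{MT-211} and \cite{Denn} for $c_u$) is a sensible incremental target and would be genuine progress if carried out.
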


Conjecture \ref{locond-3} has been verified for $n$ up to $8$. The data imply that 
\begin{itemize}
    \item when $1\leq n\leq 5$, we have
$0\leq d_u\leq c_u$, and    $d_u>0$ if and only if $c_u>0$;

\item when $n=6,7$, we still have $0\leq d_u\leq c_u$. But there exist two permutations in $S_6$, 
$u^1=136245$ and $u^2=146235$, such that 
$d_{u^1}=d_{u^2}=0$ but $c_{u^1}=c_{u^2}=1$;

\item 
when $n=8$, we no  longer have $0\leq d_u\leq c_u$. The only exception is   $u=13452786$ for which  $c_u=3$  and  $d_u=4$.
\end{itemize}
In Table \ref{tab-mdu}, 
we list the  values of $c_u>0$ (or  $d_u> 0$) for permutations $u\in S_m$ with  $m\leq 5$, where the  permutations  appearing in the lower bound in Theorem  \ref{schres} are underlined.

\begin{table}[h t]
    \centering
\begin{tabular}{ccc|ccc}

  permutation & $c_u$ & $d_u$&permutation & $c_u$ & $d_u$ \\  
\hline
\underline{132} & 1 & 1 & \underline{1432} & 1 & 1\\
12453 & 1 & 1 & \underline{15342} & 2 & 2\\  
12534 & 1 & 1 & 15423 & 1 & 1\\  
12543 & 5 & 4 & \underline{15432} & 3 & 3\\  
\underline{13254} & 3 & 2 & 21453 & 1 & 1\\
13524 & 3 & 2 & 21534 & 1 & 1\\
13542 & 4 & 3 & 21543 & 5 & 4\\
\underline{14253} & 3 & 3 & \underline{24153} & 1 & 1\\
\underline{14352} & 1 & 1 & \underline{25143} & 2 & 2 \\
14523 & 1 & 1 & 31524 & 1 & 1\\
14532 & 1 & 1 & 31542 & 2 & 2\\
\underline{15243} & 4 & 4 & \underline{35142} & 1 & 1\\
\underline{15324} & 1 & 1 \\
\end{tabular}    
    \caption{Permutations in $S_m$ for $m\leq 5$ with nonzero  values of $c_u$ and $d_u$.}
    \label{tab-mdu}
\end{table}

From Table \ref{tab-mdu}, we see that there are 13 permutations  in $S_5$ that do not  appear in the lower bound in  Theorem   \ref{schres}. Among the 12 permutations appearing  in the lower bound, the coefficients for $p_{13254}(w)$ and $p_{15432}(w)$ are not optimal if assuming Conjecture \ref{locond-3}. New algorithms or tools are needed to explore for  further improving  the lower bound established in 
 Theorem   \ref{schres}.

\begin{prob}
Strengthen the lower bound in   Theorem   \ref{schres}.  
\end{prob}

As an attempt to enhance the bound, it would be interesting to establish a bound for $\theta_w$ encompassing   all $p_u(w)$ for $u$ being  permutations listed   in  Table \ref{tab-mdu}.

\newpage
\section{Appendix}\label{app-21}

\begin{table}[!h]
\centering
        \begin{tabular}{c|cccc}
 patterns& subdiagrams & & & \\  
\hline
$    \BPD[1.2pc]{
    \N{}\\
    \T\H\H\H\H\\
    \I\gg\T\H\H\\
    \I\T\X\H\H\\
    \I\I\I\gg\T\\
    \I\I\I\T\X\\
    \N{}\N{}\N{\text{\small 13254 pattern}}}$&$    \BPD[1.2pc]{
    \N{}\\
    \T\HC\H\H\H\\
    \I\ggC\T\H\H\\
    \I\T\X\HC\H\\
    \I\I\I\ggC\T\\
    \I\I\I\T\X\\
    \N{}\N{}\N{{\text{\footnotesize (C)}}}}$ \\
    \hline
    $    \BPD[1.2pc]{
    \N{}\\
    \T\H\H\H\H\\
    \I\gg\gg\T\H\\
    \I\T\H\X\H\\
    \I\I\gg\I\T\\
    \I\I\T\X\X\\    
     \N{}\N{}\N{\text{\small 14253 pattern}}}$&$\BPD[1.2pc]{
    \N{}\\
    \T\HC\HC\H\H\\
    \I\ggC\ggC\T\H\\
    \I\T\H\X\H\\
    \I\IC\ggC\I\T\\
    \I\I\T\X\X\\    
    \N{}\N{}\N{{\text{\footnotesize (B')}}}}$&$\BPD[1.2pc]{
    \N{}\\
    \T\HC\H\H\H\\
    \I\ggC\gg\T\H\\
    \I\T\HC\X\H\\
    \I\I\ggC\I\T\\
    \I\I\T\X\X\\    
    \N{}\N{}\N{{\text{\footnotesize (C)}}}}$&$\BPD[1.2pc]{
    \N{}\\
    \T\H\HC\H\H\\
    \I\gg\ggC\T\H\\
    \I\T\HC\X\H\\
    \I\I\ggC\I\T\\
    \I\I\T\X\X\\    
    \N{}\N{}\N{{\text{\footnotesize (C'')}}}}$\\
    \hline
     $\BPD[1.2pc]{\N{}\\
    \T\H\H\H\H\\
    \I\gg\gg\T\H\\
    \I\gg\T\X\H\\
    \I\gg\I\I\T\\
    \I\T\X\X\X\\
     \N{}\N{}\N{\text{\small 14352 pattern}}}$&
     $\BPD[1.2pc]{\N{}\\
    \T\HC\HC\H\H\\
    \I\ggC\ggC\T\H\\
    \I\gg\T\X\H\\
    \I\ggC\IC\I\T\\
    \I\T\X\X\X\\
    \N{}\N{}\N{{\text{\footnotesize (B)}}}}$\\
    \hline
    $\BPD[1.2pc]{\N{}\\
    \T\H\H\H\H\\
    \I\gg\gg\gg\T\\
    \I\T\H\H\X\\
    \I\I\gg\T\X\\
    \I\I\T\X\X\\
     \N{}\N{}\N{\text{\small 15243   pattern}}}$&
        $\BPD[1.2pc]{\N{}\\
    \T\HC\HC\H\H\\
    \I\ggC\ggC\gg\T\\
    \I\T\H\H\X\\
    \I\IC\ggC\T\X\\
    \I\I\T\X\X\\
    \N{}\N{}\N{{\text{\footnotesize (B')}}}}$&
        $\BPD[1.2pc]{\N{}\\
    \T\HC\H\H\H\\
    \I\ggC\gg\gg\T\\
    \I\T\HC\H\X\\
    \I\I\ggC\T\X\\
    \I\I\T\X\X\\
    \N{}\N{}\N{{\text{\footnotesize (C)}}}}$&
        $\BPD[1.2pc]{\N{}\\
    \T\H\H\HC\H\\
    \I\gg\gg\ggC\T\\
    \I\T\HC\H\X\\
    \I\I\ggC\T\X\\
    \I\I\T\X\X\\
    \N{}\N{}\N{{\text{\footnotesize (C')}}}}$&
        $\BPD[1.2pc]{\N{}\\
    \T\H\HC\H\H\\
    \I\gg\ggC\gg\T\\
    \I\T\HC\H\X\\
    \I\I\ggC\T\X\\
    \I\I\T\X\X\\
    \N{}\N{}\N{{\text{\footnotesize (C'')}}}}$\\
    \hline
    $\BPD[1.2pc]{\N{}\\
    \T\H\H\H\H\\
    \I\gg\gg\gg\T\\
    \I\gg\T\H\X\\
    \I\T\X\H\X\\
    \I\I\I\T\X\\
     \N{}\N{}\N{\text{\small 15324 pattern}}}$&
        $\BPD[1.2pc]{\N{}\\
    \T\HC\H\HC\H\\
    \I\ggC\gg\ggC\T\\
    \I\ggC\T\HC\X\\
    \I\T\X\H\X\\
    \I\I\I\T\X\\
    \N{}\N{}\N{{\text{\footnotesize (B)}}}}$\\

    \end{tabular}

     \caption{Subdiagrams in the proof of Theorem \ref{schres}  generated  by $u$ patterns with $u\in S_5$.}
     \label{table-09}
 \end{table}
 \newpage
\begin{table}[!h]
    \centering

  \begin{tabular}{c|cccc}
 patterns& subdiagrams & & & \\  
    \hline
    
    $    \BPD[1.2pc]{
    \N{}\\
    \T\H\H\H\H\\
    \I\gg\gg\gg\T\\
    \I\gg\T\H\X\\
    \I\gg\I\T\X\\
    \I\T\X\X\X\\
     \N{}\N{}\N{\text{\small 15342 pattern}}}$&
    $    \BPD[1.2pc]{
    \N{}\\
    \T\HC\HC\H\H\\
    \I\ggC\ggC\gg\T\\
    \I\gg\T\H\X\\
    \I\ggC\IC\T\X\\
    \I\T\X\X\X\\
    \N{}\N{}\N{{\text{\footnotesize (B)}}}}$&
    $    \BPD[1.2pc]{
    \N{}\\
    \T\HC\H\HC\H\\
    \I\ggC\gg\ggC\T\\
    \I\ggC\T\HC\X\\
    \I\gg\I\T\X\\
    \I\T\X\X\X\\
    \N{}\N{}\N{{\text{\footnotesize (B)}}}}$\\
   \hline

$
    \BPD[1.2pc]{
    \N{}\\
    \T\H\H\H\H\\
    \I\gg\gg\gg\T\\
    \I\gg\gg\T\X\\
    \I\gg\T\X\X\\
    \I\T\X\X\X\\
     \N{}\N{}\N{\text{\small 15432 pattern}}}
    $&$
    \BPD[1.2pc]{
    \N{}\\
    \T\HC\H\HC\H\\
    \I\ggC\gg\ggC\T\\
    \I\gg\gg\T\X\\
    \I\ggC\T\XC\X\\
    \I\T\X\X\X\\
    \N{}\N{}\N{{\text{\footnotesize (B)}}}}
    $\\
    \hline
$ \BPD[1.2pc]{
    \N{}\\
    \gg\T\H\H\H\\
    \gg\I\gg\T\H\\
    \T\X\H\X\H\\
    \I\I\gg\I\T\\
    \I\I\T\X\X\\
     \N{}\N{}\N{\text{\small 24153 pattern}}}   $&$ \BPD[1.2pc]{
    \N{}\\
    \gg\T\HC\H\H\\
    \gg\I\ggC\T\H\\
    \T\X\HC\X\H\\
    \I\I\ggC\I\T\\
    \I\I\T\X\X\\
    \N{}\N{}\N{{\text{\footnotesize (C'')}}}}   $\\
    \hline
        $\BPD[1.2pc]{
    \N{}\\
    \gg\T\H\H\H\\
    \gg\I\gg\gg\T\\
    \T\X\H\H\X\\
    \I\I\gg\T\X\\
    \I\I\T\X\X\\
     \N{}\N{}\N{\text{\small 25143 pattern}}}
    $&$\BPD[1.2pc]{
    \N{}\\
    \gg\T\H\HC\H\\
    \gg\I\gg\ggC\T\\
    \T\X\HC\H\X\\
    \I\I\ggC\T\X\\
    \I\I\T\X\X\\
    \N{}\N{}\N{{\text{\footnotesize (C')}}}}
    $&
    $\BPD[1.2pc]{
    \N{}\\
    \gg\T\HC\H\H\\
    \gg\I\ggC\gg\T\\
    \T\X\HC\H\X\\
    \I\I\ggC\T\X\\
    \I\I\T\X\X\\
    \N{}\N{}\N{{\text{\footnotesize (C'')}}}}
    $\\
    \hline
      $\BPD[1.2pc]{
    \N{}\\
    \gg\gg\T\H\H\\
    \gg\gg\I\gg\T\\
    \T\H\X\H\X\\
    \I\gg\I\T\X\\
    \I\T\X\X\X\\
     \N{}\N{}\N{\text{\small 35142 pattern}}}$&
      $\BPD[1.2pc]{
    \N{}\\
    \gg\gg\T\HC\H\\
    \gg\gg\I\ggC\T\\
    \T\HC\X\H\X\\
    \I\ggC\I\T\X\\
    \I\T\X\X\X\\
    \N{}\N{}\N{{\text{\footnotesize (C')}}}}$&$\BPD[1.2pc]{
    \N{}\N{}\N{}\N{}\N{}}$&$\BPD[1.2pc]{
    \N{}\N{}\N{}\N{}\N{}}$&$\BPD[1.2pc]{
    \N{}\N{}\N{}\N{}\N{}}$
\end{tabular}
    
    \caption{Subdiagrams in the proof of Theorem \ref{schres} generated  by $u$ patterns with $u\in S_5$ (continued).}
    \label{tab:my_label-98}
\end{table}

\footnotesize{

\textsc{(Peter L. Guo) Center for Combinatorics, Nankai University, LPMC, Tianjin 300071, P.R. China}

{\it
Email address: \tt lguo@nankai.edu.cn}

\medbreak

\textsc{(Zhuowei Lin) Center for Combinatorics, Nankai University, LPMC, Tianjin 300071, P.R. China}

{\it
Email address: \tt zwlin0825@163.com}


\begin{thebibliography}{1}

\bibitem{ARY}
A. Adve,  C. Robichaux  and A. Yong,  An efficient algorithm for deciding vanishing of Schubert polynomial coefficients,  Adv. Math. 383 (2021), Paper No. 107669, 38 pp.


\bibitem{D-1} M. Demazure, D\'esingularisation des vari\'et\'es de Schubert g\'en\'eralis\'ees, Ann. Sci. \'Ecole Norm. Sup. 7 (1974), 53--88.

\bibitem{D-2}
M. Demazure, Une nouvelle formule des caract\'eres, Bull. Sci. Math. 98 (1974),
 163--172.

\bibitem{Denn}
H. Dennin,
 Pattern bounds for principal specializations of $\beta$-Grothendieck polynomials, 	arXiv:2206.10017, 2022. 

\bibitem{2018Schubert}
A. Fink, K. M{\'e}sz{\'a}ros and A. St. Dizier,
Schubert polynomials as integer point transforms of generalized permutahedra,
Adv. Math. 332 (2018), 465--475.

\bibitem{fink2021zero}
A. Fink, K. M{\'e}sz{\'a}ros and A. St. Dizier,
Zero-one Schubert polynomials,
Math. Z. 297 (2021),   1023--1042.

\bibitem{FS}
S. Fomin and R.P. Stanley, Schubert polynomials and the nil-Coxeter algebra, Adv. Math. 103 (1994),  196--207.

\bibitem{gao}
Y. Gao,
Principal specializations of Schubert polynomials and pattern containment,
European J. Combin. 94 (2021), Paper No. 103291, 12 pp.

\bibitem{GLP}
P.L. Guo, Z. Lin and  S.C.Y. Peng,
Zero-one dual characters of flagged Weyl modules,
arXiv:2411.10933v1, 2024.



\bibitem{HPSW}
Z. Hamaker,  O. Pechenik, D.E. Speyer and A. Weigandt,  Derivatives of Schubert polynomials and proof of a determinant conjecture of Stanley,  Algebraic  Combin. 3 (2020), 301--307.

\bibitem{KP-1}
W. Kra{\'s}kiewicz and P. Pragacz,
Foncteurs de schubert,
C. R. Acad. Sci. Paris Sér. I Math. 304 (1987),  209--211.

\bibitem{KP-2}
W. Kra{\'s}kiewicz and P. Pragacz,
Schubert functors and Schubert polynomials,
European J. Combin. 25 (2004),  1327--1344.

\bibitem{schubert}
A. Lascoux, M.-P. Sch\"utzenberger,
Polyn\^omes de Schubert,
C. R. Acad. Sci. Paris Sér. I Math. 294 (1982),   447--450.

\bibitem{Mac}
I.G. Macdonald, Notes on Schubert Polynomials, Laboratoire de combinatoire et d'informatique math\'ematique (LACIM), Universit\'e du Qu\'ebec \'a Montr\'eal, Montreal, 1991.

\bibitem{magyar1998schubert}
P. Magyar,
Schubert polynomials and Bott--Samelson varieties,
Comment. Math. Helv. 73 (1998), 603--636.

\bibitem{MS-1}
G. Merzon and E. Smirnov, Determinantal identities for flagged Schur and Schubert
 polynomials, European J. Math.  2 (2016), 227--245.
 

\bibitem{meszaros2021principal}
K. M{\'e}sz{\'a}ros, A. St. Dizier and A. Tanjaya, 
Principal specialization of dual characters of flagged Weyl modules,
Electron.  J. Combin. 28 (2021),   Paper No. 4.17, 12 pp.


\bibitem{MT-211}
K. M{\'e}sz{\'a}ros  and A. Tanjaya, 
 Inclusion-exclusion on Schubert polynomials,
 Algebraic Combin. 5  (2022),  209--226.

\bibitem{MTY}
C. Monical, N. Tokcan and A. Yong, Newton polytopes in algebraic combinatorics,
 Selecta Math. (N.S.) 25 (2019), no. 5, Paper No. 66.

\bibitem{MPP-1}
A.H. Morales, I. Pak  and G. Panova, Asymptotics of principal evaluations of
 Schubert polynomials for layered permutations, Proc. Amer. Math. Soc.  147 (2019), 1377--1389.

\bibitem{stanley}
R. Stanley,
Some Schubert shenanigans,
arXiv:1704.00851v2, 2017.

\bibitem{Anna}
A. Weigandt,
Schubert polynomials, 132-patterns, and Stanley's conjecture,
Algebr. Combin. 1 (2018), no.4, 415--423.

\bibitem{Zhang-1}
N. Zhang, 
Principal specializations of Schubert polynomials, multi-layered permutations and asymptotics, Adv. Appl. Math. 163 (2025),    102806, 19 pp.
\end{thebibliography}
\end{document}